\DeclareFontFamily{OT1}{ptm}{}
\DeclareFontShape{OT1}{ptm}{m}{n} { <-> ptmr}{}
\DeclareFontShape{OT1}{ptm}{m}{it}{ <-> ptmri}{}
\DeclareFontShape{OT1}{ptm}{m}{sl}{ <->ptmro}{}
\DeclareFontShape{OT1}{ptm}{m}{sc}{ <-> ptmrc}{}
\DeclareFontShape{OT1}{ptm}{b}{n} { <-> ptmb}{}
\DeclareFontShape{OT1}{ptm}{b}{it}{ <-> ptmbi}{}     
\DeclareFontShape{OT1}{ptm}{bx}{n} {<->ssub * ptm/b/n}{}
\DeclareFontShape{OT1}{ptm}{bx}{it}{<->ssub * ptm/b/it}{}
\DeclareSymbolFont{bold}{OT1}{ptm}{b}{n}
\DeclareMathAlphabet{\mathbf}{OT1}{ptm}{b}{n}  
\DeclareMathAlphabet{\mathrm}{OT1}{ptm}{m}{n}
\DeclareFontFamily{OT1}{psy}{}      
\DeclareFontShape{OT1}{psy}{m}{n}{ <-> s * [0.9] psyr}{}
\DeclareFontFamily{OMS}{ptm}{}     
\DeclareFontShape{OMS}{ptm}{m}{n}{ <8> <9> <10> gen * cmsy }{}
\DeclareFontFamily{OMS}{cmtt}{}     
\DeclareFontShape{OMS}{cmtt}{m}{n}{ <8> <9> <10> gen * cmsy }{}
\DeclareSymbolFont{emsy}{OT1}{ptm}{m}{it}
\DeclareSymbolFont{emsr}{OT1}{ptm}{m}{n}
\DeclareSymbolFont{emcmr}{OT1}{cmr}{m}{n}   
\DeclareSymbolFont{emsymb}{OT1}{psy}{m}{n}  
\DeclareMathSymbol a{\mathalpha}{emsy}{"61}
\DeclareMathSymbol b{\mathalpha}{emsy}{"62}
\DeclareMathSymbol c{\mathalpha}{emsy}{"63}
\DeclareMathSymbol d{\mathalpha}{emsy}{"64}
\DeclareMathSymbol e{\mathalpha}{emsy}{"65}
\DeclareMathSymbol f{\mathalpha}{emsy}{"66}
\DeclareMathSymbol g{\mathalpha}{emsy}{"67}
\DeclareMathSymbol h{\mathalpha}{emsy}{"68}
\DeclareMathSymbol i{\mathalpha}{emsy}{"69}
\DeclareMathSymbol j{\mathalpha}{emsy}{"6A}
\DeclareMathSymbol k{\mathalpha}{emsy}{"6B}
\DeclareMathSymbol l{\mathalpha}{emsy}{"6C}
\DeclareMathSymbol m{\mathalpha}{emsy}{"6D}
\DeclareMathSymbol n{\mathalpha}{emsy}{"6E}
\DeclareMathSymbol o{\mathalpha}{emsy}{"6F}
\DeclareMathSymbol p{\mathalpha}{emsy}{"70}
\DeclareMathSymbol q{\mathalpha}{emsy}{"71}
\DeclareMathSymbol r{\mathalpha}{emsy}{"72}
\DeclareMathSymbol s{\mathalpha}{emsy}{"73}
\DeclareMathSymbol t{\mathalpha}{emsy}{"74}
\DeclareMathSymbol u{\mathalpha}{emsy}{"75}
\DeclareMathSymbol v{\mathalpha}{emsy}{"76}
\DeclareMathSymbol w{\mathalpha}{emsy}{"77}
\DeclareMathSymbol x{\mathalpha}{emsy}{"78}
\DeclareMathSymbol y{\mathalpha}{emsy}{"79}
\DeclareMathSymbol z{\mathalpha}{emsy}{"7A}
\DeclareMathSymbol A{\mathalpha}{emsy}{"41}
\DeclareMathSymbol B{\mathalpha}{emsy}{"42}
\DeclareMathSymbol C{\mathalpha}{emsy}{"43}
\DeclareMathSymbol D{\mathalpha}{emsy}{"44}
\DeclareMathSymbol E{\mathalpha}{emsy}{"45}
\DeclareMathSymbol F{\mathalpha}{emsy}{"46}
\DeclareMathSymbol G{\mathalpha}{emsy}{"47}
\DeclareMathSymbol H{\mathalpha}{emsy}{"48}
\DeclareMathSymbol I{\mathalpha}{emsy}{"49}
\DeclareMathSymbol J{\mathalpha}{emsy}{"4A}
\DeclareMathSymbol K{\mathalpha}{emsy}{"4B}
\DeclareMathSymbol L{\mathalpha}{emsy}{"4C}
\DeclareMathSymbol M{\mathalpha}{emsy}{"4D}
\DeclareMathSymbol N{\mathalpha}{emsy}{"4E}
\DeclareMathSymbol O{\mathalpha}{emsy}{"4F}
\DeclareMathSymbol P{\mathalpha}{emsy}{"50}
\DeclareMathSymbol Q{\mathalpha}{emsy}{"51}
\DeclareMathSymbol R{\mathalpha}{emsy}{"52}
\DeclareMathSymbol S{\mathalpha}{emsy}{"53}
\DeclareMathSymbol T{\mathalpha}{emsy}{"54}
\DeclareMathSymbol U{\mathalpha}{emsy}{"55}
\DeclareMathSymbol V{\mathalpha}{emsy}{"56}
\DeclareMathSymbol W{\mathalpha}{emsy}{"57}
\DeclareMathSymbol X{\mathalpha}{emsy}{"58}
\DeclareMathSymbol Y{\mathalpha}{emsy}{"59}
\DeclareMathSymbol Z{\mathalpha}{emsy}{"5A}
\DeclareMathSymbol{\bullet}{\mathalpha}{emsymb}{"B7}
\DeclareMathSymbol{\regis}{\mathalpha}{emsymb}{"D2}
\def\Bullet{\leavevmode\unkern{$\m@th\bullet$}\kern.32em\ignorespaces}
\def\Regis{\leavevmode\raise.5ex\hbox{$\m@th\regis$}}
\DeclareMathSymbol +{\mathbin}{emcmr}{`+}
\DeclareMathSymbol ={\mathrel}{emcmr}{`=}  
\DeclareMathSymbol{\Gamma}{\mathalpha}{emcmr}{"00}
\DeclareMathSymbol{\Delta}{\mathalpha}{emcmr}{"01}
\DeclareMathSymbol{\Theta}{\mathalpha}{emcmr}{"02}
\DeclareMathSymbol{\Lambda}{\mathalpha}{emcmr}{"03}
\DeclareMathSymbol{\Xi}{\mathalpha}{emcmr}{"04}
\DeclareMathSymbol{\Pi}{\mathalpha}{emcmr}{"05}
\DeclareMathSymbol{\Sigma}{\mathalpha}{emcmr}{"06}
\DeclareMathSymbol{\Upsilon}{\mathalpha}{emcmr}{"07}
\DeclareMathSymbol{\Phi}{\mathalpha}{emcmr}{"08}
\DeclareMathSymbol{\Psi}{\mathalpha}{emcmr}{"09}
\DeclareMathSymbol{\Omega}{\mathalpha}{emcmr}{"0A}
\theoremstyle{plain}
\newtheorem{theorem}[subsection]{Theorem}
\newtheorem{proposition}[subsection]{Proposition}
\newtheorem{lemma}[subsection]{Lemma}
\newtheorem{corollary}[subsection]{Corollary}
\newtheorem*{transthm}{Transversality Theorem}
\theoremstyle{definition}
\newtheorem{definition}[subsection]{Definition}
\newtheorem{remark}[subsection]{Remark}
\newtheorem{remarks}[subsection]{Remarks}
\newtheorem{remcon}[subsection]{Remark/Conjecture}
\newtheorem{example}[subsection]{Example}
\newtheorem{noname}[subsection]{}
\newcommand{\co}{\colon \thinspace}
\newcommand{\R}{{\mathbb R}}
\newcommand{\Z}{{\mathbb Z}}
\newcommand{\la}{\lambda}
\newcommand{\abs}[1]{\left| {#1} \right|}
\renewcommand{\leq}{\leqslant}
\renewcommand{\geq}{\geqslant}
\renewcommand{\epsilon}{\varepsilon}
\DeclareMathOperator{\area}{Area}
\newcommand{\N}{{\mathbb N}}
\newcommand{\Fs}{{\mathscr F}}
\newcommand{\Fx}{{\mathscr F}_{\!x}}
\newcommand{\Fy}{{\mathscr F}_{\!y}}
\newcommand{\Fz}{{\mathscr F}_{\!z}}
\newcommand{\ellx}{\ell_{\! x}}
\newcommand{\elly}{\ell_{\! y}}
\renewcommand{\preceq}{\preccurlyeq}
\renewcommand{\succeq}{\succcurlyeq}
\newcommand{\IP}{{\rm{IP}}} 
\newcommand{\IPk}{{\rm{IP}}^{(k)}} 
\DeclareMathOperator{\FVol}{FVol}
\DeclareMathOperator{\Vol}{Vol}
\DeclareMathOperator{\RVol}{RVol}
\DeclareMathOperator{\RArea}{RArea}
\DeclareMathOperator{\id}{id}
\DeclareMathOperator{\interior}{int}
\DeclareMathOperator{\closure}{closure}
  \def\tagform@#1{\maketag@@@{%
   \textbf{(\ignorespaces#1\unskip\@@italiccorr)}}}%
   \renewcommand{\eqref}[1]{\textup{\maketag@@@{(\ignorespaces%
        {\ref{#1}}\unskip\@@italiccorr)}}}
\begin{document}

\title[Density of isoperimetric spectra]{Density of isoperimetric
spectra} 

\author{Noel Brady}
\address{Mathematics Department\\
        University of Oklahoma\\
        Norman, OK 73019\\
        USA}
\email{nbrady@math.ou.edu\\forester@math.ou.edu}

\author{Max Forester}

\begin{abstract} 
We show that the set of $k$-dimensional isoperimetric exponents of
finitely presented groups is dense in the interval $[1, \infty)$ for $k
\geq 2$. Hence there is no higher-dimensional analogue of Gromov's gap
$(1,2)$ in the isoperimetric spectrum. 
\end{abstract}

\thanks{Partially supported by NSF grants \ DMS-0505707 (Brady) and
\ DMS-0605137 (Forester).}  

\maketitle

\thispagestyle{empty}

\vspace{-.1in}
\centerline{\Small \textit{Dedicated to the memory of John Stallings}}
\smallskip

\section{Introduction}
Dehn functions of groups have been the subject of intense activity over the 
past two decades. 
The Dehn function $\delta(x)$ of a group $G$ is a quasi-isometry
invariant which describes the best possible isoperimetric inequality that
holds in any geometric model for the group. Specifically, for a given
$x$, $\delta(x)$ is the smallest number $A$ such that every
null-homotopic loop of length at most $x$ bounds a disk of area $A$ or
less. One defines length and area combinatorially, based on a
presentation $2$-complex for $G$, and the resulting Dehn function is well
defined up to coarse Lipschitz equivalence. If $G$ is the fundamental
group of a closed Riemannian manifold $M$, then ordinary length and area
in $M$ may be used instead, and one obtains an equivalent function. (This
seemingly modest but non-trivial result is sometimes called the Filling
Theorem; see \cite{mrb:bfs} or \cite{BuTa} for a proof.) 

Due in large part to the work of Birget, Rips, and
Sapir \cite{BRS} we now have a fairly complete understanding of which
functions are Dehn functions of finitely presented groups. In the case of
power functions, one defines the {\em isoperimetric spectrum} to be the
following (countable) subset of the line: 
\[ \IP \ = \ \{ \, \alpha \in [1,\infty) \mid f(x)=x^\alpha \text{ is
  equivalent to a Dehn function} \,\}. \] 
Combining results of \cite{gromov1,BrBr,bbfs,BRS}, we know 
that the isoperimetric spectrum has closure $\{1\} \cup [2,\infty)$ and
that it contains all rational numbers in $[2,\infty)$. Moreover, in the
range $(4, \infty)$, it contains (almost exactly) those numbers having
computational complexity below a certain threshold \cite{BRS}. The gap
$(1,2)$ reflects Gromov's theorem to the effect that every finitely
presented group with sub-quadratic Dehn function is hyperbolic, and hence
has linear Dehn function. Several proofs of this result are known: see
\cite{gromov1,olshanskii,papa2,bowditch}. 

By analogy with ordinary Dehn functions, one defines the $k$-dimensional
Dehn function $\delta^{(k)}(x)$, describing the optimal $k$-dimensional
isoperimetric inequality that holds in $G$. Given $x$,
$\delta^{(k)}(x)$ is the smallest $V$ such that every $k$-dimensional
sphere of volume at most $x$ bounds a $(k+1)$-dimensional ball of volume
$V$ or less. One uses combinatorial notions of volume, based on a chosen
$k$-connected model for $G$. Again, up to coarse Lipschitz equivalence,
$\delta^{(k)}(x)$ is preserved by quasi-isometries \cite{AWP}, and in
particular does not depend on the choice of model for $G$. 

Precise details regarding the definition of $\delta^{(k)}(x)$ are given
in Section \ref{prelimsection}. Nevertheless, it is worth emphasizing
here that we are filling spheres with balls, which is quite different from
filling spheres with chains, or cycles with chains (the latter of which
leads to the \emph{homological Dehn function}). It turns out that we do 
indeed need to make use of other variants (namely, the \emph{strong Dehn
function} -- see Section \ref{prelimsection}), but for us the primary
object of most immediate geometric interest is the Dehn function as
described above. 

In this paper we are concerned with the following question: what is the
possible isoperimetric behavior of groups, in various dimensions? 
For each positive integer $k$ one defines the {\em $k$-dimensional 
isoperimetric spectrum}: 
\[ \IPk \ = \ \{ \, \alpha \in [1,\infty) \mid f(x)=x^\alpha \text{ is
equivalent to a $k$-dimensional Dehn function} \,\}. \]
Until recently, relatively little was known about $\IPk$, especially when
$k \geq 3$. A few results concerning $\IP^{(2)}$ were known: in
\cite{alonso+,WP,wang} it was shown that $\IP^{(2)}$ contains infinitely
many points in the interval $[3/2,2)$, and various lower and upper bounds
were located throughout $[2, \infty)$; also in \cite{BrBr,mb:plms} it was
shown that $\IP^{(2)} \cap [3/2,2)$ is dense in $[3/2,2)$ and that $2,3
\in \IP^{(2)}$. 

The recent paper \cite{bbfs} established that $\IPk$ is dense in $[1 +
\frac{1}{k}, \infty)$ and contains all rational numbers in this range. 
The endpoint $1 + \frac{1}{k}$ corresponds to the isoperimetric
inequality represented by spheres in Euclidean space. The main purpose of
the present paper is to address the sub-Euclidean range $(1, 1 +
\frac{1}{k})$ and establish the existence of isoperimetric exponents
throughout this interval, for $k \geq 2$. 

To state our results we need some notation. 
If $A$ is a non-singular $n \times n$ integer matrix, let $G_A$ 
denote the ascending HNN extension of $\Z^n$ with monodromy $A$. 
Our first result is the following. 

\begin{theorem}\label{mainthm1}
Let $A$ be a $2 \times 2$ integer matrix with eigenvalues
$\lambda, \mu$ such that $\lambda > 1 > \mu$ and $\lambda\mu>1$. 
Then the $2$-dimensional Dehn function of $G_A$ is equivalent to 
$x^{2 + \log_{\lambda}(\mu)}$. 
\end{theorem}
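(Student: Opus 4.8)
The plan is to realize $G_A$ on a concrete $2$-connected complex carrying a ``flow'' along the Bass--Serre direction, and to extract both bounds from the way this flow distorts horizontal area by powers of $A$.

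\textbf{Model.} First I would fix a cocompact combinatorial $2$-connected model $X$ for $G_A$. Start from the presentation $2$-complex of $\langle \Z^2, t \mid txt^{-1}=Ax,\ x\in\Z^2\rangle$ and attach, in all $G_A$-orbits, the $3$-cells bounded by the spherical diagram that expresses $t[a,b]t^{-1}=[Aa,Ab]$ --- the loop $[Aa,Ab]$ being filled in $\Z^2$ by $\det A$ copies of the commutator relator. Since $\pi_2$ of the presentation complex is generated as a $\Z[G_A]$-module by this diagram, $X$ is $2$-connected. The Bass--Serre tree supplies a $G_A$-equivariant height function $\rho\colon X\to\R$ whose level sets (``sheets'') are copies of $\R^2$; the stable letter acts as a flow $\phi$ raising $\rho$ by $1$ and distorting each sheet by $A$: it stretches the $\lambda$-eigendirection by $\lambda$, contracts the $\mu$-eigendirection by $\mu$, and multiplies horizontal area by $m:=\det A=\lambda\mu$. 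As the introduction anticipates, the ordinary Dehn function is hard to control directly because admissible maps of balls can be wild, so I would run the estimates for the \emph{strong} $2$-dimensional Dehn function and transfer at the end.

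\textbf{Upper bound.} Given a $2$-sphere $S$ with $\area(S)\le x$, the area bound first confines the ``expensive part'' of $S$ --- the part that cannot simply be unflowed --- to a band of $O(\log_\lambda x)$ consecutive heights: carrying a loop up $k$ levels distends the surrounding corridor by $\lambda^k$, so a sphere that genuinely reaches across $k$ levels already has area $\gtrsim\lambda^k$. I would then fill $S$ by sweeping its horizontal material along the flow to a single capping height, using the $3$-cells to move it level by level, and coning off whatever remains once it has degenerated below unit width. The horizontal cost of the sweep is a geometric series dominated by $\area(S)$, but transporting the bulk across $\approx\log_\lambda x$ levels amplifies it at each step, and the accumulated volume is controlled by a computation of the shape
\[
 m^{\log_\lambda x}\cdot x \;=\; x^{\log_\lambda m}\cdot x \;=\; x^{\,1+\log_\lambda(\lambda\mu)} \;=\; x^{\,2+\log_\lambda\mu},
\]
giving the upper bound.

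\textbf{Lower bound and the main obstacle.} For the matching lower bound I would build an explicit family of $2$-spheres $S_n$ with $\area(S_n)\to\infty$ by iterated ``inflation'': starting from a small seed sphere, repeatedly apply $\phi^{-1}$ together with a splitting-and-regluing that uses the $m$-fold branching of the Bass--Serre tree, arranging that each step multiplies the area by a bounded factor while strictly raising the filling difficulty --- the point being that undoing one step is not free, since the regluing cannot be reversed inside a single sheet. One then argues that any filling $3$-ball of $S_n$ must spend horizontal volume, magnified by the flow distortion, at $\approx\log_\lambda(\area)$ distinct heights, so that $\FVol(S_n)\gtrsim\area(S_n)^{2+\log_\lambda\mu}$; the natural vehicle for this is a corridor/tracking argument run against the strong Dehn function, counting how the $3$-cells straddling successive levels meet $S_n$. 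The hard part is exactly this: certifying that the spheres $S_n$ resist \emph{every} filling, including economical ones that push material downward (where area contracts) and cap it off, or that scatter it across many tree branches. Making the lower bound robust against such strategies --- which is why one must pass through the strong Dehn function rather than the naive one --- is where essentially all the technical difficulty lies; the upper bound, though delicate, is comparatively direct.
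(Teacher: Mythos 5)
Your proposal has two genuine gaps, one in each bound. For the lower bound, your plan is to certify that the spheres $S_n$ resist every filling by a ``corridor/tracking argument run against the strong Dehn function,'' but this is logically backwards: since $\delta^{(2)}(x)\leq\Delta^{(2)}(x)$, a lower bound for the strong Dehn function says nothing about $\delta^{(2)}$, and it is $\delta^{(2)}$ that the theorem concerns (the strong version is only needed later, for upper bounds and the product/suspension argument of Theorem 1.2). More importantly, you offer no actual mechanism for the step you yourself identify as carrying all the difficulty. The paper's route removes that difficulty entirely: it builds \emph{embedded} balls $B_n\subset\widetilde{X}$ by doubling combinatorial approximations of the boxes $R_n=[0,\lambda^n]\times[0,(\lambda\mu)^n]\times[0,n]$ across two branches of the tree that diverge at height $0$ (so the huge bottom face, of area $\lambda^n(\lambda\mu)^n$, is interior and the boundary sphere has area only $\sim\lambda^n$), and then a short homological degree argument (using $\dim\widetilde{X}=3$ and $H_3(\widetilde{X})=0$) shows an embedded ball is automatically a least-volume filling of its boundary, so robustness against \emph{all} filling strategies is free. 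Nothing like this appears in your sketch, and your ``iterated inflation'' spheres are not developed far enough to substitute for it.

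For the upper bound, your confinement claim is quantitatively wrong, and the error propagates into the exponent. Reaching \emph{down} $k$ levels costs area $\sim\lambda^{k}$ (the $x$-sides), but reaching \emph{up} $k$ levels costs only $\sim\mu^{-k}$ (the $y$-sides), and since $\mu^{-1}<\lambda$ a sphere of area $x$ can genuinely span about $\log_{1/\mu}x>\log_{\lambda}x$ levels; feeding that band width into your own sweep estimate gives $x\cdot(\lambda\mu)^{\log_{1/\mu}x}=x^{1+\log_{1/\mu}(\lambda\mu)}$, which is strictly worse than $x^{2+\log_{\lambda}\mu}$. Obtaining the correct exponent forces you to treat the two horizontal directions asymmetrically, which is exactly what the paper's argument does and your sweep does not: the paper never constructs a filling at all, but takes a least-volume transverse filling $g\co W\to\widetilde{X}$, flows it downward (the non-branching direction, where a vertical column above a horizontal element has volume $\frac{1}{\ln(\lambda\mu)}$ times its area), and reduces everything to bounding the area of the resulting fold set; that area is bounded by $\ell_x\cdot\ell_y$, with $\ell_x$ charged to boundary area in a slice just above the fold and $\ell_y$ charged, with a factor $\mu^{L}$, to a slice $L=\log_{\lambda}(\RArea(f))$ levels higher, and the balance $\lambda^{L}=\RArea(f)$ yields $\mu^{L}\RArea(f)^2=\RArea(f)^{2+\log_{\lambda}\mu}$. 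This also sidesteps issues your constructive sweep would have to face (parts of the sphere over different branches cannot be pushed to a ``single capping height'' without descending, non-embedded and folded spheres, measurability of the sweep), which the paper handles through transversality, cancellation of adjacent $0$-handles, and the fold-set bookkeeping. In short, your numerology matches the extremal example, but neither bound is actually established by the arguments you propose, and your assessment that the lower bound is the hard part while the upper bound is ``comparatively direct'' is the reverse of where the real work lies.
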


In Section \ref{densitysect} we show that the exponents arising in
the theorem are dense in the interval $(1,2)$. Thus, roughly half of
these groups have sub-Euclidean filling volume 
for $2$-spheres, occupying densely the desired range of possible behavior. 

Given an $n \times n$ matrix $A$, the \emph{suspension}  $\Sigma A$ of
$A$ is the $(n+1)\times (n+1)$ matrix obtained by direct sum with the
$1\times 1$ identity matrix. Since $G_{\Sigma A} \cong G_A \times \Z$,
results from \cite{bbfs} imply the following (see Section
\ref{highersect} for details). 

\begin{theorem}\label{mainthm2} 
Let $G_A$ be as in Theorem \ref{mainthm1}. Then the $(i+2)$-dimensional
Dehn function of $G_{\Sigma^i A}$ is equivalent to $x^s$ where $s =
\frac{(i + 1)\alpha - i}{i\alpha - (i -1)}$ and $\alpha = 2 +
\log_{\lambda}(\mu)$. 
\end{theorem}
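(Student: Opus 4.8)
The plan is to bootstrap from Theorem~\ref{mainthm1} using the product formula for higher-dimensional Dehn functions of direct products with $\Z$. Since $\Sigma^i A$ is obtained by $i$-fold direct sum with the $1\times 1$ identity, we have $G_{\Sigma^i A} \cong G_A \times \Z^i$, and the strategy is to apply, $i$ times in succession, a result of \cite{bbfs} describing how the $k$-dimensional Dehn function of $H \times \Z$ is determined by the filling functions of $H$. First I would recall the precise form of that result: if $H$ has (strong) $k$-dimensional Dehn function equivalent to $x^\beta$ with $\beta > 1 + \frac1k$, then $H \times \Z$ has $(k+1)$-dimensional Dehn function equivalent to $x^{\beta'}$ where $\beta'$ is given by the ``dimension shift'' formula coming from the fact that filling a $(k+1)$-sphere in the product reduces to filling a family of $k$-spheres in $H$ parametrized by an interval.

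Next I would set up the induction. The base case is Theorem~\ref{mainthm1}: $G_A$ has $2$-dimensional Dehn function $x^\alpha$ with $\alpha = 2 + \log_\lambda(\mu)$. One checks $\alpha$ lies in the admissible range (here $(1,2)$, so in particular $\alpha > 1 + \frac12$ need not hold, which is exactly why the strong Dehn function mentioned in the excerpt must be carried along — see the remark in Section~\ref{prelimsection}). Writing $f_0 = \alpha$ and $f_{j+1} = \Phi(f_j)$ for the transformation $\Phi$ supplied by \cite{bbfs} at the relevant dimension, the claim is that after $i$ steps one lands at $\delta^{(i+2)}(G_{\Sigma^i A}) \simeq x^{f_i}$ with
\[ f_i \ = \ \frac{(i+1)\alpha - i}{i\alpha - (i-1)}. \]
I would verify this by induction: assuming $f_{j} = \frac{(j+1)\alpha - j}{j\alpha - (j-1)}$, substitute into $\Phi$ and simplify to get $f_{j+1} = \frac{(j+2)\alpha - (j+1)}{(j+1)\alpha - j}$. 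This is a routine algebraic manipulation once $\Phi$ is written explicitly; the Möbius-like form of the answer is precisely what makes such a fixed shape propagate.

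The main obstacle is not the algebra but making sure the hypotheses of the product theorem from \cite{bbfs} are met at every stage, and in the right (strong) category. Specifically one must check that each intermediate exponent $f_j$ stays in the interval where the dimension-shift formula is valid, and that the relevant filling function of each $G_{\Sigma^j A}$ is not just the ordinary $(j+2)$-dimensional Dehn function but the strong Dehn function — otherwise the product formula cannot be iterated. I would therefore prove, in parallel with the exponent computation, that $G_A$ (and hence each $G_{\Sigma^j A}$) has strong $k$-dimensional Dehn function equal to its ordinary Dehn function in the relevant dimension; for $G_A$ this should follow from the geometry of the ascending HNN extension of $\Z^2$ already analyzed in the proof of Theorem~\ref{mainthm1}, and for the suspensions it is preserved by the product construction. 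Once these bookkeeping points are in place, the theorem follows by assembling the induction and reading off $s = f_i$.
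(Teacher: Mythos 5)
Your overall skeleton---induction on $i$, iterating a product-with-$\Z$ result from \cite{bbfs}, the recursion $s(i) = 2 - 1/s(i-1)$, and the M\"obius-type closed form---matches the paper, and your algebra is correct. But there is a genuine gap: you treat the product result as a two-sided equivalence (``if $H$ has (strong) $k$-dimensional Dehn function $\simeq x^\beta$ then $H \times \Z$ has $(k+1)$-dimensional Dehn function $\simeq x^{2-1/\beta}$''). No such statement is available. What \cite{bbfs} provides, and what the paper uses, are two one-sided results proved by different mechanisms: Theorem \ref{product} gives only an \emph{upper} bound, and only for the \emph{strong} Dehn function (from $\Delta^{(n)}_X(x) \leq Cx^s$ one gets $\Delta^{(n+1)}_{X\times S^1}(x) \leq C^{1/s}x^{2-1/s}$), while the \emph{lower} bound for $\delta^{(k+1)}$ of $G \times \Z$ comes from Proposition \ref{lowerGxZ} and needs the additional hypothesis that $\delta^{(k)}_G$ has embedded representatives (Definition \ref{embeddedreps}), a property that must itself be propagated through the induction. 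Your proposal never addresses how lower bounds pass to the product, so as written it only yields $\delta^{(i+2)}(x) \preceq x^{s(i)}$. The repair is to observe that the construction of Section \ref{lowersec} produces embedded balls $B_n \subset \widetilde{X}$ with $\Vol^3(B_n) \succeq \bigl(\Vol^2(\partial B_n)\bigr)^{\alpha}$ and exponentially bounded boundary volumes, i.e.\ $\delta^{(2)}_{G_A}$ has embedded representatives, and then to run the induction on three statements simultaneously: an upper bound for $\Delta^{(i+2)}$, a lower bound for $\delta^{(i+2)}$, and the embedded-representatives property---exactly as the paper does.

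A secondary point: your plan to show that the strong $2$-dimensional Dehn function of $G_A$ equals the ordinary one, and that this equality is ``preserved by the product construction,'' is both unsupported and unnecessary. In dimension $2$ the two functions can genuinely differ (Young's example, cited in Section \ref{prelimsection}), and neither \cite{bbfs} nor this paper proves any such preservation statement. What the paper actually does is bound $\Delta^{(2)}_{G_A}$ from above \emph{directly}: the argument of Section \ref{uppersec} fills arbitrary compact $3$-manifolds $W$, not just balls, which is precisely the input Theorem \ref{product} requires. Equality of $\Delta^{(2)}$ and $\delta^{(2)}$ for these groups is only an a posteriori consequence of the matching upper and lower bounds, never an ingredient of the proof; and the reason the strong Dehn function must be carried along is this structure of Theorem \ref{product}, not the position of $\alpha$ relative to $1 + \frac{1}{k}$.
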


Given that the numbers $\alpha$ are dense in the interval $(1,2)$, it
follows that the exponents $s$ are dense in $(1, (i+2)/(i+1))$. Together
with Corollary E of \cite{bbfs}, we have the following result,
illustrated in Figure \ref{fig:spectra}. 

\begin{corollary}
$\IPk$ is dense in $[1, \infty)$ for $k \geq 2$. 
\end{corollary}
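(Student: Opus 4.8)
The plan is to derive the corollary by fitting together two dense families of isoperimetric exponents --- the sub-Euclidean ones furnished by Theorem \ref{mainthm2} and the super-Euclidean ones of \cite{bbfs} --- and then checking that their ranges overlap, so that jointly they are dense in all of $(1,\infty)$. Fix $k \geq 2$ and set $i = k - 2 \geq 0$, so that $\Sigma^i A$ is a $k \times k$ matrix and Theorem \ref{mainthm2} governs the $k$-dimensional Dehn function of $G_{\Sigma^i A}$; note that for $i = 0$ this is just Theorem \ref{mainthm1}.

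First I would record the sub-Euclidean family. By the density of the exponents $\alpha = 2 + \log_\lambda(\mu)$ in $(1,2)$ established in Section \ref{densitysect}, together with the formula $s = \frac{(i+1)\alpha - i}{i\alpha - (i-1)}$ of Theorem \ref{mainthm2}, the corresponding $k$-dimensional Dehn function exponents $s$ form a dense subset of $(1,\frac{i+2}{i+1}) = (1,\frac{k}{k-1})$. Indeed $\alpha \mapsto s$ is a Möbius transformation with $s'(\alpha) = (i(\alpha-1)+1)^{-2} > 0$ for $\alpha > 1$, hence a homeomorphism of $(1,2)$ onto $(s(1),s(2)) = (1,\frac{k}{k-1})$, and homeomorphisms carry dense sets to dense sets. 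Thus $\IPk$ has a subset dense in $(1,\frac{k}{k-1})$. Then I would invoke Corollary E of \cite{bbfs}, by which $\IPk$ is dense in $[1+\frac1k,\infty)$.

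Finally I would check that the two intervals overlap. Since $\frac{k}{k-1} = 1 + \frac1{k-1}$ and $\frac1{k-1} > \frac1k$, we get $1 + \frac1k < \frac{k}{k-1}$, so the two dense families in fact overlap already on $[1+\frac1k,\,1+\frac1{k-1})$. Consequently $\IPk$ contains a set dense in $(1,\frac{k}{k-1}) \cup [1+\frac1k,\infty) = (1,\infty)$, whose closure is $[1,\infty)$; hence $\IPk$ is dense in $[1,\infty)$, which is the assertion. (If one wishes to have $1 \in \IPk$ itself rather than only $1$ in the closure, one can note that some group realizes a linear $k$-dimensional Dehn function, but this is not needed for the density statement.)

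The main obstacle is, frankly, not in the corollary at all: all of the geometric substance lives in Theorem \ref{mainthm2} (and through it Theorem \ref{mainthm1}), in the number-theoretic density of the exponents $\alpha$ proved in Section \ref{densitysect}, and in the results of \cite{bbfs}. The one thing the corollary itself genuinely requires is the endpoint comparison $1 + \frac1k < \frac{k}{k-1}$, which guarantees that the new sub-Euclidean interval and the old super-Euclidean range are not separated by a gap; this is merely the monotonicity of $m \mapsto 1 + \frac1m$, and it holds with room to spare for every $k \geq 2$.
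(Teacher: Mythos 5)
Your proposal is correct and follows essentially the same route as the paper: the paper's proof is precisely the observation that the exponents $s = \frac{(i+1)\alpha - i}{i\alpha-(i-1)}$ inherit density in $(1,\frac{i+2}{i+1})$ from the density of $\alpha$ in $(1,2)$, combined with Corollary E of \cite{bbfs} giving density in $[1+\frac{1}{k},\infty)$. Your explicit verification that $\alpha \mapsto s$ is an increasing homeomorphism and that $1+\frac{1}{k} < \frac{k}{k-1}$ just makes precise what the paper leaves implicit.
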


\begin{figure}[ht]
	\begin{center}
		\psfrag{1}{{\scriptsize $1$}}
		\psfrag{2}{{\scriptsize $2$}}
		\psfrag{3}{{\scriptsize $3$}}
		\psfrag{32}{{\scriptsize $\frac{3}{2}$}}
		\psfrag{43}{{\scriptsize $\frac{4}{3}$}}
		\psfrag{54}{{\scriptsize $\frac{5}{4}$}}
		\psfrag{65}{{\scriptsize $\frac{6}{5}$}}
		\psfrag{i1}{{\scriptsize $\IP$}}
		\psfrag{i2}{{\scriptsize $\IP^{(2)}$}}
		\psfrag{i3}{{\scriptsize $\IP^{(3)}$}}
		\psfrag{i4}{{\scriptsize $\IP^{(4)}$}}
		\psfrag{i5}{{\scriptsize $\IP^{(5)}$}}
                \psfrag{ga}{{\scriptsize $G_A$}}
                \psfrag{ga1}{{\scriptsize $G_{\Sigma A}$}}
                \psfrag{ga2}{{\scriptsize $G_{\Sigma^2 A}$}}
                \psfrag{ga3}{{\scriptsize $G_{\Sigma^3 A}$}}
		\includegraphics[width=4in]{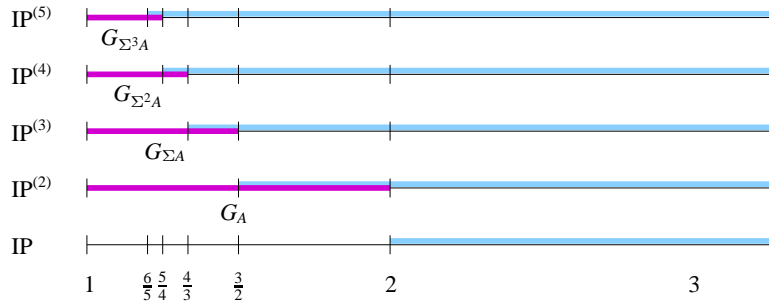}
	\end{center}
	\caption{Isoperimetric exponents of $G_{\Sigma^i A}$. 
The blue intervals indicate isoperimetric exponents
for the groups constructed in \cite{bbfs}. 
}\label{fig:spectra}  
\end{figure}

\subsection*{Methods}
The methods used here to establish isoperimetric inequalities for $G_A$
are quite different from those used in \cite{bbfs}. In the latter work, 
a slicing argument was used to estimate volume based on information coming
from one-dimensional Dehn functions. This approach is rather less promising in
the sub-Euclidean realm, since there are no one-dimensional Dehn
functions there to reduce to. (Reducing to larger Dehn functions does
not seem feasible, at least by similar methods.) 

Instead we must find and measure least-volume fillings of $2$-spheres in
$G_A$ directly, using properties of the particular geometry of this
group. We work with a piecewise Riemannian cell complex with a metric
locally modeled on a solvable Lie group $\R^2 \rtimes \R$. This metric is
particularly simple from the point of view of the given coordinates, and
these preferred coordinates make possible various volume and area
calculations that are central to our arguments. 

The preferred coordinates just mentioned do not behave well
combinatorially, however. Coordinate lines pass through cells in an
aperiodic manner, and this cannot be remedied by simply changing the cell
structure. If one attempts to measure volume combinatorially, counting
cells by passing between cells and their neighbors in an organized fashion
(as with ``$t$-corridor'' arguments, for example), one loses the
advantage of the preferred coordinates conferred by the special geometry
of these groups. To count cells, therefore, we use integration and divide
by the volume of a cell. 

The combinatorial structure is still relevant, however. The piecewise
Riemannian model is not a manifold, and its branching behavior is a
prominent feature of the geometry of $G_A$. In order to make clean
transitions between the combinatorial and Riemannian viewpoints, we use
the transversality technology of Buoncristiano, Rourke, and 
Sanderson \cite{BuRoSa}. This provides the appropriate notion of van
Kampen diagrams for higher-dimensional spheres and fillings. 
Transversality also helps in dealing with
singular maps, which otherwise present technical difficulties. 

One other technical matter deserves mention: in order to apply results of
\cite{bbfs} to deduce Theorem \ref{mainthm2}, we are obliged to find
bounds for the \emph{strong Dehn function}, which encodes uniform
isoperimetric inequalities for fillings of surfaces by arbitrary
$3$-manifolds. See Section \ref{prelimsection} for definitions and results
concerning the strong Dehn function. 

\begin{remcon}
The groups $G_A$ in Theorem \ref{mainthm1} were classified up to
quasi-isometry by Farb and Mosher \cite{fm3}. At the time, none of the
usual quasi-isometry invariants could distinguish these groups, but
the two-dimensional Dehn function apparently does so quite well. 
We conjecture that it is in fact a complete invariant for this class of
groups. What is missing is the knowledge that the real number
$\log_{\lambda}(\mu)$ determines the diagonal matrix $\big( \,
{}^{\lambda}_0 \ {}^0_{\mu} \, \big)$ up to a rational power. One needs
to take into account the specific assumptions on the integer matrix $A$
(eg. having a contracting eigenspace), to rule out examples
such as $\big( \, {}^{4}_0 \ {}^0_{2} \, \big)$ and $\big( \, {}^{9}_0 \
{}^0_{3} \, \big)$. 
\end{remcon}

\section{Preliminaries}\label{prelimsection}

In this section we discuss in detail some of the key notions needed to
carry out the proofs of the theorems. First we give a brief account
of the transversality theory of Buoncristiano, Rourke, and
Sanderson. Then we discuss volume, Dehn functions of various types, and
some basic results concerning these.

\subsection*{Handles and transverse maps} Using transversality, a map
from a manifold to a cell complex can be put into a nice form, called a
transverse map \cite{BuRoSa}. Transverse maps induce 
\emph{generalized handle decomopositions} of manifolds, which will play
the role of van Kampen diagrams in higher dimensions. 
Whereas \emph{admissible maps} were used for this purpose in \cite{bbfs},
transverse maps have additional structure, incorporating combinatorial
information dependent on the way cells meet locally in the target complex. 

An \emph{index $i$ handle} (or \emph{generalized handle}) of dimension
$n$ is a product $\Sigma^i \times D^{n-i}$, where $\Sigma^i$ is a
compact, connected $i$-dimensional manifold with boundary, and $D^{n-i}$
is a closed disk. Let $M$ be a closed $n$-manifold. A \emph{generalized
handle decomposition} of $M$ is a filtration $\emptyset = M^{(-1)}
\subset M^{(0)} \subset \cdots \subset M^{(n)} = M$ by codimension-zero
submanifolds, such that for each $i$, $M^{(i)}$ is obtained from
$M^{(i-1)}$ by attaching finitely many index $i$ handles, 
as follows. To attach a single handle $H = \Sigma^i \times D^{n-i}$,
choose an embedding $h \co \partial \Sigma^i \times D^{n-i} 
\to \partial M^{(i-1)}$ and form the manifold $M^{(i-1)} \cup_h H$. Note that
handle attachment is always along $\partial \Sigma^i \times D^{n-i}$, and
never along $\Sigma^i \times \partial 
D^{n-i}$. To attach several handles, we require that the
attaching maps have disjoint images in $\partial M^{(i-1)}$, so that
the order of attachment does not matter. Note that both $M^{(i-1)}$ and the
individual handles $H$ are embedded in $M^{(i)}$. 

If every $\Sigma^i$ is a disk then this is the usual notion of handle
decomposition arising in classical Morse theory. Some new things can
occur by varying 
$\Sigma^i$, however. For instance, we allow $\Sigma^i$ to be closed, in
which case the attaching map is empty and $M^{(i-1)} \cup_h H$ is the
disjoint union $M^{(i-1)} \sqcup H$. Such a handle is called a
\emph{floating handle}. For example, $M^{(0)}$ is formed from $M^{(-1)} =
\emptyset$ by attaching (floating) $0$-handles $D^0 \times D^n$, and
$M^{(0)}$ is simply several copies of $D^n$. (The lowest-index handles
will always be floating ones.) Another phenomenon is that
handles may be embedded in $M$ in topologically interesting ways, as in
the following example. 

\begin{example}
Given a closed orientable $3$-manifold $M$, we may construct a
generalized handle decomposition as follows. 
Let $K \subset M$ be a knot or link in $M$. Let $M^{(1)}$ be a
regular neighborhood of $K$ and declare each component to be a
(floating) $1$-handle. Let $\Sigma$ be a Seifert surface for $K$, and 
let $\{\Sigma_j\}$ be the components of $\Sigma \cap (M -
\interior(M^{(1)}))$. The $2$-handles will be regular neighborhoods of
the surfaces $\Sigma_j$ in $M - \interior(M^{(1)})$. Lastly, the
$3$-handles will be the components of $M - \interior(M^{(2)})$. This 
decomposition has no $0$-handles, and its $1$-handles are (obviously)
knotted. 
\end{example}

Now suppose $M$ is an $n$-manifold with boundary. A \emph{generalized
handle decomposition of $M$} is a pair of filtrations $\emptyset =
M^{(-1)} \subset M^{(0)} \subset \cdots \subset M^{(n)} = M$ and
$\emptyset = N^{(-1)} \subset N^{(0)} \subset \cdots \subset N^{(n-1)} =
\partial M$ by codimension-zero submanifolds, such that: 
\begin{itemize}
\item[(i)] the filtration $\emptyset = N^{(-1)} \subset N^{(0)} \subset
\cdots \subset N^{(n-1)} = \partial M$ is a generalized handle
decomposition of $\partial M$, 
\item [(ii)] for each $i$, $M^{(i)}$ is obtained from $M^{(i-1)} \cup
N^{(i-1)}$ by attaching finitely many index $i$ handles, and  
\item[(iii)] each index $i-1$ handle of $\partial M$ is a connected
component of the intersection of $\partial M$ with an index $i$ handle
of $M$. In particular, $N^{(i-1)} = \partial M \cap M^{(i)}$ for all $i$. 
\end{itemize}
In (ii), each handle $H = \Sigma^{i} \times D^{n-i}$ is attached via
an embedding $h \co (\partial \Sigma^{i} \times D^{n-i}) \to (\partial
M^{(i-1)} \cup N^{(i-1)})$. As before, we require the images of the
attaching maps of the index $i$ handles to be disjoint. It follows
that the individual $i$-handles are embedded in $M$, and are disjoint
from each other. 

Let $f \co M \to X$ be a map from a compact $n$-manifold to a CW
complex. We say that $f$ is \emph{transverse} to the cell structure of
$X$ if $M$ has a generalized handle decomposition such that the
restriction of $f$ to each handle is given by projection onto the second
factor, followed by the characteristic map of a cell of $X$. Thus, index
$i$ handles map to $(n-i)$-dimensional cells. In particular, $M$ maps
into the $n$-skeleton of $X$. In a transverse map there may be
floating handles of any index, and it may not be possible to modify $f$
to eliminate these. By the same token, one must always allow for the
possibility of knotted handles. 

One virtue of transverse maps is that they can easily be proved to
exist. However, to accomplish this, we must assume additional structure
on the target complex $X$. We say that $X$ is a \emph{transverse CW
complex} if the attaching map of every cell is transverse to the cell
structure of the skeleton to which it is attached. The main existence
result is the following: 

\begin{transthm}[Buoncristiano-Rourke-Sanderson]
Let $M$ be a compact smooth manifold and $f\co M \to X$ a continuous map
into a transverse CW complex. Suppose $f \vert_{\partial M}$ is
transverse. Then $f$ is homotopic rel $\partial M$ to a transverse map $g
\co M \to X$. 
\end{transthm}

The theorem includes the case where $M$ is closed: all maps of closed
manifolds can be made transverse by a homotopy. 

This theorem is proved in \cite{BuRoSa} for PL manifolds, and the proof
in the smooth case is entirely analogous. The proof is a step by step
application of smooth transversality, applied to preimages of open cells
(considered as smooth manifolds themselves), starting with the top
dimensional cells and working down. The first stage of the argument, in
which the $0$-handles are constructed, is explained fully in the proof of
Lemma~2.3 of \cite{bbfs}. This is precisely the construction of
admissible maps (defined below). 

\begin{remark}
In order to apply the theorem one needs transverse CW complexes. Any CW
complex can be made transverse by successively homotoping the attaching
maps of its cells (by the Transversality Theorem and induction on
dimension); this procedure 
preserves homotopy type. Moreover, in this paper, the complex $X$ that we
use can 
be made transverse in a more direct and controlled way, preserving both
its homeomorphism type and its partition into open cells; see
Section~\ref{3cx} and Figure~\ref{fig:cell}. 
\end{remark}

\subsection*{Admissible maps and combinatorial volume} 
Recall from \cite{bbfs} the definition of an \emph{admissible map}: it
is a map $f \co M^n \to  X^{(n)} \subset X$ such that the preimage of
every open $n$-cell is a disjoint union of open $n$-dimensional balls
in $M$, each mapped by $f$ homeomorphically onto the $n$-cell. The
\emph{combinatorial volume} of an admissible map, denoted $\Vol^n(f)$, is
the number of open balls mapping to $n$-cells. 

It is clear that transverse maps are admissible: the interiors of
$0$-handles are open balls, and the rest of $M$ maps into
$X^{(n-1)}$. 
Conversely, if one applies the proof of the transversality 
theorem to an admissible map to make it transverse, then the preimages of
the $n$-cells will not change (except possibly by being shrunk
slightly), and combinatorial volume is preserved. For this reason, given
an admissible map, the closures of the open balls mapping to $n$-cells
will be called $0$-handles. 

Note that in an admissible map, $0$-handles may intersect each other in
their boundaries. For example, if $M$ has a cell structure, then the
identity map is admissible, with $0$-handles equal to the closures of the
top-dimensional cells. 

In \cite[Lemma 2.3]{bbfs} it is shown that every map from a smooth or PL
manifold is homotopic to an admissible map. This is a special case of the
Transversality Theorem, though it is not required that the target CW
complex be transverse. The existence of admissible maps can also be
proved without relying on a smooth or PL structure; see Epstein
\cite[Theorem 4.3]{epstein}. 

\subsection*{Volume reduction} 
In this paper, generalized handle decompositions (and transverse maps)
will serve as  higher-dimensional analogues of van Kampen
diagrams. Indeed, in dimension $2$, transverse maps already provide an
alternative to the combinatorial approach to diagrams, and they have
several advantages. This is the viewpoint taken in \cite{cpr} and
\cite{stallings}, for example. 
With van Kampen diagrams one often considers \emph{reduced} diagrams,
where no folded cell pairs occur. The same type of cancellation process
also works for admissible and transverse maps. One such process is given
as follows. 

Let $f \co M^n \to X$ be an admissible map, and let $H_0,H_1 \subset M$
be $0$-handles, and $\alpha \subset M - (\interior(H_0) \cup
\interior(H_1))$ a $1$-dimensional submanifold homeomorphic to an
interval, with endpoints in $H_0$ and $H_1$ (we also allow the
degenerate case in which $\alpha$ is a point in $H_0 \cap H_1$). Suppose
that $f$ maps $\alpha$ to a point and maps $H_0$ and $H_1$ to the same
$n$-cell, with opposite orientations (relative to a neighborhood of $H_0
\cup \alpha \cup H_1$, which is always orientable). Since $H_0$ and
$H_1$ are $0$-handles, there are homeomorphisms $h_i \co H_i \to D^n$
such that $f \vert_{H_i} = \Phi \circ h_i$ for some characteristic map
$\Phi\co D^n \to X$.  Now delete interiors of $H_i$ from $M$ and then
form a quotient $M'$ by gluing boundaries via $h_0^{-1} \circ h_1$ and
collapsing $\alpha$ to a point. The new space maps to $X$ by $f$, and
there is a homeomorphism $g \co M\to M'$. Now $f \circ g$ is an
admissible map $M \to X$ with two fewer $0$-handles. Note that the other
$0$-handles are unchanged. If desired, this new map can then be made
transverse, with the same $0$-handles, and with its (lowered) volume
unchanged. 

\begin{remark}
There is, in fact, a more general procedure for cancelling $H_0$ and $H_1$
that does not require $\alpha$ to map to a point. This procedure is due
to Hopf \cite{hopf2} and a detailed treatment was given by Epstein 
\cite{epstein}. If $X$ is $2$-dimensional then the more general procedure
is not particularly useful: new $0$-handle pairs can be created when
cancelling $H_0$ and $H_1$, and volume may fail to decrease. In higher
dimensions, however, no new $0$-handle pairs are created and the volume
will always decrease by $2$. 
\end{remark}

\subsection*{Riemannian volume} 

If $N$ is a smooth manifold, $M$ an oriented Riemannian manifold of the same
dimension, and $f\co N \to M$ a smooth map, then the \emph{volume} of $f$
can be defined. Following Gromov \cite[Remarks 2.7 and 2.8{\small
$\frac{1}{2}$}]{gromov:metric}, let $\nu_M$ be the volume form on $M$ and
choose any Riemannian metric on $N$. We define
\[ \RVol(f) \ = \ \int_N f^* (\abs{\nu_M}).\]
The integral is independent of the choice of metric on $N$, by the change
of variables formula. Note that we are using $\abs{{\rm vol}}(f)$, not ${\rm
vol}(f)$, in the notation of \cite{gromov:metric}. (The latter allows
cancellation of volume, which is not appropriate in our setting.) In
fact, we need not assume that $M$ is oriented, since $\abs{\nu_M}$ is
still defined. If $\dim N = 2$ then $\RVol$ is also denoted $\RArea$. 

If $f$ is an immersion then this definition amounts to giving $N$ the
pullback metric and taking the volume of $N$. More generally, if $f$
fails to be an immersion at some $x \in N$, then $f^*(\abs{\nu_M})$ is
zero at $x$, and does not contribute to volume. Hence, $\RVol(f)$ is the
volume of the pullback metric on $U \subset N$, the set on which $f$ is
an immersion. Note that $U$ is open, and hence is a Riemannian
manifold. Generically, $U$ has full measure in $N$ when $\dim N \leq \dim M$
\cite[1.3.1]{gromov:pdr}. 

From this perspective, we can now define $\RVol(f)$ when $\dim N \not=
\dim M$. We define $\RVol(f)$ to be the volume of $U\subset N$, 
the set on which $f$ is an immersion, with the pullback metric. Note that
$\RVol(f)$ measures \emph{$n$-dimensional volume}, where $n = \dim N$. 

Lastly, we wish to extend the definition of volume to allow a piecewise
Riemannian CW complex in place of $M$. The complex $\widetilde{X}$ that
interests us is a $3$-complex with branching locus a $2$-manifold,
homeomorphic to the product of $\R^2$ with a simplicial tree. In a
neighborhood of any singular point one sees a union of half-spaces joined
along their boundaries, naturally grouped into two collections, with a
well defined common tangent space at the singular point. The situation is
similar to that of a train track, or a branched surface from lamination
theory (eg. \cite[Section 6.3]{calegari}). There is a smooth  structure, and
$\widetilde{X}$ comes equipped with an immersion $q \co\widetilde{X} \to
M$ onto a Riemannian manifold $M$. (This immersion is not locally
injective, but is injective on tangent spaces.) The Riemannian 
metric on $\widetilde{X}$ is the pullback under $q$ of the metric on
$M$. The volume $\RVol(f)$ can now be defined directly (as above) using
this metric on $\widetilde{X}$, or equivalently by defining $\RVol(f) =
\RVol(q \circ f)$. 

\begin{remarks}
(1) If $\dim N > \dim M$ (or $\dim N > \dim \widetilde{X}$) then
    $\RVol(f)$ is zero, since $f$ is an immerison nowhere. Similarly, if
    $f$ factors through a manifold of smaller dimension, then the volume
    is zero. 

(2) Any transverse map $f \co N \to \widetilde{X}$ is piecewise smooth,
    and is a submersion on each handle. It will be an immersion
    only on the $0$-handles. This latter statement also holds for
    admissible maps, since the complement of the $0$-handles is mapped into a
    lower-dimensional skeleton. 
\end{remarks}

\begin{remark}\label{minRvol}
We will be interested in finding least-volume maps extending a given
boundary map. If the set of volumes of $n$-cells of a piecewise Riemannian
CW complex is finite, then least-volume transverse maps of $n$-manifolds exist
in any homotopy class. This is because the Riemannian volume of a
transverse map is a \emph{positive} linear combination of numbers in this
set, and hence the set of such volumes is discrete, and well-ordered. 
\end{remark}

\subsection*{Dehn functions} 
Here we recall the definition of the $n$-dimensional Dehn function of a
group from \cite{bbfs}. Note that these definitions all use combinatorial
volume. Given a group $G$ of type $\mathcal{F}_{n+1}$, fix an aspherical
CW complex $X$ with fundamental group $G$ and finite
$(n+1)$-skeleton (the existence of such an $X$ is the meaning of
``type $\mathcal{F}_{n+1}$''). Let $\widetilde{X}$ be the universal cover
of $X$. If 
$f\co S^n \to \widetilde{X}$ is an admissible map, define the
\emph{filling volume} of $f$ to be the minimal volume of an admissible
extension of $f$ to $B^{n+1}$: 
\[ \, \FVol(f) \ = \ \min \{\, \Vol^{n+1}(g) \mid g \co B^{n+1} \to 
\widetilde{X}, \  g\vert_{\partial B^{n+1}} = f \, \}.\]  
Note that extensions exist since $\pi_n(\widetilde{X})$ is trivial, and
any extension can be made admissible, by \cite[Lemma 2.3]{bbfs}. We
define the  \emph{$n$-dimensional Dehn function} of $X$ to be 
\[\delta^{(n)}(x) \ = \ \sup \{ \, \FVol(f) \mid f \co S^n \to
\widetilde{X}, \ \Vol^n(f) \leq x \, \}.\]
Again, the maps $f$ are assumed to be admissible. 

In \cite{AWP} it was shown that $\delta^{(n)}(x)$ is finite for each $x\in
\N$, and that, up to coarse Lipschitz equivalence, $\delta^{(n)}(x)$ depends
only on $G$. Thus the Dehn function will sometimes be denoted
$\delta^{(n)}_G(x)$. (Recall that functions $f,g \co \R_+ \to \R_+$ are
\emph{coarse Lipschitz equivalent} if $f \preceq g$ and $g \preceq f$,
where $f \preceq g$ means that there is a positive constant  $C$ such
that $f(x) \leq C\,g(Cx) + Cx$ for all $x\geq 0$.) If we wish to specify
$\delta^{(n)}(x)$ exactly, we may denote it as $\delta^{(n)}_X(x)$. 

Taking $n=1$ yields the usual Dehn function $\delta(x)$ of a group $G$.

\subsection*{The strong Dehn function}
The notion of $n$-dimensional Dehn function was modified in \cite{bbfs}
to allow fillings by compact manifolds other than the ball $B^{n+1}$. In
this way, every compact manifold pair $(M, \partial M)$ gave rise to a
Dehn function $\delta^M(x)$. Several of the main results proved in
\cite{bbfs} had hypotheses and conclusions involving the functions
$\delta^M(x)$ ``for all $n$-manifolds $M$.'' An equivalent way of
formulating these results is by means of the \emph{strong Dehn function},
defined as follows. 

Given a compact $(n+1)$-manifold $M$ and an admissible map $f \co
\partial M \to \widetilde{X}$, define 
\[\FVol^M  (f) \ = \ \min \{\, \Vol^{n+1} (g) \mid g \co M \to
\widetilde{X} \text{ admissible}, \ g \vert_{\partial M} = f\, \}\]
and 
\begin{align*}
\Delta^{(n)}(x) \ = \ \sup \{ \, \FVol^M(f) \mid \ &(M,\partial M) \text{
is a compact $(n+1)$-manifold}, \\
&\quad \quad f\co \partial M \to \widetilde{X} \text{ admissible}, \
\Vol^n(f) \leq x \, \}.
\end{align*} 
We call $\Delta^{(n)}(x)$ the \emph{strong $n$-dimensional Dehn function}
of $X$. Note that the manifolds $M$ appearing in the definition are not 
assumed to be connected. The statement $\Delta^{(n)}(x) \leq y$ means 
that for every compact manifold $(M, \partial M)$ and every admissible
map $f \co \partial M \to \widetilde{X}$ of volume at most $x$, there is
an admissible extension to $M$ of volume at most $y$. In particular, the
bound $y$ is uniform for all topological types of fillings (hence the
word ``strong''). Note that this is very different from
\emph{homological} Dehn functions, where only a single filling by an
$(n+1)$-cycle is needed, of some topological type. 

The strong Dehn function has two principal features. The first is that it
behaves well with respect to splittings and mapping torus constructions
(as does the homological Dehn function). The next two theorems below are
examples of this phenomenon. The second is that it (clearly) satisfies 
\begin{equation}\label{upperstrong}
\delta^{(n)}(x) \ \leq \ \Delta^{(n)}(x)
\end{equation}
and hence it may be used to establish upper bounds for
$\delta^{(n)}(x)$. To this end, the following two theorems are proved in
\cite{bbfs} (Theorems 7.2 and 8.1). 

\begin{theorem}[Stability for Upper Bounds]\label{suspension}
Let $X$ be a finite aspherical CW complex of dimension at most $n+1$. Let
$f \co X \to X$ be a $\pi_1$-injective 
map and let $Y$ be the mapping
torus of $X$ using $f$. Then $\Delta^{(n+1)}_Y(x) \leq
\Delta^{(n)}_X(x)$. 
\end{theorem}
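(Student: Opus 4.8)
The plan is to present $Y$ concretely as a mapping torus and exploit the resulting \emph{height function}, reducing the filling of an $(n+1)$-sphere in $\widetilde{Y}$ to filling problems inside single fibres, each a copy of $\widetilde{X}$, where $\Delta^{(n)}_X$ does the work. Fix a cellular representative of $f$ and give $Y$ the cell structure with one cell for each cell of $X$ (the \emph{horizontal} cells) together with one cell $e\times I$ for each cell $e$ of $X$ (the \emph{vertical} cells); since $f$ is $\pi_1$-injective and $X$ is aspherical, $Y$ is aspherical of dimension at most $n+2$. The projection $Y\to S^1$ lifts to a height function $h\co\widetilde{Y}\to\R$ with two features I would establish first: (i) for generic $t$ (say $t\in\tfrac12+\Z$) the fibre $F_t=h^{-1}(t)$ is a disjoint union of copies of $\widetilde{X}$, carrying the cell structure of $\widetilde{X}$, and every $(n+1)$-cell of $Y$ met by $F_t$ is vertical and meets $F_t$ in exactly one $n$-cell; (ii) each slab $S=h^{-1}([a,b])$ with $[a,b]$ containing a single integer is a disjoint union of mapping cylinders of lifts of $f$ — so it deformation retracts onto its top face $F_b$ — and the retraction moves each $(n+1)$-cell of $S$ a bounded height, hence across a bounded number of $(n+2)$-cells.

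Now take a compact $(n+2)$-manifold $(W,\partial W)$ and an admissible $\phi\co\partial W\to\widetilde{Y}$ with $\Vol^{n+1}(\phi)\le x$, made transverse. Extend $h\circ\phi$ to a generic map $\widehat h\co W\to\R$, and let $t_1<\dots<t_m$ be consecutive half-integers exhausting the height range of $\phi(\partial W)$. Put $\gamma_i=(h\phi)^{-1}(t_i)\subseteq\partial W$ and $V_i=\widehat h^{-1}(t_i)\subseteq W$: these are compact submanifolds with $\partial V_i=\gamma_i$, the $V_i$ are disjoint, and they cut $W$ into slabs $W_1,\dots,W_{m-1}$ with $\partial W_i=A_i\cup V_i\cup V_{i+1}$, where $A_i=\partial W\cap W_i$ and $\widehat h(W_i)\subseteq[t_i,t_{i+1}]$. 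By feature (i), each $\phi|_{\gamma_i}\co\gamma_i\to F_{t_i}$ is admissible and $\sum_i\Vol^n(\phi|_{\gamma_i})\le\Vol^{n+1}(\phi)\le x$.

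The filling then proceeds as follows. Let $\Gamma=\coprod_i\gamma_i$, a closed $n$-manifold mapping admissibly to $\widetilde{X}$ with volume at most $x$, and let $M=\coprod_i V_i$, a compact $(n+1)$-manifold with $\partial M=\Gamma$. By the definition of the strong Dehn function there is an admissible extension $\bar g\co M\to\widetilde{X}$ with $\Vol^{n+1}(\bar g)\le\Delta^{(n)}_X(x)$ (made admissible via \cite[Lemma~2.3]{bbfs}); set $\Phi=\bar g$ on each $V_i$ (landing in $F_{t_i}\subseteq\widetilde{Y}$) and $\Phi=\phi$ on $\partial W$. To extend $\Phi$ over a slab $W_i$ with image in $S_i=h^{-1}([t_i,t_{i+1}])$, push $\Phi|_{\partial W_i}$ up to the top face $F_{t_{i+1}}$ using the retraction of feature (ii), then fill the resulting map over $W_i$ into the $(n+1)$-dimensional complex $F_{t_{i+1}}$, which adds no $(n+2)$-volume. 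The $(n+2)$-volume of this extension equals that of the push-up homotopy, which moves each $(n+1)$-cell of $\phi|_{A_i}$ and of $\Phi|_{V_i}$ across a bounded number of $(n+2)$-cells; summing over $i$ (each cell of $\phi$ counted a bounded number of times) gives $\Vol^{n+2}(\Phi)\le C\bigl(\Vol^{n+1}(\phi)+\sum_i\Vol^{n+1}(\Phi|_{V_i})\bigr)\le C\bigl(x+\Delta^{(n)}_X(x)\bigr)$ for a constant $C$ depending only on $X$ and $f$. Since all Dehn functions here are taken up to coarse Lipschitz equivalence, this yields $\Delta^{(n+1)}_Y(x)\le\Delta^{(n)}_X(x)$.

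The main obstacle — and the reason the \emph{strong} Dehn function must appear on the right — is the simultaneous filling of the slice manifolds $\gamma_i$: one fills the disjoint union $\coprod\gamma_i$ by the disjoint union $\coprod V_i$ in one stroke, so the total volume is bounded by $\Delta^{(n)}_X$ applied to the \emph{total} slice volume $\le x$, which is exactly the telescoping that $\delta^{(n)}_X$ would not supply. The remaining technical work is to nail down features (i) and (ii) — principally identifying a slab with a union of mapping cylinders of lifts of $f$, so that it retracts onto a fibre and the retraction is cellularly controlled — and to keep $\widehat h$ generic so that all the $V_i$, $\gamma_i$ are manifolds and the restrictions $\phi|_{\gamma_i}$ are admissible. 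The dimension hypothesis $\dim X\le n+1$ enters twice: it makes $\dim Y\le n+2$, and it guarantees that the maps $\phi|_{\gamma_i}\co\gamma_i\to\widetilde{X}$ extend over the $(n+1)$-manifolds $V_i$, the only obstruction being $\pi_n(\widetilde{X})=0$.
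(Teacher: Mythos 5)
Your overall strategy -- slice the filling manifold along preimages of fibres, fill the disjoint union of slice manifolds inside $\widetilde{X}$ using the strong Dehn function (which is exactly why $\Delta^{(n)}_X$ rather than $\delta^{(n)}_X$ must appear), and then extend over the slabs at essentially no $(n+2)$-cost because the fibres are $(n+1)$-dimensional and contractible -- is the right one, and is in the spirit of the argument in \cite{bbfs} (the present paper only quotes Theorem 7.2 of \cite{bbfs}; it contains no proof to compare line by line). But as written there is a genuine gap, caused by treating $\widetilde{Y}$ as fibred over $\R$. When $f$ is $\pi_1$-injective but not a $\pi_1$-isomorphism (for instance the paper's own groups $G_A$, where $[\pi_1 X : f_*\pi_1 X] = d \geq 2$), $\widetilde{Y}$ is a tree of spaces over the Bass--Serre tree $T$, which branches: the fibre $F_t$ over a half-integer $t$ is a disjoint union of many parallel copies of $\widetilde{X}$ lying over different edges of $T$ at the same height. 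Your slicing of $W$ uses only a real-valued extension $\widehat{h}$ of $h\circ\phi$, and this does not separate those parallel copies: a single connected component $V$ of $V_i=\widehat{h}^{-1}(t_i)$ can have boundary components carried by $\phi$ into \emph{different} components of $F_{t_i}$. For such a $V$ there is no map to $F_{t_i}$ at all extending $\phi\vert_{\partial V}$ (connected domain, prescribed boundary image in distinct components), so the step ``set $\Phi=\bar g$ on $V_i$, landing in $F_{t_i}$'' cannot be carried out, and with it the slab-by-slab volume control collapses. The same branching falsifies your feature (ii): a slab component is a union of mapping cylinders that deformation retracts onto its integer-level (vertex) fibre, not onto its top face, which is disconnected ($d$ copies of $\widetilde{X}$) when $d\geq 2$.

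Both problems disappear if you slice over the tree instead of over $\R$: extend the composite $\partial W \to \widetilde{Y} \to T$ over $W$ (possible since $T$ is contractible), make it transverse to the midpoints of edges, and let the slice manifolds be the preimages of those midpoints. Then each component of each slice is labelled by a single edge of $T$, its boundary maps into the one corresponding copy of $\widetilde{X}$, so the strong-Dehn-function filling of the disjoint union of slices can be re-embedded in $\widetilde{Y}$; the complementary pieces of $W$ have boundary mapping into stars of vertices, which do deformation retract onto the contractible, $(n+1)$-dimensional vertex fibres, and your push-then-fill argument goes through. Two smaller points: if you push onto vertex fibres, the only $(n+2)$-volume created comes from sweeping the horizontal top-dimensional cells of the slice fillings across vertical $(n+2)$-cells (tracks of vertical cells are degenerate), which is how one gets a bound by the slice-filling volume itself rather than your $C\bigl(x+\Delta^{(n)}_X(x)\bigr)$; and as written your conclusion is only the coarse-equivalence version of the stated inequality $\Delta^{(n+1)}_Y(x)\leq\Delta^{(n)}_X(x)$, which is weaker than the theorem, although sufficient for the way it is used in this paper.
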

Thus, any upper bound for $\Delta^{(n)}_X(x)$ remains an upper bound for
$\Delta^{(n+1)}_Y(x)$. 
A similar result holds more generally (with the same proof) if $Y$ is the
total space of a graph of spaces whose vertex and edge spaces satisfy the
hypotheses of $X$. Then the conclusion is that $\Delta^{(n+1)}_Y(x) \leq
C \, \Delta^{(n)}_X(x)$ for some $C > 0$. 

The next result provides a better bound in a special case. 

\begin{theorem}[Products with $S^1$] \label{product}
Let $X$ be a finite aspherical CW complex of dimension at most $n+1$. If
$\Delta^{(n)}_X(x) \leq Cx^s$ for some $C>0$ and $s \geq 1$ then
$\Delta^{(n+1)}_{X \times S^1}(x) \leq C^{1/s} x^{2-1/s}$. 
\end{theorem}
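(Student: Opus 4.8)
The plan is to reduce the filling of an arbitrary compact $(n+2)$-manifold in $\widetilde{X}\times\R$ to fillings of $n$-dimensional level sets inside $\widetilde{X}$, by slicing along the $\R$-coordinate, and then to optimize the resolution of the slicing so as to recover both the exponent $2-1/s$ and the constant $C^{1/s}$. Concretely, let $(M,\partial M)$ be a compact $(n+2)$-manifold and $f\co\partial M\to\widetilde{X}\times\R$ an admissible map with $\Vol^{n+1}(f)\leq x$, where $\widetilde{X}\times\R$ is the universal cover of $X\times S^{1}$ with the product cell structure. After making $f$ transverse, the height function $h=\pi_{\R}\circ f$ is combinatorially Lipschitz, and the level volume function $v_{t}:=\Vol^{n}(f|_{h^{-1}(t)})$ (for regular $t$) is a step function with $\int_{\R}v_{t}\,dt\leq\Vol^{n+1}(f)\leq x$. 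I would extend $h$ to a transverse $h'\co M\to\R$, choose regular values $t_{0}<\dots<t_{k}$ covering the range (the number $k$ and the spacing to be fixed later), and set $\Sigma_{i}=h^{-1}(t_{i})$ (closed $n$-manifolds in $\partial M$), $T_{i}=(h')^{-1}(t_{i})$ (closed $(n+1)$-manifolds in $M$), $P_{i}=h^{-1}([t_{i-1},t_{i}])$, and $M_{i}=(h')^{-1}([t_{i-1},t_{i}])$, so that $\partial M_{i}=T_{i-1}\cup P_{i}\cup T_{i}$, glued along the corner manifolds $\Sigma_{i-1},\Sigma_{i}$.

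To build the filling, first note that each $\Sigma_{i}$ bounds inside $\partial M$, so the shadow $\pi_{\widetilde{X}}\circ f|_{\Sigma_{i}}\co\Sigma_{i}\to\widetilde{X}$ is a boundary-filling problem for the \emph{specific} manifold $T_{i}$; by the definition of the strong Dehn function and the hypothesis it has an admissible solution $\bar{g}_{i}\co T_{i}\to\widetilde{X}$ with $\Vol^{n+1}(\bar{g}_{i})\leq\Delta^{(n)}_{X}(v_{i})\leq Cv_{i}^{s}$, which I install as a "wall" at height $t_{i}$. On each slab $M_{i}$ the resulting boundary data ($f|_{P_{i}}$ together with the two walls) lies in the thin region $\widetilde{X}\times[t_{i-1},t_{i}]$; I would fill it by sweeping it down into the slice $\widetilde{X}\times\{t_{i-1}\}$ and then extending over $M_{i}$ by a map into $\widetilde{X}$, which contributes zero combinatorial volume since $\dim\widetilde{X}\leq n+1$. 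Assembling the slabs and making the result admissible without increasing volume produces $g\co M\to\widetilde{X}\times\R$ extending $f$, whose volume is bounded by a sum over slabs of (slab thickness) times ($\Vol^{n+1}$ of its boundary data); organizing this yields, up to constants, a bound of the shape $\int v_{t}^{s}\,dt$ plus a lower-order "sweeping" cost of order $x^{2}/k$. The complementary move, used when $f$ has small $\R$-extent $D$, is to skip the slicing entirely and push all of $\partial M$ into a single slice, at cost on the order of $x\cdot D$, then extend flatly into $\widetilde{X}$.

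For the optimization one balances the sweeping cost (decreasing in the number of slices) against the wall cost coming from $\int v_{t}^{s}\,dt$ (controlled by $\int v_{t}\,dt\leq x$ via a Hölder-type estimate and the placement of the levels), and — using the dichotomy between small and large $\R$-extent — this balances at roughly $k\approx(x/C)^{1/s}$ to give $C^{1/s}x^{2-1/s}$; the degenerate case $s=1$, where the coarsest slicing already gives a linear bound, is handled separately. The step I expect to be the genuine obstacle is making all of these volume estimates rigorous within the combinatorial–Riemannian dictionary: choosing the slicing heights so that the $t_{i}$ are simultaneously regular for the pair $(M,\partial M)$, avoid the measure-zero bad set, and obey the required coarea and Hölder bounds on the $v_{i}$; verifying that the sweep homotopies and the $\widetilde{X}$-valued extensions can genuinely be realized by admissible maps of the claimed combinatorial volumes; and checking that reassembling the slabs introduces no uncontrolled volume. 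Everything else is obstruction-theoretic routine — fillings into the contractible complex $\widetilde{X}$ always exist — together with a one-variable optimization.
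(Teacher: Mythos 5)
This theorem is not proved in the paper at all: it is quoted from \cite{bbfs} (Theorem 8.1), so the only comparison available is with that source, and your plan is in essence the argument given there --- slice $\widetilde{X}\times\R$ along the $\R$-coordinate, fill each level set of the boundary map over the \emph{specific} slice manifold $(h')^{-1}(t_i)$ supplied by transversality (this is precisely why the strong Dehn function is the right hypothesis), push each band into a single copy of $\widetilde{X}$, which is $(n+1)$-dimensional and hence contributes nothing in top-dimensional volume, and optimize the band width, which balances at roughly $C^{1/s}x^{1-1/s}$ and gives the exponent $2-1/s$. A few of your intermediate assertions need repair rather than mere elaboration. The wall cost is not $\int v_t^s\,dt$, and no H\"older-type inequality bounds $\int v_t^s$ by $\int v_t$ (Jensen goes the other way); what you actually need is the pigeonhole choice, in each band of width $T$, of a level whose $n$-volume is at most $1/T$ times that band's volume, followed by the superadditivity $\sum_i a_i^s \le \bigl(\sum_i a_i\bigr)^s$ for $s\ge 1$. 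Transverse level sets sit over interior points of edges of $\R$, so ``regularity'' is automatic, but your walls then map to $\widetilde{X}\times\{t\}$ with $t$ non-integral, which is not cellular; the walls and the band data must be pushed to vertex levels, and it is this pushing, done in a collar of $\partial M_i$ so that $f$ itself is not altered, that realizes the (volume)$\times$(width) sweeping cost --- note also that $(h')^{-1}(t_i)$ is compact with boundary $\Sigma_i$, not closed. Your accounting produces a bounded multiple of $C^{1/s}x^{2-1/s}$ rather than the literal constant $C^{1/s}$; that is harmless for the induction in the proof of Theorem \ref{mainthm2}, which only needs some constant, but it falls slightly short of the statement as quoted, whose sharper constant requires more careful bookkeeping in \cite{bbfs}. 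Finally, the worry about a combinatorial--Riemannian dictionary is misplaced: the statement is purely combinatorial, the coarea bound $\int v_t\,dt\le \Vol^{n+1}(f)$ is just the count of vertical cells, and the $\R$-extent of each boundary component is at most its volume, which is what legitimizes your $x^2/k$ estimate.
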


It turns out that for $n \geq 3$ and for $n=1$, there is no significant
difference between the strong and ordinary Dehn functions. The precise
relation between them is stated in the next theorem, which was
essentially proved already in Remark 2.5(4) and Lemma 7.4 of
\cite{bbfs}. 

However, we do indeed need to work specifically with the strong Dehn
function in dimension $2$, since we wish to apply Theorem \ref{product}
above. This case forms the base of the induction argument we use to show
that $\IP^{(n)}$ is dense for all $n \geq 2$. 

A function $f \co \N \to \N$ is \emph{superadditive} if $f(a) + f(b) \leq
f(a+b)$ for all $a,b \in \N$. The \emph{superadditive closure} of $f$ is
the smallest superadditive $g$ such that $f(x) \leq g(x)$ for all $x$. 
An explicit recursive definition of $g$ is given by 
\[ g(0) \ = \ f(0), \quad g(x) \ = \ \max \bigl\{ \{g(i) + g(x-i) \mid i
= 1, \ldots, x-1\} \cup \{ g(0) + f(x)\} \bigr\}.\]
It is easy to verify that $\Delta^{(n)}(x)$ is always superadditive, by
considering fillings by non-connected manifolds. 

\begin{theorem}
$\Delta^{(n)}_X(x)$ is the superadditive closure of $\delta^{(n)}_X(x)$
for $n \geq 3$ and for $n=1$.  
\end{theorem}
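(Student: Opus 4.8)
Write $h$ for the superadditive closure of $\delta^{(n)}_X$. The claim is the conjunction of the two pointwise inequalities $h(x) \leq \Delta^{(n)}_X(x)$ and $\Delta^{(n)}_X(x) \leq h(x)$. The first is immediate from what has already been recorded: $\delta^{(n)}_X(x) \leq \Delta^{(n)}_X(x)$ by \eqref{upperstrong}, and $\Delta^{(n)}_X$ is superadditive (by filling disjoint unions of manifolds). Thus $\Delta^{(n)}_X$ is a superadditive function lying pointwise above $\delta^{(n)}_X$, hence above the least such, namely $h$.

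For the reverse inequality, fix a compact $(n+1)$-manifold $M$ (not necessarily connected) and an admissible map $f \co \partial M \to \widetilde{X}$ with $\Vol^n(f) \leq x$; the goal is an admissible extension $g \co M \to \widetilde{X}$ with $\Vol^{n+1}(g) \leq h(x)$. First reduce to $M$ connected: if $M = \coprod_\alpha M_\alpha$, fill the pieces separately, and use that $\Vol^n$ is additive over the components of $\partial M$ while $h$ is superadditive, so that $\sum_\alpha h\bigl(\Vol^n(f|_{\partial M_\alpha})\bigr) \leq h\bigl(\sum_\alpha \Vol^n(f|_{\partial M_\alpha})\bigr) \leq h(x)$. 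This is precisely where the superadditive \emph{closure} is forced into the statement. Now assume $M$ connected and triangulate it, subdividing so that every $0$-handle of a transverse representative of $f$ lies in the interior of a single top-dimensional simplex of the induced triangulation of $\partial M$; then $\partial M$ maps its $(n-1)$-skeleton into $\widetilde{X}^{(n-1)}$. View $M$ as the union of its closed top simplices, each a copy of $B^{n+1}$, glued along the $n$-skeleton $K \supseteq \partial M$. Suppose we can choose an admissible $\bar f \co K \to \widetilde{X}$ restricting to $f$ on $\partial M$ and sending every interior $n$-cell of $K$ into $\widetilde{X}^{(n-1)}$. Then for each top simplex $B$ fill $\bar f|_{\partial B} \co S^n \to \widetilde{X}$ by an admissible map $B \to \widetilde{X}$ of volume at most $\delta^{(n)}_X\bigl(\Vol^n(\bar f|_{\partial B})\bigr)$; because all of these fillings have their $0$-handles interior to the respective simplices (the gluing faces map into $\widetilde{X}^{(n-1)}$), they glue to an admissible map $g$, and
\[ \Vol^{n+1}(g) \ \leq \ \sum_B \delta^{(n)}_X\bigl(\Vol^n(\bar f|_{\partial B})\bigr) \ \leq \ h\Bigl(\textstyle\sum_B \Vol^n(\bar f|_{\partial B})\Bigr). \]
Since each interior $n$-cell of $K$ lies in two top simplices and each $n$-cell of $\partial M$ in one, $\sum_B \Vol^n(\bar f|_{\partial B}) = 2\,\Vol^n\bigl(\bar f|_{K \setminus \partial M}\bigr) + \Vol^n(f) = \Vol^n(f) \leq x$ by the choice of $\bar f$; hence $\Vol^{n+1}(g) \leq h(x)$, as required.

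The main obstacle is the existence of such a $\bar f$: one must extend $f|_{\partial M}$ over the interior of $K$ without covering any $n$-cell of $\widetilde{X}$ — equivalently, one must choose the map on the interior $(n-1)$-skeleton so that every interior $n$-cell acquires null-homotopic boundary in $\widetilde{X}^{(n-1)}$. The obstructions to this live in (co)homology of $M$ rel $\partial M$ with coefficients in the homotopy groups $\pi_i(\widetilde{X}^{(i)})$, $i \leq n-1$; although these classes are stably trivial, clearing them over a complex of dimension $n$ (after a further subdivision of $M$) requires room that is available only when $n-1 \geq 2$. This is exactly why the hypothesis excludes $n = 2$: in that dimension the construction genuinely fails and $\Delta^{(2)}_X$ is in general strictly larger than the superadditive closure of $\delta^{(2)}_X$, which is the reason the present paper must estimate $\Delta^{(2)}$ directly in order to invoke Theorem \ref{product}. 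The case $n = 1$ is the classical instance of the same scheme: cut a compact surface into disks, map the cut locus to a point (covering no edges), fill each disk using $\delta^{(1)}_X$, and assemble by superadditivity. Carried out carefully, this reproduces in self-contained form the argument indicated in Remark~2.5(4) and Lemma~7.4 of \cite{bbfs}.
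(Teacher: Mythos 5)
The easy half of your argument (superadditivity of $\Delta^{(n)}$ plus \eqref{upperstrong} gives $h \leq \Delta^{(n)}$) is fine and is the same as the paper's. The problem is the other half: the entire weight of your proof rests on the existence of the map $\bar f \co K \to \widetilde{X}$ agreeing with $f$ on $\partial M$ and sending every interior $n$-simplex into $\widetilde{X}^{(n-1)}$, and you do not prove it — you only gesture at obstruction theory and assert that the obstructions can be cleared when $n-1 \geq 2$. In fact no such argument can exist, because the required $\bar f$ need not exist at all. Since every interior $(n-1)$-simplex is a face of an interior $n$-simplex, your condition forces the whole interior part of $K$ into $Y = \widetilde{X}^{(n-1)}$, so you are asking to extend $f$ (rel $\partial M$) over the interior $n$-skeleton \emph{into} $Y$. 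The target $Y$ is $(n-2)$-connected, so the complete obstruction is the primary obstruction in $H^n(M, \partial M; \pi_{n-1}(Y))$, and when $f(\partial M) \subset Y$ this class is $\delta\bigl(f^*\iota\bigr)$, $\iota \in H^{n-1}(Y;\pi_{n-1}(Y))$ the fundamental class. Take $n \geq 3$, $M = S^1 \times D^n$, and $f \co S^1 \times S^{n-1} \to Y$ the projection to $S^{n-1}$ followed by a sphere representing a nonzero element of $\pi_{n-1}(\widetilde{X}^{(n-1)})$ (such elements exist for typical $X$, e.g.\ the boundary of an essential $n$-cell). Then $f$ is admissible of volume $0$, but $H^{n-1}(M;\pi) = 0$ forces $\delta$ to be injective, so the obstruction is nonzero: no triangulation, subdivision, or indeed any decomposition of $M$ into balls glued along volume-free walls admits your $\bar f$, even though the theorem (correctly) says this $f$ fills with zero volume. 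So the failure is not confined to $n=2$; the dimension count ``$n-1\geq 2$'' does not clear these classes, and the cohomology of $M$ rel $\partial M$ genuinely intervenes. (A smaller point: your reduction ``subdivide so that every $0$-handle lies in the interior of a top simplex of $\partial M$'' is backwards — subdivision shrinks simplices; one must instead shrink the $0$-handles by an ambient isotopy or choose the triangulation adapted to $f$.)

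The paper's proof avoids this trap by paying a volume price instead of demanding a volume-free interior skeleton: for $n \geq 3$, each boundary component $N_i$ of volume $x_i$ is admissibly homotoped into $\widetilde{X}^{(n-1)}$ by a homotopy of volume at most $\delta^{(n)}(x_i)$ (the argument of Remark~2.5(4) of \cite{bbfs} — this is where $n\geq 3$ is genuinely used), and then the resulting boundary map is filled over all of $M$ with \emph{zero} additional volume because $\widetilde{X}^{(n-1)}$ is contractible inside $\widetilde{X}^{(n)}$; the topology of $M$, and any obstruction class like the one above, becomes irrelevant. For $n=1$ one fills each boundary circle by a disk as in Lemma~7.4 of \cite{bbfs}. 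If you want a self-contained proof, the missing ingredient you must supply is precisely that push-into-the-skeleton lemma with its $\delta^{(n)}$ volume bound, not an unobstructed extension.
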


It is not known whether there exist groups $G$ for which
$\delta^{(n)}_G(x)$ is not superadditive (up to coarse Lipschitz
equivalence). Indeed, when $n=1$, Sapir has conjectured that this does
not occur \cite{gs}. So in all known examples,
$\Delta^{(n)}$ and $\delta^{(n)}$ agree (for $n \geq 3$ or $n=1$). 

In contrast, 
Young \cite{young} has shown that the statement of the theorem is
false when $n=2$. Specifically, he shows that for a certain group $G$,
the strong Dehn function $\Delta^{(2)}_G(x)$ is not bounded by a
recursive function, whereas $\delta^{(2)}_G(x)$ always satisfies such a
bound, by Papasoglu \cite{papa}. The superadditive closure will inherit
this property, since it is computable from $\delta^{(2)}_G(x)$. 

\begin{proof}
Let $s(x)$ be the superadditive closure of $\delta^{(n)}(x)$. 

If $n=1$ then the proof of Lemma 7.4 of \cite{bbfs} shows directly that
for any compact $2$-manifold $M$, one has $\delta^M(x) \leq \delta^{D^2 \sqcup
\cdots \sqcup D^2}(x)$, where the number of disks equals the number of
boundary components of $M$. For each admissible $f \co S^1 \sqcup \cdots
\sqcup S^1 \to X$ with length $x = \sum_i x_i$ we have $\FVol^{D^2 \sqcup
\cdots \sqcup D^2}(f) \leq \sum_i \delta^{(1)}(x_i) \leq s(x)$, and so
$\delta^M(x) \leq s(x)$. Therefore $\Delta^{(1)}(x) \leq s(x)$. Since
$\Delta^{(1)}(x)$ is superadditive and $\delta^{(1)}(x) \leq 
\Delta^{(1)}(x)$, it follows that $\Delta^{(1)}(x) = s(x)$. 

If $n \geq 3$ then the argument given in Remark 2.5(4) of \cite{bbfs}
applies. Let $\{N_i\}$ be the components of $\partial M$ and suppose that
$g_i \co N_i \to X$ are admissible maps of volume $x_i$, with union $g
\co \partial M \to X$ of volume $x = \sum_i x_i$. By the argument given
in \cite{bbfs}, for each $i$ there is an admissible homotopy of
$(n+1)$-dimensional volume at most $\delta^{(n)}(x_i)$ to an admissible
map $g_i' \co N_i \to X$ with image inside $X^{(n-1)}$. The union of
these maps can be filled by a map $M \to X^{(n)}$, since $X^{(n-1)}$ is
contractible inside $X^{(n)}$. This filling has zero $(n+1)$-dimensional
volume, and hence $\FVol^M(g) \leq \sum_i \delta^{(n)}(x_i) \leq
s(x)$. Since $M$ and $g$ were arbitrary, we have $\Delta^{(n)}(x) \leq
s(x)$, and hence $\Delta^{(n)}(x) = s(x)$. 
\end{proof}

\begin{remark}[Lower bounds] \label{embedded}
As noted earlier, the strong Dehn function can be used to bound
$\delta^{(n)}(x)$ from 
above. For a lower bound one needs explicit information about
$\FVol(f)$ for admissible maps $f \co S^n \to \widetilde{X}$. That is,
one needs to identify \emph{least-volume} extensions $g \co B^{n+1} \to
\widetilde{X}$. Suppose $\dim \widetilde{X} = n+1$ and
$H_{n+1}(\widetilde{X};\Z) =0$. Then a simple homological argument,
sketched in Remarks 2.2 and 2.6 of \cite{bbfs}, shows that $g$ is
least-volume if $g$ is injective on the interiors of $0$-handles (i.e. no
two $0$-handles map to the same cell of $\widetilde{X}$). 
For convenience we provide the full argument here. 

Let $C_{n+1}(\widetilde{X})$ be the cellular chain group for
$\widetilde{X}$. Given an oriented manifold $M^{n+1}$ and a transverse
map $f \co M^{n+1} \to \widetilde{X}$, there is a chain
$[f] \in C_{n+1}(\widetilde{X})$ defined 
as follows. For each $(n+1)$-cell $e_{\alpha}$, let $\sigma_{\alpha}$ be
the corresponding generator of $C_{n+1}(\widetilde{X})$ 
and define $d_{\alpha}(f)$ to be the local degree of $f$ at
$e_{\alpha}$ (i.e. the number of $0$-handles of $f$ mapping to
$e_{\alpha}$, counted with respect to orientations). We define 
$[f] = \sum_{\alpha} d_{\alpha}(f) \sigma_{\alpha}$. Note that the
boundary of $[f]$ in $C_n(\widetilde{X})$ is simply $[f \vert_{\partial
M}]$. (Here the transversality structure is used: $0$-handles in
$\partial M$ are joined to $0$-handles in $M$ by $1$-handles, compatibly
with boundaries of characteristic maps of cells in $\widetilde{X}$.) 

Now suppose that $g \co B^{n+1} \to \widetilde{X}$ is injective on
$0$-handles, and $h \co B^{n+1} \to \widetilde{X}$ is another transverse
map with $h\vert_{S^n} = g \vert_{S^n}$. These maps together define a
transverse map $g -h \co S^{n+1} \to \widetilde{X}$ by considering
$S^{n+1}$ as a union of two balls, with the orientation on one of the
balls reversed. We have $[ g -h] = [g] - [h]$ in
$C_{n+1}(\widetilde{X})$, and so $\partial [g -h] = \partial[g]
- \partial [h] = 0$, and $[g -h]$ is a cycle. Since
$H_{n+1}(\widetilde{X}) = 0$ and $C_{n+2}(\widetilde{X}) = 0$, 
this cycle must be zero in $C_{n+1}(\widetilde{X})$. That is, $g -h$ has
zero local degree at every $(n+1)$-cell. Hence ${d_{\alpha}(g)} =
{d_{\alpha}(h)}$ for all $\alpha$. 

The injectivity assumption on $g$ implies that $\Vol^{n+1}(g) =
\sum_{\alpha} \abs{d_{\alpha}(g)}$. Then we have
\[ \Vol^{n+1}(h) \ \geq \ \sum_{\alpha} \abs{d_{\alpha}(h)} 
\ = \ \sum_{\alpha} \abs{d_{\alpha}(g)} \ = \ \Vol^{n+1}(g),\] 
and hence $g$ is least-volume. 
\end{remark}

\section{The groups $G_A$ and their model spaces} 

\subsection*{The model manifold $M$} 
Let $M$ be the manifold $\R^3$ with the metric $ds^2 = \la^{-2z}dx^2 +
\mu^{-2z}dy^2 + dz^2$, where $\la > 1$, $\mu < 1$, and $\la \mu >1$. This
is the left-invariant metric for the solvable Lie group $\R^2 \rtimes
\R$, with $z\in \R$ acting on $\R^2$ by the matrix $\big( \,
{}^{\lambda^z}_0 \ {}^0_{\mu^z}  \big)$. The geometry of $M$ has much in
common with that of \textsc{Sol} (the case $\lambda\mu = 1$), but with
some important differences. 



\subsection*{The group $G_A$ and its model space $X$}\label{3cx}
Let $A \in M_2(\Z)$ be a hyperbolic matrix with eigenvalues $\lambda >
1$ and $\mu <  1$ and determinant $d = \lambda \mu > 1$. Let $B\in
GL_2(\R)$ diagonalize $A$, so that $B A B^{-1} = \big( \, {}^{\lambda}_0 \
{}^0_{\mu} \, \big)$. Call this diagonal matrix $D$. Then $D$ 
preserves the lattice $\Gamma \subset \R^2$, defined to be the
image of $\Z \times \Z$ under $B$. 

Let $G_A$ be the ascending HNN extension of $\Z \times \Z$ with
monodromy $A$. That is, 
\[ G_A \ = \ \langle \, \Z\times \Z, t \mid t v t^{-1} = Av \ \text{ for
all } v \in \Z \times \Z \, \rangle.\] 
The matrix $B$ defines an isomorphism from $G_A$
to the (non-discrete) subgroup of $\R^2 \rtimes \R$ generated by $\Gamma$
and $1 \in \R$ (corresponding to the stable letter $t \in G_A$). 

The groups $G_A$ are the main examples that interest us in this paper;
our chief task will be determining their $2$-dimensional Dehn functions
$\delta^{(2)}(x)$. For this we need to construct a geometric model for
$G_A$. Note that $\R^2 \rtimes \R$ cannot serve as a model since the
subgroup $G_A$ is not discrete. (Indeed, this Lie group is not
quasi-isometric to \emph{any} finitely generated group, by 
\cite{EFW}.) 

Topologically, our model is formed from $T^2 \times I$ by glueing $T^2
\times 0$ to $T^2 \times 1$ by the $d$-fold covering map $T_A \co T^2 \to
T^2$ induced by $A$. To put a piecewise Riemannian metric on this space,
we use the geometry of $M$ as follows. The construction is analogous to
building the standard presentation $2$-complex of a Baumslag-Solitar
group from a ``horobrick'' in the hyperbolic plane \cite{fm}. 

Let $Q \subset \R^2$ be the parallelogram spanned by the generators of
$\Gamma$. Then $Q \times [0,1]$ is a fundamental domain for the action of
$\Gamma$ on $\R^2 \times [0,1] \subset \R^2 \rtimes \R$, with quotient
homeomorphic to $T^2 \times [0,1]$. The isometry $\R^2 \times 0 \to \R^2
\times 1$ given by $(x,y,0) \mapsto (\lambda x, \mu y, 1)$ is
$\Gamma$-equivariant and induces a local isometry $\R^2/ \Gamma
\times 0 \to \R^2/ \Gamma \times 1$. This local isometry agrees
precisely with the map $T_A \co T^2 \to T^2$ under the identification of
$\R^2 / \Gamma$ with $T^2$ induced by $B$. Thus, identifying opposite
sides of $Q \times [0,1]$ to obtain a copy of $T^2 \times [0,1]$, the
glueing $T^2 \times 0 \to T^2 \times 1$ is locally isometric, and the
model for $G_A$ is a piecewise Riemannian space. Call it $X$, and its
universal cover $\widetilde{X}$. 

Figure~\ref{fig:Q} below shows $Q$ and the locally isometric glueing map
for the example $A = \big( \, {}^{4}_1 \ {}^2_{1} \, \big)$. 
The diagonal matrix stretches horizontally and compresses vertically. 
\begin{figure}[ht]
\psfrag{Qx0}{{$Q \times 0$}}
\psfrag{Qx1}{{$Q \times 1$}}
\psfrag{M}{\hspace{-.17in}$\big( \, {}^{\lambda}_0 \  {}^0_{\mu} \, \big)$}
\includegraphics{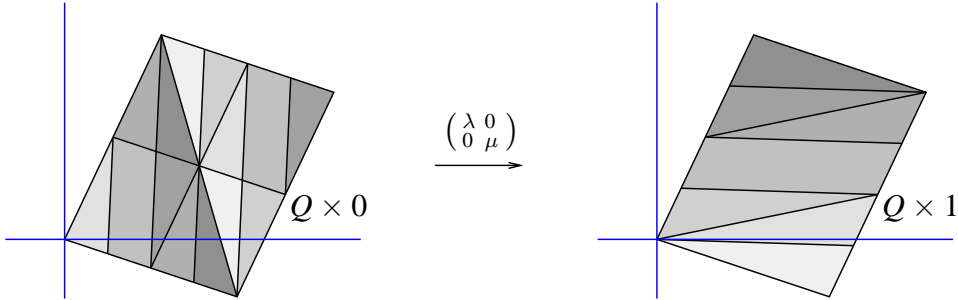}
\caption{The region $Q$ and the glueing map given by the diagonalized
form of $A = \big( \, {}^{4}_1 \ {}^2_{1} \, \big)$. 
Also shown is a cell structure (discussed below) for which this map is
combinatorial.} 
\label{fig:Q}
\end{figure}

\begin{noname}\label{covergeometry}
The cover $\widetilde{X}$ is tiled by isometric copies of $Q \times
[0,1]$, with tiles meeting isometrically along faces. A generic 
point in the top face $Q \times 1$ of a tile meets $d$ tiles in their
bottom faces; side faces are joined in pairs. Topologically, 
$\widetilde{X}$ is a branched space homeomorphic to $\R^2 \times T$,
where $T$ is the Bass-Serre tree corresponding to the splitting of $G_A$
as an ascending HNN extension. The $G_A$-tree $T$ has a fixed end $\eta$
and there is an equivariant map $h_0\co T \to \R$, sending $\eta$ to
$-\infty$ and all other ends to $\infty$, such that the induced
$G_A$-action on $\R$ is by integer translations. The preimage of $\Z$
under this map is the set of vertices of $T$. 

There is a locally isometric surjection $q\co \widetilde{X} \to M$ which,
viewed via the homeomorphisms $\widetilde{X} \cong \R^2 \times T$ and $M \cong
\R^2 \times \R$, is given by the identity on $\R^2$ and the map $h_0\co T\to
\R$ described above. The metric on $\widetilde{X}$ may be viewed as the
pullback metric of $M$ under this map. In particular, for any compact
manifold $W$ and any piecewise smooth map $f \co W \to \widetilde{X}$, we
have $\RVol(f) = \RVol(q \circ f)$. 

If $L \subset T$ is a line mapping homeomorphically to $\R$ under 
$h_0$, then the subspace $\R^2 \times L \subset \widetilde{X}$ is 
isometric to $M$. This situation is completely analogous to that of 
the solvable Baumslag-Solitar groups, whose standard geometric models
contain copies of the hyperbolic plane (cf. \cite{fm}). 

The map $h_0 \co T \to \R$ also defines a \emph{height function} $h \co
\widetilde{X} \to \R$ by composing with the projection $\widetilde{X}
\cong \R^2 \times T \to T$. 
\end{noname}

\subsection*{Cell structure} 
The basic cell structure on $X$ is the usual mapping torus cell
structure, induced by the standard cell decomposition for the torus, but
we will need to modify the attaching maps to make it a transverse CW
complex. 

First, consider $Q \times [0,1]$ combinatorially as a cube and give it
the product cell structure (with eight $0$-cells, twelve $1$-cells, six
$2$-cells, and one $3$-cell). The side-pairings are compatible with this
structure, so we have a cell structure on $T^2 \times [0,1]$. Now
subdivide the top and bottom faces $T^2 \times \{0,1\}$ into finitely
many cells so that $T_A \co T^2 \times 0 \to T^2 \times 1$ maps open
cells homeomorphically to open cells (i.e. $T_A$ becomes a
\emph{combinatorial map}). Note that $T^2 \times 0$ will have $d$ times 
as many $2$-cells as $T^2 \times 1$, since $T_A$ is a $d$-fold covering. 
The pattern of subdivision is obtained by taking intersections of cells
of $T^2 \times 1$ with cells of $T_A(T^2 \times 0)$. See
Figure~\ref{fig:Q} for the example $A = \big( \, {}^{4}_1 \ {}^2_{1} \,
\big)$. Since $T_A$ takes cells to cells, we now have a cell structure on
$X$. 

Next we make the cell structure transverse. In this case, the
transversality procedure does not change the homeomorphism type of $X$,
or even its partition into open cells. Thus, the piecewise Riemannian
metric will still exist, exactly as described, 
with either cell structure. 

Every map $S^0 \to X^{(0)}$ is transverse, so the $1$-skeleton $X^{(1)}$
is already a transverse CW complex. For the $2$-skeleton, note that for
each attaching map $S^1 \to X^{(1)}$ in the original cell structure,
there is a realization of $S^1$ as a graph such that the map is a graph
morphism. To make this map transverse, expand each vertex into a closed
interval (a $1$-handle) to form a slightly larger circle. Let the new
attaching map first collapse these intervals back into vertices, and then
map to $X^{(1)}$ by the original attaching map. We have simply introduced
some ``slack'' at the vertices. The $2$-skeleton and its partition into
open cells has not changed. 

For the attaching map $S^2 \to X^{(2)}$ of the $3$-cell, note again that
$S^2$ has a cell structure for which this map is combinatorial (this is a
property of our particular complex $X$). Expand every $0$-cell into a
small disk (a $2$-handle) and then expand every $1$-cell into a rectangle
(a $1$-handle), to abtain a new copy of $S^2$. The new transverse
attaching map will collapse these new handles to $0$- and $1$-cells and
then map to $X^{(2)}$ as before. See Figure~\ref{fig:cell}.
Again, the 
topology of $X$ is unchanged. (This amounts to a claim that performing
the collapses described above in the boundary of a ball results again in
a ball.) 
\begin{figure}[ht]
\psfrag{ar}{{$\longrightarrow$}}
\includegraphics{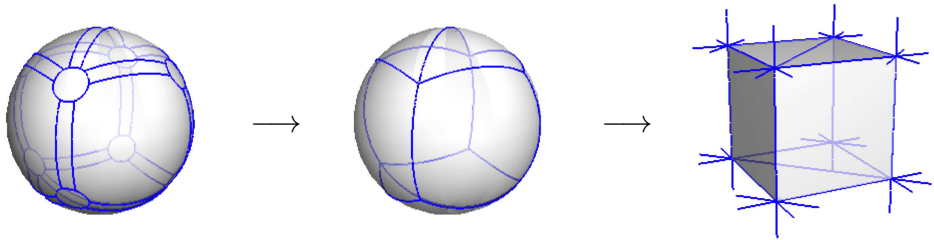}
\caption{Transverse $3$-cell attachment. The rightmost map is the
original attaching map; the composition is the new (transverse) one.} 
\label{fig:cell}
\end{figure}
%
%
%

The universal cover $\widetilde{X}$ is given the induced cell
structure. Note that the closures of the $3$-cells are exactly the copies
of $Q \times [0,1]$ tiling $\widetilde{X}$ mentioned earlier. Also note
that every $2$-cell is either \emph{horizontal} or \emph{vertical}: in
the product $\R^2 \times T$, it either projects to a point in $T$ or to a
line segment in $\R^2$. In the latter case, the projection of the
$2$-cell in $T$ is exactly an edge.

\section{The upper bound}\label{uppersec}

We proceed now to establish an upper bound for the strong Dehn
function $\Delta^{(2)}(x)$ of the group $G_A$. 

Let $W$ be a compact $3$-manifold with boundary and $f \co \partial W
\to \widetilde{X}$ an admissible map, which we may make transverse
without changing its combinatorial area (by a homotopy inside
$\widetilde{X}^{(2)}$, of zero volume). Now let $g\co W \to
\widetilde{X}$ be a transverse extension of $f$ of smallest Riemannian
volume (cf. Remark~\ref{minRvol}). 

We need to measure the combinatorial volume of $g$ and bound it in terms
of the area of $f$. Note that every $0$-handle of $W$ has the same
Riemannian volume, equal to the volume $V$ of the single $3$-cell in
$X$. Thus, to count the $0$-handles, we will instead measure the 
Riemannian volume of $g$ by integration and divide by $V$. It turns out
that the geometry of $\widetilde{X}$ is well-suited to this kind of
measurement. We will also work with the Riemannian area of $f$, but
again the relation to combinatorial area causes no difficulty.

\subsection*{The embedded case} First we discuss a special case in order
to clarify the geometric ideas, before incorporating transverse maps into
the argument. We will assume that $W$ is a subcomplex of $\widetilde{X}$,
with $g$ the inclusion map. 

Since $W$ is a manifold, every $2$-cell of $W$ is either in $\partial W$
or is adjacent to two $3$-cells of $W$. Let $F \subset W$ (the \emph{fold
set}) be the smallest subcomplex whose $2$-cells are the horizontal
$2$-cells $\sigma$ such that $\sigma \not\subset \partial W$ and both
adjacent $3$-cells are \emph{above} $\sigma$ with respect to the height
function $h\co \widetilde{X} \to \R$. (The fold set may be empty, of
course.) 

\begin{proposition}\label{foldbound}
$\RVol(W) \ \leq \ \frac{1}{\ln(\lambda \mu)}(\area(\partial W) + 2
\area(F))$. 
\end{proposition}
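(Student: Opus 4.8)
The plan is to write the Riemannian volume form of $M$ as an exact form, pull a primitive back to $\widetilde X$, and apply Stokes' theorem; the fold set $F$ will show up because $\widetilde X$ is branched and $q|_W$ folds along $F$, forcing us to cut $W$ open there. On $M$, take the orthonormal coframe $e^1 = \la^{-z}\,dx$, $e^2 = \mu^{-z}\,dy$, $e^3 = dz$, so that the volume form is $\omega_M = e^1\wedge e^2\wedge e^3 = (\la\mu)^{-z}\,dx\wedge dy\wedge dz$. A one-line computation gives $\omega_M = d\beta$ for the $2$-form
\[ \beta \ = \ -\tfrac{1}{\ln(\la\mu)}\,(\la\mu)^{-z}\,dx\wedge dy \ = \ -\tfrac{1}{\ln(\la\mu)}\,e^1\wedge e^2, \]
where $\ln(\la\mu) > 0$ since $\la\mu > 1$. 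Let $\widehat\beta = q^*\beta$ on $\widetilde X$, where $q\co\widetilde X\to M$ is the local isometry of Section~\ref{3cx}. Since $q$ is a local isometry and $e^1\wedge e^2$ restricts to at most the area form on any surface (orthogonal projection to the $xy$-plane does not increase area), we get $\bigl|\int_S\widehat\beta\bigr|\leq\frac{1}{\ln(\la\mu)}\area(S)$ for every $2$-dimensional submanifold $S\subset\widetilde X$, and $d\widehat\beta$ equals the pulled-back volume form wherever $\widehat\beta$ is smooth.

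Next I would record exactly where $q|_W$ fails to be a smooth immersion. Across a vertical $2$-cell, or across a horizontal $2$-cell having a $3$-cell of $W$ both above and below it, the two adjacent tiles fit together isometrically inside $M$, so $q|_W$ is a smooth immersion there and $\widehat\beta$ is smooth. The only interior folding locus is a horizontal $2$-cell both of whose adjacent $3$-cells in $W$ lie above it — precisely the cells of $F$: near such a cell $W$ consists of two half-tiles glued along it, $q$ carries both upward, and this is a genuine fold. Here one uses that, $W$ being a manifold, an interior $2$-cell meets exactly two $3$-cells of $W$, and that below any horizontal cell of $\widetilde X$ there sits a unique tile, so no ``downward'' folds (or vertical folds) occur.

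Now cut $W$ open along $F$ to obtain a compact manifold $\widehat W$ whose boundary is $\partial W$ together with two isometric copies $F^+,F^-$ of $F$. The map $q|_{\widehat W}$ is an immersion into the orientable manifold $M$ (away from a set of codimension $\geq 2$ coming from the $1$-skeleton), hence $\widehat W$ is orientable; orient it by the pullback of the orientation of $M$, so that $q$ is orientation-preserving and $\int_{\widehat W}q^*\omega_M = \RVol(\widehat W) = \RVol(W)$. Stokes' theorem on $\widehat W$ then gives
\[ \RVol(W) \ = \ \int_{\widehat W}d\widehat\beta \ = \ \int_{\partial W}\widehat\beta \ + \ \int_{F^+}\widehat\beta \ + \ \int_{F^-}\widehat\beta, \]
and applying the pointwise bound to each of the three surfaces yields
\[ \RVol(W) \ \leq \ \tfrac{1}{\ln(\la\mu)}\bigl(\area(\partial W) + \area(F^+) + \area(F^-)\bigr) \ = \ \tfrac{1}{\ln(\la\mu)}\bigl(\area(\partial W) + 2\area(F)\bigr). \]
Equivalently, one can run a coarea/slicing argument: $\RVol(W) = \int_{\R}a(z)\,dz$ with $a(z)$ the Riemannian area of the horizontal slice $W\cap h^{-1}(z)$; on each interval between consecutive integers $a(z)$ decays like $(\la\mu)^{-z}$ up to boundary corrections, while at each integer level the branching makes $a$ jump by the area of the horizontal part of $\partial W$ there plus \emph{twice} the area of the fold cells there (each being the bottom face of two tiles above but of no tile below).

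The main obstacle is the local bookkeeping in the middle paragraph: one must pin down how the manifold $W$ sits inside the branched complex $\widetilde X$ — the pass-through/fold dichotomy at horizontal $2$-cells, the absence of vertical and downward folds, and the behaviour along the $1$-skeleton — carefully enough that ``cut along $F$'' is legitimate and Stokes applies (degeneracies along interior $1$-cells are codimension $2$ and contribute nothing in the limit, but this must be checked). This is also the point at which, in the general non-embedded case treated later, one replaces subcomplexes by transverse maps so that the same dissection works for singular fillings.
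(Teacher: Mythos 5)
Your proof is correct, and it takes a genuinely different route from the paper's. The paper does not cut $W$ open or invoke Stokes: it considers the downward flow on $\widetilde X \cong \R^2\times T$ (toward the fixed end $\eta$), observes that every point of $W$ exits either through $\partial W$ or through $F$, and defines a (possibly discontinuous) projection $\pi_-\co W \to \partial W \cup F$; it then integrates the volume element $(\la\mu)^{-z}\,dx\,dy\,dz$ along the fibers of $\pi_-$, using that $\int_0^\infty(\la\mu)^{-z}dz = \tfrac{1}{\ln(\la\mu)}$ times the horizontal area element, that the surface area element of $\partial W$ dominates the horizontal one, and that the fiber over a point of $F$ consists of \emph{two} vertical segments (one on each side of the fold), which is where the factor $2$ comes from. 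Your argument encodes exactly the same vertical integration in the primitive $\beta$ with $d\beta=\omega_M$, and the factor $2$ appears instead as the two copies $F^{\pm}$ of the cut locus; your local analysis of folds (no vertical folds, no downward folds, pass-through versus fold at horizontal $2$-cells) is sound and matches what the paper implicitly relies on in defining $F$. What each approach buys: the flow/Fubini argument needs no orientation, no cutting, and no smoothness bookkeeping, and it is the version that transfers essentially verbatim to the non-embedded, transverse-map setting of Lemma \ref{foldbound2} via the partial foliation $\Fz$; your Stokes version is conceptually clean (volume as boundary flux of $\beta$, comass bound $\bigl|\int_S\widehat\beta\bigr|\leq\tfrac{1}{\ln(\la\mu)}\area(S)$), but it obliges you to check that cutting along the fold $2$-cells yields an orientable manifold with corners and that piecewise-smooth Stokes applies across the $1$-skeleton — the codimension-$2$ issues you flag, which do go through (the manifold condition on $W$ forces the link of any interior horizontal $1$-cell, after cutting, to lie in a single sheet, and orientability follows by pulling back the orientation of $M$ off a codimension-$2$ set), but which the paper's fiberwise argument avoids entirely. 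Your closing coarea remark is, in effect, a sketch of the paper's actual proof.
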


\begin{proof}
In $M$, integrating the volume element $(\lambda \mu)^{-z} \, dx dy
dz$ along a vertical ray from $z=0$ to $z=\infty$ yields
$\frac{1}{\ln(\lambda \mu)}$ times $dxdy$, the horizontal area element at
the initial point of the ray. Also, at any point of $\partial W$, the
surface area element is greater than or equal to the horizontal area
element. 

Consider a flow on $\widetilde{X} \cong \R^2 \times T$ which is towards
the end $\eta$ in the $T$ factor and the identity in $\R^2$. This flow is
semi-conjugate (by $q$) to a flow in $M$ which is directly
downward. Under this flow, every point $p$ of $W$ leaves $W$, either
through $\partial W$ or through $F$. Let $\pi_-(p)$ be the first point of
$\partial W$ or $F$ that $p$ meets under this flow. This defines a map
$\pi_- \co W \to (\partial W \cup F)$, not necessarily continuous. Then
$W$ decomposes into two parts, $W_{\partial} = \pi_-^{-1}(\partial W)$
and $W_F = \pi_-^{-1}(F)$. 

For any $p\in \partial W$, the fiber $\pi_-^{-1}(p)$ is a segment
extending upward from $p$, and integrating along these fibers, we find
that $\RVol(W_{\partial}) \leq \frac{1}{\ln (\lambda \mu)} \area(\partial
W)$. For $\RVol(W_F)$, the fiber of any point in $F$ 
consists of \emph{two} segments extending vertically, so $\RVol(W_F) \leq
\frac{2}{\ln( \lambda \mu)} \area(F)$. 
\end{proof}

It now suffices to bound $\area(F)$ from above in terms of
$\area(\partial W)$. 

\begin{noname}\label{items} 
We need to make some definitions. 
Let $L = \log_{\lambda}(\area(\partial W))$. We have the following
properties: 
\begin{align}
\lambda^L \ &= \ \area(\partial W), \label{lambdaL} \\
\mu^L \ &= \ \area(\partial W)^{\log_{\lambda}(\mu)}, \label{muL} \\
(\lambda \mu)^L \ &= \ \area(\partial W)^{1 +
\log_{\lambda}(\mu)}. \label{lambdamuL} 
\end{align}
Equation \eqref{lambdaL} holds by definition, \eqref{lambdamuL} follows
from \eqref{lambdaL} and \eqref{muL}, and \eqref{muL} is an instance of
the identity $a^{\log_b(c)} = c^{\log_b(a)}$. 

Let $v_1, \ldots, v_k \in V(T)$ be the vertices in the image of
$W$ under the projection $\pi_T \co \widetilde{X} \to T$. 
We define several items associated to these vertices: 
\begin{itemize}
\item $h_i = h_0(v_i)$, the \emph{height} of $v_i$ 
\item $F_i \ = \ \pi_T^{-1}(v_i) \cap F$, the \emph{fold set at $v_i$}  
\item $T_i = \{ x \in T \mid v_i \in [x,\eta) \, \}$, the \emph{subtree
above $v_i$} 
\end{itemize}
and the following subsets of $\partial W$:
\begin{itemize} 
\item $S_i = \partial W \cap \pi_T^{-1}(T_i)$, the \emph{surface above
$v_i$} 
\item $A_i = S_i \cap h^{-1}((h_i, h_i + 1))$, the \emph{low slice} of
$S_i$ 
\item $B_i = S_i \cap h^{-1}((h_i + L, h_i + L + 1))$, the \emph{high
slice} of $S_i$. 
\end{itemize}
Note that $\partial S_i$ has height $h_i$, so $A_i$ lies between heights
$0$ and $1$ above $\partial S_i$, and $B_i$ lies between heights $L$ and
$L+1$ above $\partial S_i$. 

\begin{lemma}\label{disjoint} 
$A_i \cap A_j = B_i \cap B_j = \emptyset$ for $i\not= j$. 
\end{lemma}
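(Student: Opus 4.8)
The plan is to reduce the lemma to two elementary facts about the Bass--Serre tree $T$ and the height function $h_0$ from \ref{covergeometry}, after which the disjointness falls out just by comparing heights. The informal picture is this: two ``surfaces above'' vertices $S_i,S_j$ can overlap only when $v_i$ and $v_j$ are comparable along the fixed end $\eta$, and in that case their low slices (respectively, high slices) lie in height bands an integer distance apart, hence disjoint.

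The first fact I would establish is: if $T_i \cap T_j \neq \emptyset$ then $v_i$ and $v_j$ are comparable towards $\eta$, i.e.\ $v_i \in [v_j,\eta)$ or $v_j \in [v_i,\eta)$. Given $y \in T_i \cap T_j$, the definition of the $T_i$ in \ref{items} puts both $v_i$ and $v_j$ on the geodesic ray $[y,\eta)$; since the points of a geodesic ray are linearly ordered, one of $v_i,v_j$ lies between $y$ and the other, which is the claim. In particular, if $v_i$ and $v_j$ are incomparable then $T_i \cap T_j = \emptyset$, so $S_i \cap S_j = \emptyset$ and hence $A_i \cap A_j = B_i \cap B_j = \emptyset$ trivially for such a pair.

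The second fact is: if $v_i \in [v_j,\eta)$ and $v_i \neq v_j$, then $h_i < h_j$ (in fact $h_j - h_i$ is a positive integer). This is the monotonicity of $h_0$ along rays to $\eta$: the ray $[v_j,\eta)$ meets vertices of heights $h_j, h_j-1, h_j-2, \dots$, because each vertex of $T$ has a single edge pointing towards $\eta$, along which $h_0$ drops by $1$ --- the $d$-fold branching of $\widetilde X$ recorded in \ref{covergeometry} happens only in the direction away from $\eta$. So $v_i$ is the vertex reached after $m \geq 1$ steps along $[v_j,\eta)$, and $h_i = h_j - m$.

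To finish, suppose $x \in A_i \cap A_j$ with $i \neq j$. From $x \in S_i \cap S_j$ we get $\pi_T(x) \in T_i \cap T_j \neq \emptyset$, so by the first fact, applied to the point $\pi_T(x)$, we may assume $v_i \in [v_j,\eta)$; and from $x \in h^{-1}((h_i,h_i+1)) \cap h^{-1}((h_j,h_j+1))$ these two open unit intervals must overlap, forcing $|h_i - h_j| < 1$, hence $h_i = h_j$ since $h_i, h_j \in \Z$ (their vertices lie in $h_0^{-1}(\Z)$). But $h_i = h_j$ together with $v_i \in [v_j,\eta)$ contradicts the second fact unless $v_i = v_j$, i.e.\ $i = j$. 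The case $x \in B_i \cap B_j$ is word for word the same, with $h(x)$ now in $(h_i + L, h_i + L + 1) \cap (h_j + L, h_j + L + 1)$, which again forces $h_i = h_j$. The one step that is not purely formal is the second fact --- that distinct vertices on a common ray to $\eta$ differ in height --- and this is precisely where the structure of $\widetilde X$ as a mapping torus glued by a covering map (so that branching is one-directional) must be invoked; everything else is bookkeeping.
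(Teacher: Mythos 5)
Your argument is correct and is essentially the paper's proof in contrapositive form: the paper splits into the cases $h_i \neq h_j$ (where the integer heights make the bands $(h_i,h_i+1)$ and $(h_j,h_j+1)$ disjoint) and $h_i = h_j$ (where $v_i \notin T_j$ and $v_j \notin T_i$ force $T_i \cap T_j = \emptyset$), which are exactly your two facts about comparability along rays to $\eta$ and strict monotonicity of $h_0$. Your write-up just makes explicit the tree-theoretic monotonicity that the paper leaves implicit, so there is nothing to change.
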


\begin{proof}
Consider the case of $A_i$ and $A_j$ first. If $h_i\not= h_j$
then $h(A_i) \cap h(A_j) = \emptyset$ since vertices have integer heights
and the sets $h(A_i)$ have the form $(h_i, h_i+1)$. If $h_i = h_j$
then $v_i \not\in T_j$ and $v_j \not\in T_i$, which implies that $T_i
\cap T_j = \emptyset$, and hence $A_i$ and $A_j$ are disjoint. The case
of $B_i$ and $B_j$ is similar. 
\end{proof}

Recall that for each $p \in F$, the fiber $\pi_-^{-1}(p)$ is a pair of
segments extending upward from $p$ (it is an open subtree of $p_0 \times
T \subset \R^2 \times T$, with no branching, since $W$
is a manifold). Define a (non-continuous) map $\pi_+ \co F \to \partial W$ 
by choosing $\pi_+(p)$ to be one of the two upper endpoints of the fiber
$\pi_-^{-1}(p)$ for each $p\in F$. Note that $\pi_+$ is injective (since
$\pi_- \circ \pi_+ = \id_F$), and 
$\pi_+(F_i) \subset S_i$. The choices of endpoints can be made so that
$\pi_+$ is measurable. 

We now express each fold set $F_i$ as a union of two parts, the
\emph{low} and \emph{high} parts, as follows: 
\begin{align*}
(F_i)_{\text{\sl low}} \ &= \ \{p \in F_i \mid h(\pi_+(p)) \leq h_i + L + 1\}, \\
(F_i)_{\text{\sl high}} \ &= \ \{p \in F_i \mid h(\pi_+(p)) \geq h_i + L + 1\}. 
\end{align*}
Also define $F_{\text{\sl low}} = \bigcup_i (F_i)_{\text{\sl low}}$ and
$F_{\text{\sl high}}= \bigcup_i (F_i)_{\text{\sl high}}$. Clearly, $F =
F_{\text{\sl low}} \cup F_{\text{\sl high}}$. 
\end{noname}

\begin{proposition}\label{lowpart} 
$\area(F_{\text{\sl low}}) \leq (\lambda \mu) \area(\partial W)^{2 +
\log_{\lambda}(\mu)}$.  
\end{proposition}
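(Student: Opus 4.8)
The plan is to transport area between the fold set and $\partial W$ by means of the injective measurable map $\pi_+ \co F \to \partial W$ constructed in \ref{items}, using two special features of these groups. First, $\pi_+$ preserves the $\R^2$-coordinate: each fiber of $\pi_-$ lies inside a single copy $\{p_0\}\times T$ of the tree, so $\pi_+$ is a ``vertical projection upward'' of $F$ onto $\partial W$. Second, in the model metric, vertical projection upward through a height difference $s$ scales horizontal area by exactly $(\lambda\mu)^{-s}$ (integrate the volume form $(\lambda\mu)^{-z}\,dx\,dy\,dz$, as in Proposition~\ref{foldbound}). For $p \in (F_i)_{\text{\sl low}}$ this height difference $h(\pi_+(p)) - h_i$ is at most $L+1$ by the very definition of the low part, which is precisely what produces the desired exponent.

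First I would fix $i$ and set $P = (F_i)_{\text{\sl low}}$, which sits entirely at height $h_i$ inside $\pi_T^{-1}(v_i)$. Let $\rho \co \widetilde{X} \to \R^2$ be the projection under the identification $\widetilde{X} \cong \R^2 \times T$, so that $\rho$ restricted to $\pi_T^{-1}(v_i)$ is a homeomorphism; put $P_0 = \rho(P) \subseteq \R^2$ and $\Sigma = \pi_+(P) \subseteq S_i \subseteq \partial W$. Since the horizontal slice at height $h_i$ carries the metric $\lambda^{-2h_i}dx^2 + \mu^{-2h_i}dy^2$, we have $\area(P) = (\lambda\mu)^{-h_i}\abs{P_0}$ with $\abs{\cdot}$ Euclidean area. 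Because $\rho \circ \pi_+ = \rho$ on $F_i$ and $\rho|_{F_i}$ is injective, $\rho|_{\Sigma}$ is injective onto $P_0$. Using that the area element of $\partial W$ dominates the horizontal area element and changing variables along $\rho|_{\Sigma}$, I would get
\[ \area(\Sigma) \ \geq \ \int_{P_0} (\lambda\mu)^{-z(p_0)}\, dp_0 \ \geq \ (\lambda\mu)^{-(h_i + L + 1)}\,\abs{P_0}, \]
where $z(p_0)$ is the height of the point of $\Sigma$ over $p_0$, and the second inequality uses $z(p_0) \leq h_i + L + 1$ on $\Sigma$ and $\lambda\mu > 1$. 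Comparing with $\area(P)$ yields $\area\bigl((F_i)_{\text{\sl low}}\bigr) \leq (\lambda\mu)^{L+1}\, \area\bigl(\pi_+((F_i)_{\text{\sl low}})\bigr)$.

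Then I would sum over $i$. The sets $(F_i)_{\text{\sl low}}$ are pairwise disjoint, since the $F_i = \pi_T^{-1}(v_i) \cap F$ sit over distinct vertices; as $\pi_+$ is injective, the images $\pi_+((F_i)_{\text{\sl low}})$ are pairwise disjoint subsets of $\partial W$, hence $\sum_i \area(\pi_+((F_i)_{\text{\sl low}})) \leq \area(\partial W)$. Therefore
\[ \area(F_{\text{\sl low}}) \ = \ \sum_i \area\bigl((F_i)_{\text{\sl low}}\bigr) \ \leq \ (\lambda\mu)^{L+1}\,\area(\partial W) \ = \ (\lambda\mu)\,(\lambda\mu)^{L}\,\area(\partial W), \]
and substituting $(\lambda\mu)^{L} = \area(\partial W)^{1 + \log_{\lambda}(\mu)}$ from \eqref{lambdamuL} gives exactly $\area(F_{\text{\sl low}}) \leq (\lambda\mu)\,\area(\partial W)^{2 + \log_{\lambda}(\mu)}$.

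I expect the only real obstacle to be the bookkeeping around $\pi_+$ in the per-vertex step: confirming that $\Sigma$ is measurable and that $\rho|_{\Sigma}$ is genuinely injective, so that the change of variables and the comparison with the horizontal area form are legitimate, and then phrasing precisely the claim that $\area(\partial W)$ bounds the total horizontal area of the disjoint pieces. All of this should be handled by the fact that $\pi_+$ is chosen measurable together with the disjointness statements already recorded in \ref{items}; the remainder is a direct computation with the volume form. It is worth noting that the low/high dichotomy is doing essential work — for $(F_i)_{\text{\sl high}}$ the fibers rise above height $h_i + L + 1$, so this projection argument breaks down and one must instead exploit the disjointness of the high slices $B_i$.
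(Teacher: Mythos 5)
Your proof is correct and follows essentially the same route as the paper: compare the horizontal area element at $p \in F_{\text{\sl low}}$ with that at $\pi_+(p)$ (scaled by at most $(\lambda\mu)^{L+1}$ since the upward displacement is at most $L+1$), note that the surface area element of $\partial W$ dominates the horizontal one, use injectivity of $\pi_+$, and finish with equation~\eqref{lambdamuL}. Your per-vertex change-of-variables bookkeeping is just a more explicit writing of the paper's one-paragraph global comparison, so there is nothing further to flag.
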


\begin{proof}
We compare the areas of $F_{\text{\sl low}}$ and its image under $\pi_+$,
which is a subset of $\partial W$. Since $\pi_+$ projects points of
$F_{\text{\sl low}}$ upward a
distance of at most $L+1$, the horizontal area element at $p \in
F_{\text{\sl low}}$ is at most 
$(\lambda \mu)^{L+1}$ times the horizontal area element at
$\pi_+(p)$. Recall also that this latter area element is no larger than
the surface area element of $\partial W$ at $\pi_+(p)$. Since $\pi_+$ is
injective, we now have $\area(F_{\text{\sl low}}) \leq (\lambda
\mu)^{L+1} \area( \pi_+(F_{\text{\sl low}}))$. The proposition follows, by
equation~\eqref{lambdamuL} and the fact that
$\area(\pi_+(F_{\text{\sl low}})) \leq \area(\partial W)$. 
\end{proof}

\begin{noname}\label{furtherdefs} 
We need to introduce some further terminology. Recall that the map $q\co
\widetilde{X} \to M$ is the identity on the $\R^2$ factors of
$\widetilde{X}$ and $M$. Thus the $\R^2$ factor of $\widetilde{X}$ 
has coordinates $x, y$ coming from $M$. Let $\pi_x$, $\pi_y \co
\widetilde{X}= \R^2 \times T \to \R^2$ be the projection maps onto the
$x$- and $y$-axes: $\pi_x(x,y,t) = (x,0)$ and $\pi_y(x,y,t) = (0, y)$. 

Given $t\in T$ and a subset $S \subset \R^2 \times t$, let $\ellx(S)$ be
the length of $\pi_x(S) \times h_0(t)$ considered as a subset of
$M$. This subset is contained in a line parallel to the $x$-axis, and its
length in $M$ will depend on the height of $t$. Similarly, let $\elly(S)$
be the length of $\pi_y(S) \times h_0(t)$. Since the metric on $\R^2
\times t$ is Euclidean, we have 
\begin{equation}\label{productarea} 
\area(S) \ \leq \ \ellx(S) \, \elly(S). 
\end{equation}

Now consider two additional projection maps in $M$: the map $\Pi_x \co M
\to M$ given by $(x, y, z) \mapsto (x, 0, z)$, and $\Pi_y\co M \to M$
given by $(x, y, z) \mapsto (0, y, z)$. If we consider the image
coordinate planes in their induced metrics, both of these maps are
area-decreasing for surfaces in $M$. 

We wish to estimate the area of $(F_i)_{\text{\sl high}}$ using
equation~\eqref{productarea}. For this, we will relate
$\ellx((F_i)_{\text{\sl high}})$ and $\elly((F_i)_{\text{\sl high}})$ to
the areas of $A_i$ and $B_i$. Consider two more families of sets in $M =
\R^2 \times \R$ :
\begin{align*}
Q_i \ &= \ \pi_x((F_i)_{\text{\sl high}}) \times (h_i, h_i + 1), \\
R_i \ &= \ \pi_y((F_i)_{\text{\sl high}}) \times (h_i+ L, h_i + L + 1).
\end{align*}
These sets are contained in the $xz$- and $yz$-coordinate planes
respectively, and their areas may be measured in the induced (hyperbolic)
metrics. 
\end{noname}

\begin{lemma}\label{QRareas} 
For each $i$ we have 
\begin{itemize}
\item[(a)] $\ellx((F_i)_{\text{\sl high}}) \leq \lambda \area(Q_i)$ 
\item[(b)] $\elly((F_i)_{\text{\sl high}}) \leq \mu^L \area(R_i)$. 
\end{itemize}
\end{lemma}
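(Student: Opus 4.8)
The plan is to reduce both inequalities to the elementary bound $\ln t \le t-1$ (valid for all $t>0$, with equality only at $t=1$) by integrating in the coordinate planes of $M$. The one thing to keep in mind is that $(F_i)_{\text{\sl high}}$ lies in the horizontal slice $\R^2\times\{v_i\}$, which sits at height $h_i$, so that on this slice the metric scales Euclidean $x$-length by $\lambda^{-h_i}$ and Euclidean $y$-length by $\mu^{-h_i}$; meanwhile $Q_i$ and $R_i$ are the ruled regions swept out by the projections $\pi_x((F_i)_{\text{\sl high}})$ and $\pi_y((F_i)_{\text{\sl high}})$ over a unit interval of heights, and their areas are measured in the induced hyperbolic metrics $\lambda^{-2z}dx^2+dz^2$ and $\mu^{-2z}dy^2+dz^2$ on the $xz$- and $yz$-planes.

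For (a), let $c$ be the Lebesgue measure of $\pi_x((F_i)_{\text{\sl high}})$ inside the $x$-axis. Directly from the definition of $\ellx$ we get $\ellx((F_i)_{\text{\sl high}}) = \lambda^{-h_i}c$, while integrating the area element $\lambda^{-z}\,dx\,dz$ over $Q_i = \pi_x((F_i)_{\text{\sl high}})\times(h_i,h_i+1)$ gives $\area(Q_i) = c\int_{h_i}^{h_i+1}\lambda^{-z}\,dz = c\lambda^{-h_i}\frac{\lambda-1}{\lambda\ln\lambda}$. Dividing, $\ellx((F_i)_{\text{\sl high}}) = \frac{\lambda\ln\lambda}{\lambda-1}\area(Q_i)\le \lambda\,\area(Q_i)$, the last step being precisely $\ln\lambda\le\lambda-1$.

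Part (b) is the same computation with $y,\mu$ replacing $x,\lambda$, the only subtlety being that $\mu<1$, so $\mu^{-z}$ is increasing and one must keep the signs straight. Writing $b$ for the Lebesgue measure of $\pi_y((F_i)_{\text{\sl high}})$, we have $\elly((F_i)_{\text{\sl high}}) = \mu^{-h_i}b$, while $R_i = \pi_y((F_i)_{\text{\sl high}})\times(h_i+L,h_i+L+1)$ has $\area(R_i) = b\int_{h_i+L}^{h_i+L+1}\mu^{-z}\,dz = b\mu^{-(h_i+L)}\frac{\mu^{-1}-1}{-\ln\mu}$, a positive quantity since $\mu<1$. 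Hence $\elly((F_i)_{\text{\sl high}}) = \mu^L\frac{-\ln\mu}{\mu^{-1}-1}\area(R_i)\le \mu^L\area(R_i)$, where the final inequality is $\ln t\le t-1$ applied to $t=\mu^{-1}>1$, using $\ln(\mu^{-1})=-\ln\mu$.

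There is essentially no obstacle: the content is just that sweeping a horizontal slice through a unit interval of heights multiplies its measure by the bounded factor $\int\lambda^{-z}\,dz$ (resp. $\int\mu^{-z}\,dz$) over that interval, which stays within a factor $\lambda$ (resp. $\mu^L$, once the height offset $L$ between the slice at $h_i$ and the interval $(h_i+L,h_i+L+1)$ is taken into account) of the pointwise value. The one point deserving a word of care is that $\pi_x((F_i)_{\text{\sl high}})$ and $\pi_y((F_i)_{\text{\sl high}})$ need not be intervals, so all the ``lengths'' and ``areas'' above must be read as Lebesgue integrals; this is consistent with the definitions of $\ellx$, $\elly$, $\area(Q_i)$ and $\area(R_i)$, and the measurability of $(F_i)_{\text{\sl high}}$, hence of its projections, follows from that of $\pi_+$ noted above.
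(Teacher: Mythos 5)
Your proof is correct and follows essentially the same route as the paper: write $\area(Q_i)$ and $\area(R_i)$ as integrals of the hyperbolic area elements $\lambda^{-z}dx\,dz$ and $\mu^{-z}dy\,dz$ over the ruled regions and compare with the weighted lengths $\ellx$, $\elly$ of the projections at height $h_i$. The only cosmetic difference is that you evaluate the inner $z$-integral exactly and then invoke $\ln t \le t-1$, whereas the paper simply bounds the integrand below by its minimum on the unit height interval ($\lambda^{-h_i-1}$ since $\lambda>1$, resp.\ $\mu^{-h_i-L}$ since $\mu<1$), which produces the same factors $\lambda$ and $\mu^L$.
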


\begin{proof}
For (a), the induced metric on the $xz$-coordinate plane is given by
$ds^2 = \lambda^{-2z} dx^2 + dz^2$, with area element $\lambda^{-z}
dx \, dz$. Let $D_i \subset \R$ be the projection $\{x \in \R \mid (x,0) \in
\pi_x((F_i)_{\text{\sl high}}) \}$. We have 
\begin{align*}
\area(Q_i) \ = \ \int_{D_i} \int_{h_i}^{h_i + 1} \lambda^{-z} dz \, dx
\ &\geq \ \int_{D_i} \int_{h_i}^{h_i + 1} \lambda^{-h_i-1} dz \, dx \\
&= \ \lambda^{-1} \int_{D_i} \lambda^{-h_i} dx \ = \ \lambda^{-1}
\ellx((F_i)_{\text{\sl high}}). 
\end{align*}
The inequality holds since $\lambda > 1$, and the last equality holds
since $F_i$ has height $h_i$. 

Part (b) is similar. The $yz$-plane has metric given by $ds^2
= \mu^{-2z} dy^2 + dz^2$ with area element $\mu^{-z}dy \, dz$. Let
$E_i\subset \R$ be the projection $\{y \in \R \mid (0,y) \in
\pi_y((F_i)_{\text{\sl high}}) \}$. Then 
\begin{align*}
\area(R_i) \ = \ \int_{E_i} \int_{h_i+L}^{h_i+L+1} \mu^{-z} dz \, dy \ &\geq
\ \int_{E_i} \int_{h_i+L}^{h_i+L+1} \mu^{-h_i-L} dz \, dy \\
&= \ \mu^{-L} \int_{E_i} \mu^{-h_i} dy \ = \ \mu^{-L}
\elly((F_i)_{\text{\sl high}}). 
\end{align*}
This time, the inequality holds because $\mu < 1$. 
\end{proof}

\begin{proposition}\label{highpart} 
$\area(F_{\text{\sl high}}) \leq \lambda \area(\partial W)^{2 +
\log_{\lambda}(\mu)}$. 
\end{proposition}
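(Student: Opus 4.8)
The plan is to estimate $\area((F_i)_{\text{\sl high}})$ for each $i$ and then sum. First, since $(F_i)_{\text{\sl high}}$ lies in the single Euclidean slice $\pi_T^{-1}(v_i)$, equation~\eqref{productarea} together with Lemma~\ref{QRareas} gives
\[
\area\bigl((F_i)_{\text{\sl high}}\bigr) \ \leq \ \ellx\bigl((F_i)_{\text{\sl high}}\bigr)\,\elly\bigl((F_i)_{\text{\sl high}}\bigr) \ \leq \ \lambda\mu^{L}\,\area(Q_i)\,\area(R_i).
\]
The fold sets $F_i$ lie over distinct vertices, hence are disjoint, so $\area(F_{\text{\sl high}}) = \sum_i \area((F_i)_{\text{\sl high}})$. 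If I can prove the two area comparisons $\area(Q_i) \leq \area(A_i)$ and $\area(R_i) \leq \area(B_i)$, then I will be done: using non-negativity of all terms and then Lemma~\ref{disjoint} (the $A_i$, and likewise the $B_i$, are pairwise disjoint subsets of $\partial W$),
\[
\area(F_{\text{\sl high}}) \ \leq \ \lambda\mu^{L}\sum_i \area(A_i)\,\area(B_i) \ \leq \ \lambda\mu^{L}\Bigl(\sum_i \area(A_i)\Bigr)\Bigl(\sum_i \area(B_i)\Bigr) \ \leq \ \lambda\mu^{L}\,\area(\partial W)^2,
\]
which equals $\lambda\,\area(\partial W)^{2+\log_{\lambda}(\mu)}$ by \eqref{muL}.

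The heart of the matter is therefore the comparison $\area(Q_i) \leq \area(A_i)$, and for this I would prove the set inclusion $Q_i \subseteq \Pi_x(q(A_i))$ inside the $xz$--plane of $M$. Given $p \in (F_i)_{\text{\sl high}}$, let $\gamma \subseteq \pi_-^{-1}(p)$ be the arc of the fiber from $p$ to $\pi_+(p)$: a vertical arc contained in $W$, with $\R^2$--coordinate constantly that of $p$, running from height $h_i$ up to $h(\pi_+(p)) \geq h_i + L + 1$. For each $z_0 \in (h_i, h_i+1)$ the arc $\gamma$ contains a point $w$ at height $z_0$; this $w$ lies in $W$, over the subtree $T_i$, with the same $x$-- and $y$--coordinates as $p$. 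Since $W$ is compact, the line through $w$ in the $y$--direction (which changes neither the $T$--coordinate, hence the height, nor the $x$--coordinate) must exit $W$, and so meets $\partial W$ in a point $w'$ lying in $\partial W \cap \pi_T^{-1}(T_i) \cap h^{-1}((h_i, h_i+1)) = A_i$; moreover $\Pi_x(q(w'))$ is precisely the point of $Q_i$ with the $x$--coordinate of $p$ and height $z_0$. Letting $p$ and $z_0$ vary gives $Q_i \subseteq \Pi_x(q(A_i))$, and since $q$ is a local isometry and $\Pi_x$ is area--decreasing on surfaces (see~\ref{furtherdefs}), the composite $\Pi_x \circ q$ is area non-increasing on $A_i$, so $\area(Q_i) \leq \area(A_i)$. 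The comparison $\area(R_i) \leq \area(B_i)$ is identical in form: the same arc $\gamma$ reaches every height in $(h_i + L, h_i + L + 1)$ because $p$ is in the high part, and one now slides horizontally in the $x$--direction and projects by $\Pi_y$ onto the $yz$--plane, landing in the high slice $B_i$.

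The step I expect to be the main obstacle is exactly this inclusion $Q_i \subseteq \Pi_x(q(A_i))$. The delicate point is that a point $w$ of the vertical fiber $\gamma$ at an intermediate height need not lie on $\partial W$ — it may well sit in the interior of $W$ — so one genuinely needs the compactness of $W$ to slide $w$ horizontally within its $T$--edge (keeping the height fixed) out to $\partial W$, and one must verify that the resulting boundary point still lies over $T_i$ and in the correct unit height slab, so that the coordinate--plane projection sends it to the prescribed point of $Q_i$ (resp.\ $R_i$). Once both inclusions are in hand, the rest is routine bookkeeping with the quantities introduced in~\ref{items} and the fact that $q$, $\Pi_x$, and $\Pi_y$ do not increase area.
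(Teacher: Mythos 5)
Your argument is correct and is essentially the paper's own proof: the same reduction via \eqref{productarea} and Lemma~\ref{QRareas} to the comparisons $\area(Q_i)\leq\area(A_i)$, $\area(R_i)\leq\area(B_i)$, established by the same device of following the vertical fiber from $p\in(F_i)_{\text{\sl high}}$ to the desired height and then sliding horizontally (in the $y$-direction for $A_i$, the $x$-direction for $B_i$) until exiting $W$, followed by the area-decreasing projections $\Pi_x,\Pi_y$, Lemma~\ref{disjoint}, and \eqref{muL}. No gaps to report.
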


\begin{proof}
We will show that 
\begin{equation}\label{highparteqn}
\area((F_i)_{\text{\sl high}}) \ \leq \ \lambda \mu^L \area(A_i)
\area(B_i) 
\end{equation}
for all $i$. Then, summing over $i$ and applying Lemma~\ref{disjoint}, we
obtain 
\begin{equation*}
\area(F_{\text{\sl high}}) \ \leq \ \lambda \mu^L \area(\partial W)^2 
\end{equation*}
which implies the proposition by equation~\eqref{muL}. 

To establish \eqref{highparteqn} it suffices to show that $\area(Q_i)
\leq \area(A_i)$ and $\area(R_i) \leq \area(B_i)$ and to apply
equation~\eqref{productarea} and Lemma~\ref{QRareas}. 

First we claim that $\Pi_y (q(B_i))$ contains $R_i$. Choose any $p\in
(F_i)_{\text{\sl high}}$ and $h \in (h_i + L, h_i + L + 1)$. Write $p$ as
$(p_0, t_0) \in \R^2 \times T$ and $\pi_+(p)$ as $(p_0, t_1)$. The
segment $p_0 \times [t_0, t_1]$ is part of the fiber $\pi_-^{-1}(p)$, and is
contained in $W$. Since $p$ is in the high part of $F_i$, the height of
$t_1$ is at least $h_i + L + 1$, and there is a unique $t \in [t_0, t_1]$
of height $h$. Now we have $(p_0, t) \in W$. The line through $(p_0, t)$
parallel to the $x$-axis must exit $W$, at some point $b\in B_i$. Now
$\Pi_y(q(b)) = (\pi_y(b), h) = (\pi_y(p), h)$, and we have shown that
$R_i \subset \Pi_y(q(B_i))$. 

By a similar argument, $\Pi_x(q(A_i))$ contains $Q_i$ (reverse the
roles of $x$ and $y$ and choose $h \in (h_i, h_i + 1)$). Now recall that
$\Pi_x$ and $\Pi_y$ are area-decreasing and $q$ is locally isometric. It
follows that $\area(B_i) \geq \area(R_i)$ and $\area(A_i) \geq
\area(Q_i)$, as needed. 
\end{proof}

Finally, putting together Propositions \ref{foldbound}, \ref{lowpart}, and
\ref{highpart}, and consolidating constants (with the assumption that
$\area(\partial W) \geq 1$), we obtain 
\begin{equation}
\RVol(W) \ \leq \ \left(\frac{2\lambda(\mu +1) +1}{\ln(\lambda \mu)}\right)
\area(\partial W)^{2 + \log_{\lambda}(\mu)}  
\end{equation}
which has the form of the desired upper bound for $\Delta^{(2)}(x)$.

\subsection*{The general case} Now we return to the situation given at 
the beginning of this section, where $g \co W \to \widetilde{X}$ is a
least-volume transverse extension of $f \co \partial W \to
\widetilde{X}^{(2)}$. The proof will follow the same general outline as
in the embedded case, and we will work with analogues of the various
items $F_i$, $A_i$, $B_i$, $Q_i$, $R_i$, etc. The proof itself does not
depend formally on the embedded case, though we will use several of the
intermediate results obtained thus far. 

\begin{noname}
We need to introduce some terminology related to the generalized handle 
decomposition of $W$. Recall that a $2$-cell of $\widetilde{X}$ is either
horizontal or vertical, accordingly as it maps to a vertex or an edge of
the tree $T$. 

A $1$-handle is \emph{horizontal} if it maps to a horizontal $2$-cell of
$\widetilde{X}$ and is not a floating $1$-handle (i.e. it is homeomorphic
to $I\times D^{2}$, and not to $S^{1}\times D^{2}$). A $1$-handle is
\emph{vertical} if it maps to a vertical $2$-cell of $\widetilde{X}$ and
is not a floating $1$-handle. Thus, every $1$-handle is either
horizontal, vertical, or is a floating handle. 
\end{noname}

\begin{remark}\label{0handlesreduced}
Every non-floating $1$-handle either joins a $0$-handle to a $0$-handle,
a $0$-handle to $\partial W$, or $\partial W$ to $\partial W$. In the
first case, since the map $g$ is least-volume, the two $0$-handles map to
\emph{distinct} $3$-cells of $\widetilde{X}$. For otherwise, the two
neighboring $0$-handles can be cancelled by the procedure described in
Section~\ref{prelimsection}, reducing the volume of $g$. No $1$-handle
joins a $0$-handle to itself, since $\widetilde{X}$ has the property that
no $2$-cell appears more than once as a ``face'' of any single $3$-cell;
the closure of a $3$-cell in $\widetilde{X}$ is an embedded ball with
interior equal to the open $3$-cell. 
\end{remark}

\begin{noname}
We will need to make use of some vector fields on $W$, obtained by
pulling back the coordinate vector fields on $M$ via the map $q \circ g
\co W \to M$. These vector fields will be denoted
$\frac{\partial}{\partial x}$, $\frac{\partial}{\partial y}$, and
$\frac{\partial}{\partial z}$, and they are defined on the interiors of
the $0$-handles. In particular, every $0$-handle has an ``upward''
direction given by $\frac{\partial}{\partial z}$. 

We say that a horizontal 1-handle $H$ is \emph{minimal} if
$\frac{\partial}{\partial z}$ is directed \emph{away} from $H$ in both
neighboring $0$-handles. Such a 1-handle is a local minimum for the
height function (the $z$-coordinate) on the tree $T$. 

Since $T$ branches only in the upward direction, and since horizontal
$1$-handles are joined to $0$-handles mapping to distinct $3$-cells in
$\widetilde{X}$, there are no ``maximal'' 1-handles $H$ (where
$\frac{\partial}{\partial z}$ is directed toward $H$ on both ends). Hence
if a horizontal handle $H = I \times D^2$ is not minimal, then
$\frac{\partial}{\partial z}$ on the neighboring $0$-handles can be
extended to a non-vanishing vector field on $H$, tangent to the $I$
factor. Thus we will always regard $\frac{\partial}{\partial z}$ as being
defined (and non-zero) on the union of the $0$-handles and the
non-minimal horizontal $1$-handles. 

Let $\Fz$ be the partial foliation on $W$ 
whose leaves are the orbits of the flow along
$\frac{\partial}{\partial z}$. Some leaves of $\Fz$ may terminate or
originate in a $2$- or $3$-handle of $W$. These are the leaves whose
images in $\widetilde{X}$ meet a $0$- or $1$-cell. In terms of transverse
area, the set of such leaves has measure zero, and we will discard them
from $\Fz$. Note that the remaining leaves of $\Fz$ still meet the
$0$-handles in a set of full measure. Let $U_z$ denote the union of the
leaves of $\Fz$.

\medskip
Every vertical $2$-cell of $\widetilde{X}$ is a face of exactly two
$3$-cells, and also is not tangent to the vector fields
$\frac{\partial}{\partial x}$ or $\frac{\partial}{\partial y}$. (The
sides of $Q$ are not parallel to the $x$- or $y$-axes because the matrix
$A$ is hyperbolic.) These facts, together with Remark
\ref{0handlesreduced}, imply that for any vertical $1$-handle $H 
= I \times D^2$, the vector field $\frac{\partial}{\partial x}$ on the
neighboring $0$-handles extends to a non-vanishing vector field on $H$,
tangent to the $I$ factor. By adjusting lengths, we can arrange that this
field is independent of the $z$-coordinate (this is already true in the
$0$-handles). The vector field $\frac{\partial}{\partial y}$ is defined
similarly. We also define partial foliations $\Fx$ and $\Fy$ on the union
of the $0$-handles and vertical $1$-handles, analogously to $\Fz$. Note
that these two foliations coincide in the vertical $1$-handles, even
though they are transverse elsewhere. Again, we will discard all leaves
terminating or originating in a $2$- or $3$-handle of $W$. Let $U_x$ and
$U_y$ denote, respectively, the unions of the leaves of $\Fx$ and of
$\Fy$. 
\end{noname}

\begin{noname}
Every leaf of $\Fz$ is homeomorphic to $\R$ and is oriented by the vector
field $\frac{\partial}{\partial z}$. It terminates in a well-defined
point of $\partial W$, and originates either at a point in $\partial W$
or at a point in the boundary of a minimal $1$-handle. Similarly, every
leaf of $\Fx$ and $\Fy$ both originates and terminates on $\partial
W$. For $p \in U_{\alpha}$ let $\tau_{\alpha}(p)$ denote the terminal
point of the leaf of $\Fs_{\!\alpha}$ containing $p$ (for $\alpha = x, y,
z$). This defines maps $\tau_{\alpha}\co U_{\alpha} \to \partial W$. 
Also let $o_{\alpha}(p)$ be the origination point of the leaf of
$\Fs_{\!\alpha}$ containing $p$. 
\end{noname}

\begin{definition}
We wish to define the \emph{fold sets} in $W$, which will be embedded
surfaces with boundary (minus a measure zero set). 
Let $e_1, \ldots, e_k$ be the closed edges of 
$T$ which meet the image of $\pi_T \circ g$. Given $e_i$ and a point
$p_i$ in the interior of $e_i$, the preimage $(\pi_T \circ g)^{-1}(p_i)$
is a properly embedded surface $\Sigma_i \subset W$, by transversality,
and the preimage of the interior of $e_i$ is an open regular
neighborhood of $\Sigma_i$. The intersection of $\Sigma_i$ with the
handle decomoposition of $W$ is a handle decomposition of $\Sigma_i$,
and the map is transverse with respect to this structure. The closure
of the preimage of the interior of $e_i$ is a union of handles of $W$,
and is a codimension-zero submanifold of $W$, homeomorphic to $\Sigma_i
\times I$, with the product handle structure. That is, each $0$-, $1$-, or
$2$-handle of $\Sigma_i \times I$ is the product of a $0$-, $1$-, or
$2$-handle of $\Sigma_i$ with $I$. The product structure $\Sigma_i \times
I$ is chosen so that fibers $p \times I$ map by $q \circ g$ into vertical
lines in $M$ (in particular, $I$ corresponds to the $z$-coordinate in the
$0$-handles). 

Let $v_i$ be the lower endpoint of $e_i$ (with respect to the height
function), and orient the $I$ factor of $\Sigma_i \times I$ so that
$\Sigma_i \times 0$ maps to $v_i$. The handles of $W$ comprising
$\Sigma_i \times I$ are all $0$-, $1$-, and $2$-handles. Various $1$-,
$2$-, and $3$-handles (those mapping to $v_i$ by $\pi_T \circ g$) may be
attached in part to $\Sigma_i \times 0$. Let $E_i$ be the intersection of
$\Sigma_i \times 0$ with the union of all minimal $1$-handles. It is a
codimension-zero submanifold of $\Sigma_i \times 0$, equal to a union of 
attaching regions of minimal $1$-handles. Every minimal $1$-handle is
attached to two surfaces $E_i, E_j$ for some $i \not= j$, since the
adjacent $0$-handles are distinct and map to distinct edges of
$T$. Lastly, define $F_i$ to be $E_i \cap U_z$. Note that $F_i$ has full
measure in $E_i$.  
%
%
\end{definition}

Having defined $F_i$ and $v_i$, note that various vertices $v_i$
may now coincide (unlike the embedded case). Define the heights
$h_i$ exactly as before: $h_i = h_0(v_i)$. Define $L =
\log_{\lambda}(\RArea(f))$, and note that equations analogous to
\eqref{lambdaL}--\eqref{lambdamuL} hold:  
\begin{align}
\lambda^L \ &= \ \RArea(f), \label{ilambdaL} \\
\mu^L \ &= \ \RArea(f)^{\log_{\lambda}(\mu)}, \label{imuL} \\
(\lambda \mu)^L \ &= \ \RArea(f)^{1 +
\log_{\lambda}(\mu)}. \label{ilambdamuL} 
\end{align}

We redefine the subtrees $T_i$ to be smaller
than those from section~\ref{items}, by splitting along the edges above
the vertex. That is, we now define
\[T_i \ = \ \{x \in T \mid \interior(e_i) \cap [x,\eta)
\not= \emptyset\}.\] 
This is an \emph{open} subtree of $T$, not containing $v_i$. Define
$S_i$, $A_i$, and $B_i$ as follows: 
\begin{itemize}
\item $S_i = \partial W \cap \closure((g \circ \pi_T)^{-1} (T_i))$, 
\item $A_i = S_i \cap (g \circ h)^{-1}((h_i, h_i + 1))$, 
\item $B_i = S_i \cap (g \circ h)^{-1}((h_i + L, h_i + L + 1))$.
\end{itemize}
Note that $S_i$ is a subsurface of $\partial W$ and $\partial S_i =
\partial W \cap (\Sigma_i \times 0)$. The next lemma has essentially the
same proof as Lemma~\ref{disjoint}. 

\begin{lemma}\label{disjoint2}
$A_i \cap A_j = B_i \cap B_j = \emptyset$ for $i\not= j$. \qed
\end{lemma}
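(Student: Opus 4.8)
The plan is to mimic the proof of Lemma~\ref{disjoint} almost line by line, splitting on whether $h_i = h_j$. If $h_i \ne h_j$, nothing needs to change: vertices of $T$ have integer heights, so the open intervals $(h_i,h_i+1)$ and $(h_j,h_j+1)$ are disjoint, as are $(h_i+L,h_i+L+1)$ and $(h_j+L,h_j+L+1)$; since $h\circ g$ carries $A_i$ into the first interval and $A_j$ into the second (and $B_i$, $B_j$ into the shifted pair), the intersections $A_i\cap A_j$ and $B_i\cap B_j$ are empty.

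The case $h_i = h_j =: \bar h$ is where the new setup forces a small extra observation, since distinct edges $e_i\ne e_j$ may now share their lower vertex, so one can no longer expect $S_i\cap S_j=\emptyset$ outright. First I would record the relevant tree geometry: moving toward the fixed end $\eta$ strictly decreases $h_0$, so $h_0$ is strictly monotone along every geodesic ray $[x,\eta)$, whence $[x,\eta)\cap h_0^{-1}\big((\bar h,\bar h+1)\big)$ is connected and therefore lies inside the interior of a single edge of $T$. If $x\in T_i\cap T_j$, this one edge-interior would meet both $\interior(e_i)$ and $\interior(e_j)$, which are disjoint since $e_i\ne e_j$ — a contradiction. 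Hence $T_i\cap T_j=\emptyset$, and since $\closure(T_i)=T_i\cup\{v_i\}$ and $\closure(T_j)=T_j\cup\{v_j\}$ (an open subtree of this type does not contain its own lower vertex), we get $\closure(T_i)\cap\closure(T_j)\subseteq\{v_i,v_j\}$, a set of vertices of height $\bar h$.

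To finish: $S_i\subseteq\partial W\cap\closure\big((\pi_T\circ g)^{-1}(T_i)\big)\subseteq\partial W\cap(\pi_T\circ g)^{-1}(\closure(T_i))$ by continuity of $\pi_T\circ g$, so every point of $S_i\cap S_j$ is sent by $\pi_T\circ g$ into $\{v_i,v_j\}$ and thus has $h\circ g$-value $\bar h$. But $A_i$ lives at $h\circ g$-values in the open interval $(\bar h,\bar h+1)$ and $B_i$ at values in $(\bar h+L,\bar h+L+1)$, both of which miss $\bar h$ (recalling $L\ge 0$, i.e. $\RArea(f)\ge 1$). Therefore $A_i\cap A_j=B_i\cap B_j=\emptyset$, as claimed. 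The only genuine obstacle is precisely this last point: unlike in Lemma~\ref{disjoint}, disjointness of the slices cannot be inherited from disjointness of the ambient surfaces $S_i$, $S_j$, and must instead be extracted from the height bands defining $A_i$ and $B_i$.
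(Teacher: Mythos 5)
Your argument is correct and is essentially the paper's intended one: the paper simply asserts that Lemma~\ref{disjoint2} ``has essentially the same proof as Lemma~\ref{disjoint},'' and your write-up supplies exactly the extra observations this requires in the non-embedded setting (disjointness of the open subtrees $T_i$ via monotonicity of $h_0$ along rays to $\eta$, and the fact that the closures $\closure(T_i)$, $\closure(T_j)$ can only meet in the lower vertices at height $h_i=h_j$, which the open height bands defining $A_i$ and $B_i$ then exclude). Your caveat that $L \geq 0$ is harmless and consistent with the paper, which elsewhere makes the standing assumption that the boundary area is at least $1$.
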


Now let $F = \bigcup_i F_i$, and define $\pi_+ \co F 
\to \partial W$ to be the restriction $\tau_z \vert_{F}$. That
is, $\pi_+$ flows $F$ ``upward'' along $\frac{\partial}{\partial z}$ to
$\partial W$. Note that $\pi_+$ is indeed defined on $F$, and is
injective. Define the \emph{low} and \emph{high} parts of $F$ as before: 
\begin{align*}
(F_i)_{\text{\sl low}} \ &= \ \{p \in F_i \mid h(g(\pi_+(p))) \leq h_i + L
+ 1\}, \\ 
(F_i)_{\text{\sl high}} \ &= \ \{p \in F_i \mid h(g(\pi_+(p))) \geq h_i +
L + 1\}.  
\end{align*}
Also define $F_{\text{\sl low}} = \bigcup_i (F_i)_{\text{\sl low}}$ and
$F_{\text{\sl high}}= \bigcup_i (F_i)_{\text{\sl high}}$. 

\begin{lemma}\label{foldbound2}
$\RVol(g) \ \leq \ \frac{1}{\ln(\lambda \mu)} ( \RArea(f) + \RArea(g
\vert_F))$. 
\end{lemma}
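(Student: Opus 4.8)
The plan is to mimic the proof of Proposition~\ref{foldbound} (the embedded case), replacing the inclusion $W \hookrightarrow \widetilde X$ with the general transverse map $g$, and replacing the geometric flow ``toward $\eta$'' with the flow along the partial foliation $\Fz$. Concretely, the flow along $-\frac{\partial}{\partial z}$ on the leaves of $\Fz$ plays the role of the downward flow in $M$: every leaf of $\Fz$ originates either on $\partial W$ or in a minimal $1$-handle, and terminates on $\partial W$. So $U_z$ (the union of the leaves of $\Fz$, a full-measure subset of the $0$-handles, hence of $W$ as far as $\RVol$ is concerned) decomposes according to whether the origination point $o_z(p)$ lies in $\partial W$ or in some minimal $1$-handle, i.e. in $F = \bigcup_i F_i$. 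Call these pieces $U_\partial$ and $U_F$ respectively.

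First I would observe that $\RVol(g) = \RVol(g\vert_{U_z})$, since $\Fz$ has leaves meeting the $0$-handles in full measure and $g$ is an immersion only on the $0$-handles (Remarks following the Riemannian-volume discussion); leaves discarded for hitting $1$- or $2$-handles form a measure-zero set. Next, since $q\circ g$ carries each leaf of $\Fz$ to a vertical line in $M$ parametrized so that $\frac{\partial}{\partial z}$ is the unit upward field, I can compute $\RVol$ by integrating along leaves and using the fact — already extracted in the proof of Proposition~\ref{foldbound} — that integrating the volume form $(\lambda\mu)^{-z}\,dx\,dy\,dz$ up a vertical ray from its base gives $\tfrac{1}{\ln(\lambda\mu)}$ times the horizontal area element at the base. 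Pushing forward along $\tau_z$ (for $U_\partial$) and along $o_z$/$\tau_z$ (for $U_F$): a leaf in $U_\partial$ runs monotonically upward from a point of $\partial W$ to a point of $\partial W$, contributing at most $\tfrac{1}{\ln(\lambda\mu)}$ times the horizontal area element at its terminal point $\tau_z(p)\in\partial W$, and since $\tau_z$ is injective on $U_z$ and the horizontal area element on $\partial W$ is dominated by its surface area element, $\RVol(g\vert_{U_\partial}) \leq \tfrac{1}{\ln(\lambda\mu)}\RArea(f)$. A leaf in $U_F$ originates in a minimal $1$-handle, which is attached to exactly two fold surfaces $F_i, F_j$; following the leaf downward one reaches $F$, and the full leaf through a point of $F$ consists of \emph{two} upward rays emanating from that point of $F$ (exactly as in the embedded case, because $W$ is a manifold and minimal $1$-handles have two sides). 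Thus $\RVol(g\vert_{U_F}) \leq \tfrac{1}{\ln(\lambda\mu)}\cdot\RArea(g\vert_F)$, where the factor accounting for ``two rays'' is absorbed because $g\vert_F$ is itself being measured and each point of $F$ is the base of both rays — more precisely, the total contribution of the two rays above a point of $F$ is at most $\tfrac{2}{\ln(\lambda\mu)}$ times the horizontal area element there, but $\RArea(g\vert_F)$ measures $F$ with multiplicity one, so I should state it as $\tfrac{1}{\ln(\lambda\mu)}\RArea(g\vert_F)$ only if the constant is normalized to absorb the $2$; checking the statement of Lemma~\ref{foldbound2}, the factor is exactly $\tfrac{1}{\ln(\lambda\mu)}$ applied to $\RArea(g\vert_F)$, so the ``$2$'' must be getting folded into how $g\vert_F$ is counted — I'll verify that $q\circ g$ restricted to a neighborhood of $F$ in $W$ already double-covers, making $\RArea(g\vert_F)$ the area with the multiplicity that supplies the missing factor.

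Summing the two estimates gives $\RVol(g) = \RVol(g\vert_{U_\partial}) + \RVol(g\vert_{U_F}) \leq \tfrac{1}{\ln(\lambda\mu)}\bigl(\RArea(f) + \RArea(g\vert_F)\bigr)$, which is the claim. The main obstacle I anticipate is the bookkeeping at the minimal $1$-handles: making rigorous that every leaf of $\Fz$ that does not originate on $\partial W$ originates at the boundary of a minimal $1$-handle and that the two upward leaves through a point of $F$ are genuinely distinct and disjoint (so no double counting in the wrong direction, and no missed contribution), using Remark~\ref{0handlesreduced} (distinct $3$-cells, no $2$-cell repeated as a face) to rule out branching of these leaves. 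The flow $\tau_z$ being only measurable, not continuous, means I must phrase everything as an inequality of integrals over $U_\partial$ and $U_F$ with the change-of-variables/coarea argument done leaf-by-leaf, rather than as a genuine fibration; this is exactly the style already used for $\pi_-$ and $\pi_+$ in the embedded case, so the template is in place. Everything else — the comparison of horizontal and surface area elements on $\partial W$, the value of the vertical integral — is imported verbatim from the proof of Proposition~\ref{foldbound}.
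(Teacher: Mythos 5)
Your overall strategy is exactly the paper's: discard the measure-zero complement of $U_z$, split $U_z$ according to whether a leaf of $\Fz$ originates on $\partial W$ or on $F$, and bound each piece by integrating the pulled-back volume form along leaves, just as in Proposition~\ref{foldbound}. Two points need tightening. First, the per-leaf estimate must be anchored at the \emph{origination} (bottom) point of the leaf, not at its terminal point: along a leaf running from height $z_0$ up to $z_1$ the vertical integral of $(\lambda\mu)^{-z}$ equals $\frac{1}{\ln(\lambda\mu)}\bigl((\lambda\mu)^{-z_0}-(\lambda\mu)^{-z_1}\bigr)$, which is bounded by $\frac{1}{\ln(\lambda\mu)}$ times the horizontal element at the bottom but, for tall leaves, exceeds $\frac{1}{\ln(\lambda\mu)}$ times the element at the top. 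Since the origination points of your $U_\partial$-leaves still lie on $\partial W$, the conclusion $\RVol(g\vert_{U_\partial})\le\frac{1}{\ln(\lambda\mu)}\RArea(f)$ survives once you anchor at $o_z$ rather than $\tau_z$; as written, though, the per-leaf inequality is false.

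Second, the factor-of-$2$ issue you flag is the real content of the lemma, and your guessed resolution is the right one, but it should be derived from the definition of $F$ rather than left as a double-covering claim to be checked. A point of $F$ lies in \emph{one} attaching region of a minimal $1$-handle, inside one surface $\Sigma_i\times 0$; since $\frac{\partial}{\partial z}$ is not defined on minimal $1$-handles, the leaf through that point does not cross the handle, so each point of $F$ is the base of exactly \emph{one} upward ray, not two. The doubling sits in $F$ itself: by Remark~\ref{0handlesreduced} the two $0$-handles adjacent to a minimal $1$-handle are distinct and map to distinct $3$-cells, so the handle is attached to two surfaces $E_i, E_j$ with $i\ne j$, and \emph{both} of its attaching regions contribute to $F$, mapping by $g$ onto the same horizontal region of $\widetilde{X}$. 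Hence $\RArea(g\vert_F)$ already carries multiplicity two, and the leafwise estimate anchored at $F$ gives $\RVol(g\vert_{U_F})\le\frac{1}{\ln(\lambda\mu)}\RArea(g\vert_F)$ with no extra factor --- precisely the point made in the remark following the lemma in the paper. With these two adjustments your argument coincides with the paper's proof.
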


\begin{proof}
We have $\RVol(g) = \RVol(g \vert_{U_z})$ since $U_z$ has full measure in
the $0$-handles of $W$. Note that every leaf of $\Fs_z$ starts on $F$ or
on $\partial W$, and ends in $\partial W$. Thus we may decompose $U_z$ as
$U_z^F \cup U_z^{\partial}$ where 
\begin{align*}
U_z^F \ &= \ \{ p \in U_z \mid o_z(p)\in F \, \}, \\
U_z^{\partial} \ &= \ \{ p \in U_z \mid o_z(p) \in \partial W \, \}. 
\end{align*}
Now $\RVol(g \vert_{U_z}) = \RVol(g \vert_{U_z^F}) + \RVol(g
\vert_{U_z^{\partial}})$. By pulling back the metric from $\widetilde{X}$
and integrating along leaves of $\Fs_z$, we have 
\[ \RVol(g \vert_{U_z^{F}}) \ \leq \ \frac{1}{\ln(\lambda \mu)} \RArea(g
  \vert_F) \]
and 
\[ \RVol(g \vert_{U_z^{\partial}}) \ \leq \ \frac{1}{\ln(\lambda \mu)}
\RArea(g \vert_{\partial W}) \ =  \ \frac{1}{\ln(\lambda
\mu)}\RArea(f). \qedhere\] 
\end{proof}

\begin{remark} 
In the current situation, there is no ambiguity or choice involved in the
definition of $\pi_+$. The difference with the embedded case is that 
each minimal $1$-handle has \emph{two} attaching regions contributing to
$F$, and there is a unique way to flow upward from each side. 
In effect, the fold set has been doubled, and this also accounts for the
missing factor of $2$ in Lemma \ref{foldbound2} (compared with
Proposition \ref{foldbound}). 
\end{remark}

Our main task now is to bound $\RArea(g \vert_F)$ in terms of
$\RArea(f)$. The next result is entirely analogous to Proposition
\ref{lowpart}, and has the same proof. The only difference is that here
the area elements are pulled back from $\widetilde{X}$. 

\begin{proposition}\label{lowpart2}
$\RArea(g \vert_{F_{\text{\sl low}}}) \ \leq \ (\lambda \mu) \RArea(f)^{2
+ \log_{\lambda}(\mu)}$. \qed 
\end{proposition}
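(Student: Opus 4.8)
The plan is to imitate the proof of Proposition~\ref{lowpart} essentially verbatim, the only change being that the horizontal area element of $\widetilde{X}$ is replaced throughout by the area form pulled back by $g$. Recall that $\pi_+ = \tau_z|_F$ flows $F$ upward along $\frac{\partial}{\partial z}$ into $\partial W$ and is injective. The geometric fact driving the estimate, exactly as in the embedded case, is that flowing a vertical line from height $z_0$ up to height $z_0 + \Delta z$ compresses the horizontal area element $(\lambda\mu)^{-z}\,dx\,dy$ by the factor $(\lambda\mu)^{-\Delta z}$; since $\lambda\mu > 1$, this is a contraction in the upward direction.

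First I would record that $g$ maps $F$ into horizontal $2$-cells of $\widetilde{X}$ (the sets $F_i$ lie in minimal $1$-handles, which are horizontal), so the pullback area form of $g|_F$ at a point $p \in F_i$ is the horizontal area element at height $h_i$. Next, for $p \in (F_i)_{\text{\sl low}}$ the defining inequality gives $h(g(\pi_+(p))) \leq h_i + L + 1$, and since every point of $F_i$ has height $h_i$, the flow $\pi_+$ moves $p$ upward by at most $L+1$. Combining this with the contraction fact above --- and using that the flow preserves the $x$- and $y$-coordinates of $q\circ g$ --- the pullback area element of $g|_F$ at $p$ is at most $(\lambda\mu)^{L+1}$ times the horizontal area element of $g$ at $\pi_+(p)$. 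Finally, at $\pi_+(p)\in\partial W$ the horizontal area element is no larger than the area element pulled back by $g|_{\partial W}$, since the horizontal projection in $M$ is area-decreasing on surfaces.

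Integrating over $F_{\text{\sl low}}$ and using injectivity of $\pi_+$ to avoid overcounting on $\partial W$, these observations give
\[ \RArea(g|_{F_{\text{\sl low}}}) \ \leq \ (\lambda\mu)^{L+1}\,\RArea(g|_{\partial W}) \ = \ (\lambda\mu)^{L+1}\,\RArea(f),\]
and the claimed bound follows from equation~\eqref{ilambdamuL}: since $(\lambda\mu)^{L} = \RArea(f)^{1+\log_{\lambda}(\mu)}$, the right-hand side equals $(\lambda\mu)\,\RArea(f)^{2+\log_{\lambda}(\mu)}$.

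The only delicate point --- the reason this is not literally a copy of Proposition~\ref{lowpart} --- is that $g$ is a transverse, hence singular, map rather than an inclusion, so one must check that the measure-theoretic bookkeeping is sound: that $F = \bigcup_i F_i$ has full measure in the union of the $E_i$, that the leaves of $\Fz$ discarded earlier form a null set, and that $\pi_+$ (defined and continuous only along leaves) is measurable and injective, so that the leafwise change-of-variables estimate integrates up correctly. All of this was arranged in the construction of $\Fz$, $U_z$, $\tau_z$, and $F$ earlier in the section, so the argument goes through just as in the embedded case.
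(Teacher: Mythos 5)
Your argument is correct and is exactly what the paper intends: the paper proves Proposition~\ref{lowpart2} by declaring it "entirely analogous to Proposition~\ref{lowpart}, with the same proof, the only difference being that the area elements are pulled back from $\widetilde{X}$," which is precisely the adaptation you carry out (upward flow of at most $L+1$ along $\Fz$, contraction of the horizontal area element by $(\lambda\mu)^{L+1}$, injectivity of $\pi_+=\tau_z\vert_F$, domination of the horizontal element by the boundary area element, and equation~\eqref{ilambdamuL}). Your closing remarks on the measure-theoretic bookkeeping correctly identify the points already arranged in the construction of $\Fz$, $U_z$, and $F$, so nothing is missing.
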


Next we need an analogue of equation \eqref{productarea}. 
In order to define the lengths $\ell_x$ and $\ell_y$ for the sets 
$(F_i)_{\text{\sl high}}$, we need to extend the vector fields
$\frac{\partial}{\partial x}$ and $\frac{\partial}{\partial y}$ to the
surfaces $\Sigma_i \times 0$. 
Recall that $\Sigma_i \times I$ has a product handle structure, 
and these
vector fields are defined in the interiors of the $0$-handles and
$1$-handles (all of which are vertical). 
Note that $\frac{\partial}{\partial x}$, in
the interior of $\Sigma_i \times I$, is zero in the $I$ factor and
constant (as $t \in I$ is varied) in the $\Sigma_i$ factor. Thus
$\frac{\partial}{\partial x}$ extends continuously to $\Sigma_i \times 0$
as a non-vanishing field, defined on the interiors of the $0$- and
$1$-handles of $\Sigma_i\times 0$. Any leaf of $\Fs_x$ meeting  $\Sigma_i
\times 0$ remains entirely within $\Sigma_i \times 0$, since
$\frac{\partial}{\partial x}$ is tangent to this surface (indeed, every
$\Sigma_i\times t$ has this property). 
The vector field $\frac{\partial}{\partial y}$ extends to $\Sigma_i
\times 0$ in the same way. Lastly, we discard leaves of $\Fs_x$ and
$\Fs_y$ meeting $2$-handles of $\Sigma_i \times 0$, so that every
leaf in $\Sigma_i \times 0$ begins and ends in $\partial S_i$. These
remaining leaves have full measure in the $0$-handles of $\Sigma_i \times
0$. 

We now define $\ell_x((F_i)_{\text{\sl high}})$ to be the transverse
measure of the set of leaves of $\Fs_y$ meeting $(F_i)_{\text{\sl
high}}$. That is, we project $(F_i)_{\text{\sl high}} \cap U_y$ to
$\partial S_i$ using $\tau_y$, and then measure this set by integrating the
pullback of the length element $\lambda^{-z} dx$ from $M$. Similarly, 
$\ell_y((F_i)_{\text{\sl high}})$ is defined using the 
length element $\mu^{-z}dy$. 

\begin{proposition}\label{productarea2} 
$\RArea(g \vert_{(F_i)_{\text{\sl high}}}) \ \leq \
\ell_x((F_i)_{\text{\sl high}}) \, \ell_y((F_i)_{\text{\sl high}})$ for each
$i$. 
\end{proposition}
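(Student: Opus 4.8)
The plan is to reproduce the elementary fact behind \eqref{productarea} — that a planar region has area at most the product of the lengths of its two coordinate projections — now that $q\circ g$ carries $(F_i)_{\text{\sl high}}$ into a flat plane rather than simply embedding a Euclidean subsurface. Let $\Omega\subset(F_i)_{\text{\sl high}}$ be the subset on which $q\circ g$ is an immersion and which is covered by the leaves of $\Fx$ and of $\Fy$; its complement is null and contributes nothing to $\RArea$. On $\Omega$, equipped with the pullback metric, $q\circ g$ is an isometric immersion into the plane $P_i = \R^2\times\{h_i\}\subset M$, which is flat with metric $\lambda^{-2h_i}\,dx^2 + \mu^{-2h_i}\,dy^2$; the leaves of $\Fx$ map to horizontal lines in $P_i$, traversed strictly monotonically in the $x$-coordinate, and the leaves of $\Fy$ map to vertical lines traversed strictly monotonically in $y$. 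In particular $x\circ q\circ g$ is constant along $\Fy$-leaves and $y\circ q\circ g$ is constant along $\Fx$-leaves.

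First I would record the area formula for the immersion $q\circ g\vert_\Omega$:
\[
\RArea(g\vert_{(F_i)_{\text{\sl high}}}) \ = \ (\lambda\mu)^{-h_i}\int_{\R^2} M(x,y)\,dx\,dy,
\]
where $M(x,y)$ is the number of points of $\Omega$ lying over $(x,y,h_i)\in P_i$, using that $P_i$ has area element $(\lambda\mu)^{-h_i}\,dx\,dy$.

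The crucial point is a multiplicity bound coming from the monotonicity above. If $(x_0,y_0)$ carries $m$ sheets then the $m$ preimages lie on $m$ \emph{distinct} leaves of $\Fy$: two of them could not lie on a common $\Fy$-leaf, since that leaf maps to a curve monotone in $y$ and hence meets the slice $\{y=y_0\}$ at most once. Writing $n_y(x)$ for the number of $\Fy$-leaves meeting $(F_i)_{\text{\sl high}}$ whose ($x$-constant) image lies over $\{x=x_0\}$, this gives $M(x,y)\leq n_y(x)$; symmetrically, with $n_x(y)$ the number of such $\Fx$-leaves over $\{y=y_0\}$, we get $M(x,y)\leq n_x(y)$, and hence $M(x,y)\leq\sqrt{n_y(x)\,n_x(y)}$. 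Next I would unwind the definitions of $\ellx$ and $\elly$: since $\tau_y$ collapses each $\Fy$-leaf to its single endpoint in $\partial S_i$ and preserves the $x$-coordinate, while $\partial S_i$ lies at height $h_i$, integrating the pullback of $\lambda^{-z}\,dx$ over $\tau_y((F_i)_{\text{\sl high}}\cap U_y)$ yields $\ellx((F_i)_{\text{\sl high}})=\lambda^{-h_i}\int_{\R}n_y(x)\,dx$, and likewise $\elly((F_i)_{\text{\sl high}})=\mu^{-h_i}\int_{\R}n_x(y)\,dy$. Feeding the bound on $M$ into the area formula and separating the double integral,
\[
\RArea(g\vert_{(F_i)_{\text{\sl high}}}) \ \leq \ (\lambda\mu)^{-h_i}\!\left(\int_{\R}\!\sqrt{n_y(x)}\,dx\right)\!\left(\int_{\R}\!\sqrt{n_x(y)}\,dy\right) \ \leq \ (\lambda\mu)^{-h_i}\!\left(\int_{\R}\! n_y(x)\,dx\right)\!\left(\int_{\R}\! n_x(y)\,dy\right),
\]
the last step because $n_y,n_x\geq 1$ on their (bounded) supports; the right-hand side equals $\ellx((F_i)_{\text{\sl high}})\,\elly((F_i)_{\text{\sl high}})$, which is the proposition.

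I expect the real obstacle to be the bookkeeping collected in the first paragraph: one must check that $\RArea$ only sees the immersion locus, that on $\Omega$ the two coordinate functions of $q\circ g$ together with the leaves of $\Fx$ and $\Fy$ genuinely form a local coordinate system (so that ``monotone along a leaf'' is meaningful), and that the sets discarded along the way — non-immersion points, leaves meeting $2$- or $3$-handles, leaves failing to terminate in $\partial S_i$ — are all null. Once this is arranged, the strict monotonicity of $q\circ g$ along leaves plays exactly the role that transversality to the two coordinate axes played in the embedded case, and everything else is Fubini.
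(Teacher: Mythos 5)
Your proposal is correct, and in substance it is the paper's argument: the geometric input is identical (along an $\Fy$-leaf the $x$-coordinate of the image is constant and $y$ is monotone, symmetrically for $\Fx$; the horizontal area element is the product of the length elements $\lambda^{-z}dx$ and $\mu^{-z}dy$; then Fubini), and only the bookkeeping differs. The paper notes that an $\Fx$-leaf and an $\Fy$-leaf meet in at most a point of a $0$-handle (or an interval in a $1$-handle), deduces that $\tau_y\times\tau_x$ is injective on the $0$-handles of $\Sigma_i\times 0$, and pushes the area integral forward into $\tau_y(\cdot)\times\tau_x(\cdot)\subset\partial S_i\times\partial S_i$, where it factors exactly as $\ellx\cdot\elly$; you instead invoke the area formula with a multiplicity function $M(x,y)$ and bound $M$ by the leaf counts $n_y(x)$, $n_x(y)$. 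Two small points you should make explicit. First, your equalities $\ellx((F_i)_{\text{\sl high}})=\lambda^{-h_i}\int_{\R} n_y(x)\,dx$ and its analogue for $\elly$ need that distinct leaves of $\Fy$ (resp.\ $\Fx$) have distinct terminal points on $\partial S_i$: since $\ellx$ is by definition an integral over the image $\tau_y((F_i)_{\text{\sl high}}\cap U_y)$, if leaves could share terminal points you would only get $\ellx\leq\lambda^{-h_i}\int n_y$, which is the wrong direction for your chain of inequalities. This injectivity does hold -- at a boundary point of the surface at most one flow line of the non-vanishing field $\frac{\partial}{\partial y}$ can terminate, by a flow-box argument -- and it is exactly the fact the paper uses implicitly when it passes from the leaf-intersection observation to injectivity of $\tau_y\times\tau_x$, so you are at the same level of rigor as the paper. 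Second, the leaves are only weakly monotone: the image is stationary while a leaf crosses a $1$-handle, so "strictly monotone" should be "monotone, and strict on $0$-handles"; since your multiplicity count only concerns points of $\Omega$, which lie in interiors of $0$-handles, this does not affect the bound $M(x,y)\leq\min\{n_y(x),n_x(y)\}$, and the rest of your argument (including the elementary step $\sqrt{n}\leq n$ on the supports) goes through.
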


\begin{proof}
First observe that the intersection of a leaf of $\Fs_x$ and a leaf of
$\Fs_y$ is either one point (in a $0$-handle of $\Sigma_i \times 0$), a
closed interval (in a $1$-handle of $\Sigma_i \times 0$), or is empty. 
To see this, map both leaves to $M$ and project onto the $x$-axis. Each
$\Fs_y$ leaf maps to a single point, whereas each $\Fs_x$ leaf maps
monotonically, with point preimages equal to sets of the form described
above. 

It follows that the map 
\[ \tau_y \times \tau_x \co (\Sigma_i \times 0) \cap U_x \cap U_y
\ \to \ \partial S_i \times \partial S_i \]
is injective when restricted to the $0$-handles of $\Sigma_i \times 0$. 

Next define the map ${g}_i \co \Sigma_i \times 0 \to \R^2$ to
be $q \circ g \co \Sigma_i \times 0 \to M$ followed by projection
onto the first two coordinates of $M = \R^3$. Thus, $q(g(p)) =
({g}_i(p), h_i) \in M$ for all $p \in \Sigma_i \times 0$. Let
$\pi_x, \pi_y \co \R^2 \to \R$ be projections onto the first 
and second coordinates respectively. It is easily verified that
${g}_i$ agrees with the following composition of maps: 
\[ (\Sigma_i \times 0) \cap U_x \cap U_y \ \xrightarrow{\tau_y \times
\tau_x} \ \partial S_i 
\times \partial S_i \ \xrightarrow{{g}_i \times {g}_i} \ \R^2 \times
\R^2 \ \xrightarrow{\pi_x \times \pi_y} \ \R \times \R.\]
(Write $q(g(p))$ as $(x_p, y_p, h_i)$; both maps send $p$ to $(x_p,
y_p)$.) 

Recall that $\Sigma_i \times 0$ maps into $\R^2 \times h_i \subset M$,
and so the surface area element being pulled back in the computation of
$\RArea(g\vert_{(F_i)_{\text{\sl high}}})$ is the horizontal area
element of $M$. This element is just the product of the length
elements $\lambda^{-z} dx$ and $\mu^{-z} dy$. 

In the integrals below, $(F_i)_{\text{\sl high}}$ is understood to be
restricted to the $0$-handles of $\Sigma_i \times 0$ (where area is
supported). We have 
\begin{align*}
\RArea(g\vert_{(F_i)_{\text{\sl high}}}) \ &= \ \int_{(F_i)_{\text{\sl
high}}} (q \circ g)^*(\lambda^{-z}dx \, \mu^{-z}dy) \\
&= \ \int_{(F_i)_{\text{\sl high}} \cap U_x \cap U_y} (\pi_x \times \pi_y
\circ {g}_i \times {g}_i \circ \tau_y \times \tau_x)^*(\lambda^{-z}dx \, \mu^{-z}dy) 
\end{align*}
which, by injectivity of $\tau_y \times \tau_x$, is at most 
\[\int_{\tau_y((F_i)_{\text{\sl high}} \cap U_x \cap U_y) \times
\tau_x((F_i)_{\text{\sl high}} \cap U_x \cap U_y)} (\pi_x \times \pi_y
\circ {g}_i \times {g}_i)^* (\lambda^{-z}dx \, \mu^{-z}dy).\]
The latter is equal to 
\[\int_{\tau_y((F_i)_{\text{\sl high}} \cap U_x \cap U_y)} (\pi_x
\circ {g}_i)^*(\lambda^{-z} dx) \ 
\int_{\tau_x((F_i)_{\text{\sl high}} \cap U_x \cap U_y)} (\pi_y \circ
{g}_i)^*(\mu^{-z} dy),\]
which is just $\ell_x((F_i)_{\text{\sl high}}) \
\ell_y((F_i)_{\text{\sl high}})$. 
\end{proof}

In \ref{furtherdefs} we defined the projection maps $\Pi_x, \Pi_y \co M
\to M$, sending $(x, y, z)$ to the points $(x, 0, z)$ and $(0, y, z)$ 
respectively. We also had projections $\pi_x, \pi_y \co \widetilde{X} =
\R^2 \times T \to \R^2$, mapping $(x,y, t)$ to $(x,0)$ and $(0,y)$
respectively. Define the sets $Q_i$, $R_i \subset M = \R^2 \times \R$ as
follows: 
\begin{align*}
Q_i \ &= \ \pi_x(g((F_i)_{\text{\sl high}}
)) \times (h_i, h_i + 1), \\
R_i \ &= \ \pi_y(g((F_i)_{\text{\sl high}}
)) \times (h_i+ L, h_i + L + 1).
\end{align*}
The claims of Lemma \ref{QRareas} remain true exactly as stated, and
are proved in the same way. 
Thus:

\begin{lemma}\label{QRareas2}
For each $i$ we have 
\begin{itemize}
\item[(a)] $\ellx((F_i)_{\text{\sl high}}) \leq \lambda \area(Q_i)$ 
\item[(b)] $\elly((F_i)_{\text{\sl high}}) \leq \mu^L \area(R_i)$. \qed
\end{itemize}
\end{lemma}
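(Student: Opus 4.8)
The plan is to follow the proof of Lemma~\ref{QRareas} essentially verbatim; the only differences are that $\ellx$ and $\elly$ now denote the transverse measures defined immediately above, and the forms being integrated are pulled back via $q\circ g$, which is harmless since $q$ is a local isometry. For part~(a) I would first record that the $xz$-coordinate plane of $M$ carries the metric $ds^2=\lambda^{-2z}\,dx^2+dz^2$, hence the area element $\lambda^{-z}\,dx\,dz$. Writing $D_i=\{x\in\R\mid(x,0)\in\pi_x(g((F_i)_{\text{\sl high}}))\}$, the definition of $Q_i$ gives
\[
\area(Q_i)\;=\;\int_{D_i}\int_{h_i}^{h_i+1}\lambda^{-z}\,dz\,dx
\;\geq\;\int_{D_i}\int_{h_i}^{h_i+1}\lambda^{-(h_i+1)}\,dz\,dx
\;=\;\lambda^{-1}\int_{D_i}\lambda^{-h_i}\,dx,
\]
the inequality being the elementary bound $\lambda^{-z}\geq\lambda^{-(h_i+1)}$ on $(h_i,h_i+1)$, valid since $\lambda>1$. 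Part~(b) is the exact mirror image: the $yz$-plane has metric $ds^2=\mu^{-2z}\,dy^2+dz^2$ and area element $\mu^{-z}\,dy\,dz$, and with $E_i=\{y\in\R\mid(0,y)\in\pi_y(g((F_i)_{\text{\sl high}}))\}$ one has $\area(R_i)\geq\mu^{-(h_i+L)}\int_{E_i}dy=\mu^{-L}\int_{E_i}\mu^{-h_i}\,dy$, the inequality using $\mu^{-z}\geq\mu^{-(h_i+L)}$ on $(h_i+L,h_i+L+1)$, valid because $\mu<1$; the surplus factor $\mu^{-L}$ is exactly the effect of $R_i$ sitting at height $h_i+L$ rather than $h_i$.

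It then remains to identify $\int_{D_i}\lambda^{-h_i}\,dx$ with $\ellx((F_i)_{\text{\sl high}})$ and $\int_{E_i}\mu^{-h_i}\,dy$ with $\elly((F_i)_{\text{\sl high}})$, which I would do exactly as in Lemma~\ref{QRareas}. The surface $\Sigma_i\times 0$ is mapped by $q\circ g$ into the single horizontal plane $\{z=h_i\}\subset M$, so there the length elements $\lambda^{-z}\,dx$ and $\mu^{-z}\,dy$ restrict to $\lambda^{-h_i}\,dx$ and $\mu^{-h_i}\,dy$; since leaves of $\Fs_y$ lying in $\Sigma_i\times 0$ have constant $x$-coordinate while leaves of $\Fs_x$ there have constant $y$-coordinate, the transverse-measure definitions of $\ellx$ and $\elly$ reduce, after discarding the null families of leaves meeting $2$- or $3$-handles, to the Lebesgue integrals $\lambda^{-h_i}|D_i|$ and $\mu^{-h_i}|E_i|$. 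Combining with the displayed estimates yields $\ellx((F_i)_{\text{\sl high}})\leq\lambda\,\area(Q_i)$ and $\elly((F_i)_{\text{\sl high}})\leq\mu^L\,\area(R_i)$.

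The one place calling for genuine care is this last reduction: one must confirm that, once the leaves terminating or originating in lower-dimensional handles have been thrown away, the transverse measures $\ellx((F_i)_{\text{\sl high}})$ and $\elly((F_i)_{\text{\sl high}})$ on the flat, height-$h_i$ surface $\Sigma_i\times 0$ genuinely collapse to one-dimensional integrals over $D_i$ and $E_i$, so that the monotonicity of $\lambda^{\pm z}$ and $\mu^{\pm z}$ over a unit height-interval can be applied. Beyond this bookkeeping the argument introduces no geometric ingredient not already used in Lemma~\ref{QRareas}, which is why the claims hold exactly as stated.
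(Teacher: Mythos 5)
Your proposal is correct and takes essentially the same approach as the paper: the paper's proof of Lemma~\ref{QRareas2} is simply the assertion that the computation of Lemma~\ref{QRareas} carries over, which is exactly what you do, with $D_i$ and $E_i$ now the coordinate projections of $g((F_i)_{\text{\sl high}})$ and the length and area elements pulled back via $q\circ g$. The one step you single out for care --- that on the flat height-$h_i$ surface $\Sigma_i\times 0$ the transverse measures $\ellx$ and $\elly$ collapse to the integrals $\lambda^{-h_i}\abs{D_i}$ and $\mu^{-h_i}\abs{E_i}$, because leaves of $\Fy$ (resp.\ $\Fx$) have constant $x$- (resp.\ $y$-) coordinate in $M$ --- is precisely the identification the paper leaves implicit, so your write-up is faithful to the intended argument.
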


Next we adapt Proposition \ref{highpart} to the current situation. 

\begin{proposition}\label{highpart2} 
$\RArea(g \vert_{F_{\text{\sl high}}}) \ \leq \ \lambda
\RArea(f)^{2+\log_{\lambda}(\mu)}$. 
\end{proposition}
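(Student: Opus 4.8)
The plan is to follow the proof of Proposition~\ref{highpart} essentially verbatim, with Riemannian areas in place of areas, the maps $q\circ g$ in place of inclusions, and the partial foliations $\Fs_x,\Fs_y,\Fs_z$ on $W$ in place of the coordinate lines of $M$. As there, the argument has a per-index step and a summation step. First I would prove, for each $i$,
\begin{equation*}
\RArea(g\vert_{(F_i)_{\text{\sl high}}}) \ \leq \ \lambda\,\mu^{L}\,\RArea(g\vert_{A_i})\,\RArea(g\vert_{B_i}).
\end{equation*}
Granting this, I sum over $i$: the sets $(F_i)_{\text{\sl high}}$ are essentially disjoint (each minimal $1$-handle contributes attaching regions to exactly two of the $E_i$), so the left-hand sides add up to $\RArea(g\vert_{F_{\text{\sl high}}})$; by Lemma~\ref{disjoint2} the $A_i$ are disjoint subsurfaces of $\partial W$, so $\sum_i\RArea(g\vert_{A_i})\leq\RArea(g\vert_{\partial W})=\RArea(f)$, and likewise for the $B_i$. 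Combining with $\sum_i a_ib_i\leq(\sum_i a_i)(\sum_i b_i)$ for nonnegative reals and then equation~\eqref{imuL} gives $\RArea(g\vert_{F_{\text{\sl high}}})\leq\lambda\mu^{L}\RArea(f)^{2}=\lambda\RArea(f)^{2+\log_{\lambda}(\mu)}$, which is the assertion.

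For the per-index estimate, Proposition~\ref{productarea2} and Lemma~\ref{QRareas2} reduce matters, exactly as in the proof of Proposition~\ref{highpart}, to the two set inclusions $R_i\subseteq\Pi_y(q(g(B_i)))$ and $Q_i\subseteq\Pi_x(q(g(A_i)))$; granting these, the area-decreasing property of $\Pi_x,\Pi_y$ together with $q$ being a local isometry yields $\area(Q_i)\leq\RArea(g\vert_{A_i})$ and $\area(R_i)\leq\RArea(g\vert_{B_i})$, which chain with $\ell_x((F_i)_{\text{\sl high}})\,\ell_y((F_i)_{\text{\sl high}})\leq\lambda\mu^{L}\area(Q_i)\area(R_i)$. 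I would prove $R_i\subseteq\Pi_y(q(g(B_i)))$ as follows, replacing ``the line through $(p_0,t)$ parallel to the $x$-axis'' of the embedded case by a leaf of $\Fs_x$. Given $p\in(F_i)_{\text{\sl high}}$ lying in a $0$-handle (the full-measure case) and a height $h\in(h_i+L,h_i+L+1)$: since $p$ is high, the $\Fs_z$-leaf from $p$ — which is monotone in height, starts at height $h_i$, and terminates at $\pi_+(p)$ of height at least $h_i+L+1>h$ — contains a point $p'$ at height exactly $h$, reached without changing the $x$- or $y$-coordinate of $q\circ g$. Flowing $p'$ along $\Fs_x$ to its terminal point $b=\tau_x(p')\in\partial W$ keeps the height and the $T$-coordinate fixed (this is where the combinatorial structure of $\widetilde{X}$ enters: $\frac{\partial}{\partial x}$ is tangent to the $\R^2$-factor on the $0$-handles and, on a vertical $1$-handle, stays over a single edge of $T$, since a vertical $1$-handle and its two neighboring $0$-handles all project to the same edge of $T$) and keeps the $y$-coordinate fixed. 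Since $p'$ lies over $T_i$ (flowing up from $p$ enters $\pi_T^{-1}(T_i)$ at once, as $\interior(e_i)\subseteq T_i$ and everything above $e_i$ lies in $T_i$), so does $b$; hence $b\in S_i$, and by its height $b\in B_i$, while $\Pi_y(q(g(b)))=(\pi_y(g(p)),h)$ is the prescribed point of $R_i$. The inclusion $Q_i\subseteq\Pi_x(q(g(A_i)))$ is symmetric, flowing up only to a height $h\in(h_i,h_i+1)$ — so that $p'$ lies in the block $\Sigma_i\times I$, over $\interior(e_i)\subseteq T_i$ — and then along $\Fs_y$, which likewise keeps leaves inside a single slice $\Sigma_i\times\{t\}$.

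The step I expect to be the main obstacle is precisely the verification that the terminal point of the horizontal leaf lands in $B_i$ (respectively $A_i$): this is where the combinatorial geometry of $\widetilde{X}$ must be invoked, through the facts that a horizontal flow moves within a single edge of $T$ and that a leaf of $\Fs_x$ or $\Fs_y$ meeting a slice $\Sigma_i\times\{t\}$ remains inside it. Two accompanying bookkeeping points are routine but should not be skipped: one works throughout on the full-measure sets $U_x,U_y,U_z$, so that the non-continuous maps $\tau_x,\tau_y,\pi_+$ are defined and the discarded leaves contribute nothing to $\RArea$; and one notes that $A_i$ and $B_i$ need only be \emph{met} by the flow maps, not covered, so that — in contrast with Proposition~\ref{productarea2} — no injectivity is needed at this stage.
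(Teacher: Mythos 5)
Your overall route is the same as the paper's: reduce, via Proposition~\ref{productarea2} and Lemma~\ref{QRareas2}, to the comparisons $\area(Q_i)\leq\RArea(f\vert_{A_i})$ and $\area(R_i)\leq\RArea(f\vert_{B_i})$, prove these by flowing up along $\Fs_z$ to height $h$ and then out along $\Fs_x$ (resp.\ $\Fs_y$) to a point of $B_i$ (resp.\ $A_i$), then sum over $i$ using Lemma~\ref{disjoint2} and \eqref{imuL}. The detail you add about why the horizontal leaf keeps its height and stays over a single edge of $T$, so that its endpoint lies in $S_i$ at the correct height, is correct and is indeed where the combinatorial structure enters.

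The gap is in asserting the full inclusions $R_i\subseteq\Pi_y(q(g(B_i)))$ and $Q_i\subseteq\Pi_x(q(g(A_i)))$, and in how you dispose of the exceptional points. Your construction of $b=\tau_x(p')$ presupposes $p'\in U_x$; for a given point $(\pi_y(g(p)),h)\in R_i$ this can fail (the $\Fs_x$-leaf through $p'$ may be one of the discarded leaves meeting a $2$- or $3$-handle, so no terminal point on $\partial W$ is produced), and $p'$ can also lie in a horizontal $1$-handle when $h$ is an integer. So the full inclusion is not established; and your bookkeeping remark --- that discarded leaves contribute nothing to $\RArea$ --- concerns measure zero \emph{in $W$}, whereas what is needed is that the set of points of $R_i$ missed by the construction has measure zero \emph{in the $yz$-plane}, the target of $\Pi_y\circ q$. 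That is a different statement and needs an extra argument, which is exactly what the paper supplies: points of $R_i$ with $h\in\Z$ form a null set; and if $p'$ lies in a $0$-handle but $p'\notin U_x$, the discarded leaf through $p'$ meets a $2$- or $3$-handle, all of which map by $g$ into the finite graph $K=g(W)\cap\widetilde{X}^{(1)}$; since $\Pi_y\circ q\circ g$ is constant along an $\Fs_x$-leaf, the missed point of $R_i$ lies in $\Pi_y(q(K))$, a measure-zero subset of the $yz$-plane. Hence $\Pi_y(q(f(B_i)))$ contains a full-measure subset of $R_i$, which is all the area comparison requires (similarly for $Q_i$, where only the $U_y$ issue arises since $h\in(h_i,h_i+1)$ is automatically non-integral). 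With ``inclusion'' replaced by ``inclusion up to measure zero'' and justified this way, your argument coincides with the paper's proof.
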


\begin{proof}
As in the proof of Proposition \ref{highpart}, it suffices to show that
$\area(Q_i) \ \leq \ \RArea(f\vert_{A_i})$ and $\area(R_i) \ \leq \
\RArea(f\vert_{B_i})$ for each $i$:
since 
\[\RArea(g\vert_{(F_i)_{\text{\sl high}}}) \ \leq \ \lambda \mu^L
\area(A_i) \area(B_i)\] 
by Proposition \ref{productarea2} and Lemma 
\ref{QRareas2}, we then have 
\[\RArea(g\vert_{(F_i)_{\text{\sl high}}}) \ \leq \ \lambda \mu^L
\RArea(f\vert_{A_i}) \RArea(f\vert_{B_i})\] 
for all $i$. Summing over $i$, using Lemma \ref{disjoint2}, we obtain the
desired inequality, by \eqref{imuL}. 

We claim that $\Pi_y(q(f(B_i)))$ contains a subset of $R_i$ of full
measure. Given a point in $R_i$, it is determined by points $p \in
(F_i)_{\text{\sl high}}$ and $h \in (h_i+L, h_i+L + 
1)$. Let $p' \in W$ be a point on the leaf of $\Fs_z$ through $p$ of
height $h$; such a point exists since $p$ has height $h_i$ and $\pi_+(p)$
has height $h_i+L+1$ or greater. Write $q(g(p'))$ as $(x_{p'}, y_{p'},
h)$ in the coordinates of $M$, and note that $q(g(p)) = (x_{p'}, y_{p'},
h_i)$. Thus $\pi_y(g(p)) = (0, y_{p'})$. 

If $p' \in U_x$ then $\tau_x(p')$ is defined and is in $B_i$, and
\[\Pi_y(q(f(\tau_x(p')))) \ = \ (0, y_{p'}, h) \ = \ (\pi_y(g(p)), h).\]
Therefore this point of $R_i$ is indeed in the image of $B_i$ under
$\Pi_y \circ q \circ f$. Thus we want to verify that $p' \in U_x$ for
almost all choices of $(\pi_y(g(p)), h) \in R_i$. 

Let $R_i'$ be the set of pairs $(\pi_y(g(p)), h) \in R_i$ such that $h$
is not an integer. 
Let $K \subset \widetilde{X}$ be the intersection of $g(W)$ with the
$1$-skeleton of $\widetilde{X}$. It is a finite graph, and its image
$\Pi_y(q(K))$ has measure zero in the $yz$-plane in $M$. Note also that
all $2$- and $3$-handles of $W$ map by $g$ into $K$. 

The point $p'$ must be in the interior of a $0$-handle or a horizontal
$1$-handle of $W$, since $p'\in U_z$. In the latter case, $p'$ maps to a
horizontal $2$-cell of $\widetilde{X}$, and so $h$ is an
integer. In the former case, $\frac{\partial}{\partial x}$ is defined at
$p'$. If $p'\not\in U_x$ then the (discarded) leaf of $\Fs_x$ through
$p'$ meets a $2$- or $3$-handle. Then $\Pi_y(q(g(p')))$ is contained in
the measure zero set $\Pi_y(q(K))$. But $\Pi_y(q(g(p')))$ is the original
point $(\pi_y(g(p)),h) \in R_i$. The argument above therefore shows that
$\Pi_y(q(f(B_i)))$ contains $R_i' - \Pi_y(q(K))$, a subset of $R_i$ of
full measure. 

Thus $\area\Pi_y(q(f(B_i))) \geq \area(R_i)$. 
Since $\Pi_y$ is area-decreasing and $q$ locally isometric, we conclude
that $\RArea(f\vert_{B_i}) \geq \area(R_i)$. By a similar argument,
$\RArea(f\vert_{A_i}) \geq \area(Q_i)$. 
\end{proof}

\subsection*{The bound}
We can now determine an upper bound for $\Delta^{(2)}(x)$. Assembling
Lemma \ref{foldbound2} and Propositions \ref{lowpart2}, \ref{highpart2} and
consolidating constants, we find that 
\begin{equation}\label{finalbound} 
\RVol(g) \ \leq \ \left(\frac{1 + \lambda(\mu + 1)}{\ln(\lambda
\mu)}\right) \RArea(f)^{2 + \log_{\lambda}(\mu)}. 
\end{equation}
Recall that all $3$-cells of $\widetilde{X}$ have the same volume $V$ (and
hence $\Vol^3(g) = \frac{1}{V}\RVol(g)$). Let $C$ be the largest 
Riemannian area of a $2$-cell of $\widetilde{X}$ (or equivalently, of
$X$). Then $\RArea(f) \leq C \Vol^2(f)$, and by \eqref{finalbound} we have 
\begin{equation*}
\Vol^3(g) \ \leq \ \left(\frac{1 + \lambda(\mu + 1)}{V \ln(\lambda
\mu)}\right) (C \Vol^2(f))^{2 + \log_{\lambda}(\mu)}. 
\end{equation*}
Therefore $\FVol^W(f) \leq D (\Vol^2(f))^{2 + \log_{\lambda}(\mu)}$ for a 
constant $D$ depending only on the original matrix $A$ (which determined
$\lambda$, $\mu$, and the geometry of $\widetilde{X}$). Since the
$3$-manifold $W$ was arbitrary, we have now established that
$\Delta^{(2)}(x) \leq D x^{2 + \log_{\lambda}(\mu)}$, and therefore
$\delta^{(2)}(x) \preceq \Delta^{(2)}(x) \preceq x^{2 + \log_{\lambda}(\mu)}$.

\section{The lower bound}\label{lowersec}

To establish a lower bound for $\delta^{(2)}(x)$ we want a sequence
of embedded balls $B_n \subset \widetilde{X}$ whose volume growth is as
large as possible, relative to the growth of boundary area. The optimal
shape is a ball made from two half-balls, each contained in a copy of $M$
inside $\widetilde{X}$, joined along their bottom faces. The half-balls
in $M$ will need to have large volume compared to ``upper'' boundary
area. 

For the half-balls, we begin by defining optimally proportioned regions
$R_n \subset M$, which are easy to measure in the Riemannian
metric. Then we approximate these regions combinatorially by subcomplexes
$S_n$.

\subsection*{Extremal Riemannian regions} 
In the coordinates of $M$, define
\[R_n \ = \ [0, \lambda^n] \times [0, (\lambda \mu)^n] \times [0,n].\] 
The volume of $R_n$ is easily computed by integration. Each horizontal
slice $[0, \lambda^n] \times [0, (\lambda \mu)^n] \times z$ has area
$\lambda^n (\lambda \mu)^n (\lambda \mu)^{-z}$, and integrating in the
$z$-coordinate yields 
\begin{equation} \label{volRn}
\RVol(R_n) \ = \ \frac{1}{\ln(\lambda \mu)} (\lambda^n(\lambda \mu)^n -
\lambda^n). 
\end{equation}
Recall that $\lambda\mu = \det(A) \geq 2$. If $n \geq 1$ then
$\frac{1}{2}(\lambda \mu)^n \geq 1$, whence $(\lambda \mu)^n - 1 
\geq \frac{1}{2}(\lambda \mu)^n$. Together with \eqref{volRn} this
implies 
\begin{align} \label{volRn2}
\RVol(R_n) \ &\geq \ \frac{1}{2\ln(\lambda \mu)} \lambda^n (\lambda
\mu)^n \notag \\
\ &= \ \frac{1}{2\ln(\lambda \mu)}\bigl(\lambda^n\bigr)^{2 +
\log_{\lambda}(\mu)} 
\end{align}
for $n \geq 1$. 

Next we consider the areas of the various faces of $R_n$. The top face
has area $\lambda^n$ (taking $z = n$, above). Next, the segment $[0,
\lambda^n] \times y \times z$ has length $\lambda^n
\lambda^{-z}$. Integrating with respect to $z$, we find that the faces
$[0,\lambda^n] \times 0 \times [0,n]$ and $[0,\lambda^n] \times
(\lambda\mu)^n \times [0,n]$ each have area
$\frac{1}{\ln(\lambda)}(\lambda^n - 1)$. By a similar computation, the
other two vertical faces each have area $\frac{1}{\ln(\mu)}\lambda^n
(\mu^n - 1) = \frac{1}{\ln(\mu^{-1})}\lambda^n (1 - \mu^n)$. Since $\mu <
1$, this quantity is less than $\frac{1}{\ln(\mu^{-1})}\lambda^n$. Now
let $\partial^+ R_n$ denote the union of the five faces (omitting the
bottom face) of $R_n$. We have shown that 
\begin{equation}\label{areaRn}
\RArea(\partial^+ R_n) \ \leq \ \bigl(1 + (2/\ln\lambda) -
(2/\ln\mu)\bigr) \lambda^n. 
\end{equation}

\subsection*{Extremal combinatorial regions}
Recall that $D$ is the matrix $BAB^{-1} = \big( \, {}^{\lambda}_0 \
{}^0_{\mu} \, \big)$, and $\Gamma$ is the lattice $B(\Z \times
\Z)$, preserved by $D$. 
Fix any standard copy of $M$ inside $\widetilde{X}$, corresponding to a
line $L \subset T$. Then $M$ is a subcomplex of $\widetilde{X}$, and we
need to understand its cell structure. Note that $M$ is a union of
subcomplexes $\R^2 \times [i-1,i]$ for $i \in \Z$. Consider the
subcomplex $\R^2 \times [0,1]$. Possibly after a horizontal translation,
the closed $3$-cells are the sets $\gamma(Q) \times [0,1]$, for $\gamma$
in $\Gamma$ (recall that $Q$ is a fundamental domain for $\Gamma$ acting
on $\R^2$). Figure
\ref{fig:Q} shows the top and bottom faces of one of these $3$-cells, in
the case of no translation. 

To be more specific, let $\Gamma'$ be the lattice $D^{-1}(\Gamma)$, and
note that $\Gamma'$ contains $\Gamma$ as a subgroup of index $d$. Then the
$3$-cells of $\R^2\times [0,1]$ are the sets $\gamma(Q) \times [0,1]$
where $\gamma$ ranges over a single coset of $\Gamma$ in $\Gamma'$. 

Continuing upward, the closed $3$-cells of $\R^2 \times [i-1, i]$
are the sets $\gamma(D^{i-1}(Q)) \times [i-1, i]$, where $\gamma$ ranges
over a coset of $D^{i-1}(\Gamma)$ in $\Gamma'$. The choice of coset
depends on the path in $T$ followed by $L$ from height $0$ to height
$i$. (There are $d^i$ such paths, and cosets.) Thus, the various copies of
$M$ inside $\widetilde{X}$ have differing cell structures (with respect
to the standard coordinates), though at each height they agree up to
horizontal translation. 

For $i = 1, 2, \ldots$ let $\Lambda_i \subset \R^2$ be the union of the
sides of $\gamma(D^{i-1}(Q))$ for $\gamma$ in the appropriate coset of
$D^{i-1}(\Gamma)$ in $\Gamma'$. Then $\Lambda_i \times i$ is a subcomplex
of $M$, and in fact, so is $\Lambda_i \times [i-1, i]$. This latter
subcomplex is the smallest subcomplex containing the vertical $1$- and
$2$-cells of $\R^2 \times [i-1, i]$.

\begin{definition}
Let $w$ be the diameter of $Q$ (in $\R^2$, with the Euclidean
metric). There is a constant $k$ such that every horizontal or vertical
line segment of length $w$ intersects $\Lambda_1$ in at most $k$
points. We will call $k$ the \emph{backtracking constant} for
$\widetilde{X}$. 
\end{definition}

\begin{lemma}\label{striplemma} 
Let $W \subset \R^2$ be a region of the form $[a,a+w] \times \R$ or\/ $\R
\times [a,a+w]$. Let $\pi \co W \to \R$ be projection onto the
$\R$ factor. Then $W \cap \Lambda_1$ contains a properly embedded line
$\ell$, and the restricted map $\pi \co \ell \to \R$ is at most $k$-to-one. 
\end{lemma}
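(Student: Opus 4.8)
The plan is to reduce the lemma to a single geometric fact: \emph{$W\cap\Lambda_1$ contains a properly embedded line}. Granting this, the ``at most $k$-to-one'' conclusion is immediate. Say $W=[a,a+w]\times\R$ and $\pi$ is the projection to the $\R$ factor, i.e.\ the second coordinate. For any $c\in\R$, the fiber $\pi^{-1}(c)\cap W$ is a horizontal segment of length $w$, so $\pi^{-1}(c)\cap\ell\subset\pi^{-1}(c)\cap\Lambda_1$ has at most $k$ points by the definition of the backtracking constant $k$. The case $W=\R\times[a,a+w]$ is identical with horizontal and vertical interchanged (use vertical segments of length $w$). So the entire content of the lemma is the existence of $\ell$.

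To produce $\ell$ I would argue as follows, in the case $W=[a,a+w]\times\R$ (the other case being symmetric). Recall that $\Lambda_1$ is the $1$-skeleton of a tiling of $\R^2$ by translates of the parallelogram $Q$, that $w$ is the diameter of $Q$, and that the edges of $Q$ are not parallel to the coordinate axes. Let $\mathcal C$ be the set of closed tiles $\tau$ of this tiling with $\tau\cap\{x\ge a+w\}\neq\emptyset$, and put $U=\bigcup_{\tau\in\mathcal C}\tau$. I claim $\ell:=\partial U$ (topological frontier) does the job. First, $\partial U$ is a subcomplex of $\Lambda_1$, being the frontier of a union of tiles. Second, $\partial U\subset W$: if $p\in\partial U$ had $p_x>a+w$ then a whole neighborhood of $p$ would lie in $\{x\ge a+w\}\subset U$, impossible; and if $p_x<a$ then a tile of $\mathcal C$ meeting a neighborhood of $p$ would contain a point with $x<a$ and a point with $x\ge a+w$, forcing its diameter to exceed $w$, impossible. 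Hence $a\le p_x\le a+w$.

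The remaining work, and the main obstacle, is to show that $\partial U$ is a disjoint union of properly embedded lines (hence in particular contains one). There are three points. (i) $\partial U$ is a $1$-manifold without boundary: at each vertex $v$ of $\Lambda_1$ exactly four tiles meet, and one checks that the subfamily of these four lying in $\mathcal C$ is never an ``alternating'' opposite pair (if $\{c_i\}$ are the corners of $Q$, membership of the tile at $v$ with corner $c_i$ in $\mathcal C$ is governed by the single inequality $(c_i)_x\le v_x+\max_Q x-(a+w)$, and one pair of opposite corners of a parallelogram can never have both $x$-coordinates strictly below both of the other pair's) --- this is exactly where the hypothesis on the edge directions of $Q$ enters --- so precisely two edges of $\Lambda_1$ at $v$ lie in $\partial U$. (ii) $\partial U$ has no circle component: $U$ is connected (each tile of $\mathcal C$ meets the connected set $\{x\ge a+w\}\subset U$), and $\R^2\setminus U$ is connected (a tile \emph{not} in $\mathcal C$ stays out of $\mathcal C$ after any translation by an edge vector of $Q$ with positive $x$-component, so it is joined through a ``row'' of such tiles to the half-plane $\{x\le a-w\}$, all of which lies in $\R^2\setminus U$); a circle in $\partial U$ would bound a disk whose presence would disconnect one of $U$, $\R^2\setminus U$. (iii) $\partial U$ is nonempty and noncompact: for every $c$ the point $(a+w,c)$ lies in $U$ while $(a-2w,c)$ does not, so each horizontal line meets $\partial U$, and $\partial U$ is therefore unbounded. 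Combining (i)--(iii), $\partial U$ is a nonempty disjoint union of properly embedded lines; let $\ell$ be any component.

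I expect step (ii) --- verifying that the jagged frontier $\partial U$ encloses no bounded region --- to be the fussiest part, and the ``leftward-closed'' property of $\mathcal C$ is the observation that makes it clean. Everything else is routine point-set topology together with the bound ``$w=$ diameter of $Q$'', which is precisely what makes a strip of width $w$ wide enough for the construction to fit inside it.
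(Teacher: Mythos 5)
Your proposal is correct in substance, but it follows a genuinely different route from the paper's proof, which is a two-line connectivity argument: the components of $\R^2 - \Lambda_1$ are open isometric copies of the interior of $Q$, hence open sets of diameter $w$, and such a set cannot disconnect the width-$w$ strip $W$; therefore $W \cap \Lambda_1$ is connected and contains a line joining the two ends of $W$, and the $k$-to-one property is read off from the fibers of $\pi$, exactly as you do. You instead exhibit the line explicitly as a component of the frontier $\partial U$ of the union $U$ of the closed tiles meeting the half-plane $\{x \geq a+w\}$, and verify that $\partial U$ is a properly embedded $1$-manifold inside $W \cap \Lambda_1$ with no circle components. Your checks are sound: the diameter bound gives $\partial U \subset W$; the opposite-corner inequality rules out the bad vertex configuration (though note that this inequality holds for every parallelogram, so it is not really where the non-axis-parallel hypothesis enters --- that hypothesis is rather what makes the backtracking constant finite); and the clopen/Jordan argument excluding circles works once one knows that $U$ and $\R^2 \setminus U$ are connected and unbounded. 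One slip to fix: in proving $\R^2 \setminus U$ connected you should translate a tile outside your family by an edge vector with \emph{negative} $x$-component, so that its maximal $x$-coordinate decreases, the row of translates stays outside the family, and it marches into $\{x < a\} \subset \R^2 \setminus U$; with a positive $x$-component, as written, the translates eventually enter the family. As for what each approach buys: the paper's argument is much shorter and directly produces a line joining the two ends of the strip, which is the form in which the lemma is used later (to assemble the four lines into a quadrilateral enclosing the rectangle); your construction is more explicit about why a properly embedded line exists at all, and the same clopen argument that excludes circles also excludes components of $\partial U$ whose $\pi$-image is a half-line, so each component of $\partial U$ in fact joins the two ends of the strip and the downstream application is equally well served.
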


\begin{proof}
The components of $\R^2 - \Lambda_1$ are isometric copies of the interior
of $Q$. For the first statement, note that an open set of diameter $w$
cannot disconnect $W$, and so $W \cap \Lambda_1$ is connected and
contains a line joining the two ends of $W$. The second statement is
clear, since the fibers of $\pi$ are horizontal or vertical segments of
length $w$. 
\end{proof}

Applying the map $D^{i-1}$ (and possibly a translation) to Lemma
\ref{striplemma} yields the following result. Note that $D$ preserves the
horizontal and vertical foliations of $\R^2$ by lines. In particular,
$D^{i-1}$ takes fibers of $\pi$ to fibers. 

\begin{lemma}\label{striplemma2} 
Let $W \subset \R^2$ be a region of the form $[a,a+\lambda^{i-1}w] \times
\R$ or $\R \times [a,a+\mu^{i-1} w]$. Let $\pi \co W \to \R$ be
projection onto the $\R$ factor. Then $W \cap \Lambda_i$ contains a
properly embedded line $\ell$, and the restricted map $\pi \co \ell \to
\R$ is at most $k$-to-one. \qed
\end{lemma}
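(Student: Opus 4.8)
The plan is to derive Lemma~\ref{striplemma2} from Lemma~\ref{striplemma} by a linear change of coordinates, namely by applying the diagonal map $D$, which acts on $\R^2$ by $(x,y)\mapsto(\lambda x,\mu y)$. First I would pin down the relation between $\Lambda_i$ and $\Lambda_1$. By construction $\Lambda_1$ is the union of the sides of the parallelograms $\gamma(Q)$ as $\gamma$ ranges over a coset of $\Gamma$ in $\Gamma'$, while $\Lambda_i$ is the analogous union built from $\gamma(D^{i-1}(Q))$ with $\gamma$ in a coset of $D^{i-1}(\Gamma)$ in $\Gamma'$. Since $D^{i-1}$ carries $Q$ to $D^{i-1}(Q)$ and $\Gamma$ to $D^{i-1}(\Gamma)$ (using that $D$ preserves $\Gamma$), it carries $\Lambda_1$ onto such a union for \emph{some} coset of $D^{i-1}(\Gamma)$ in $\Gamma'$; this need not be the coset determined by the tree path, but any two such cosets differ by an element of $\Gamma'$, so there is a translation $T$ of $\R^2$ making $\phi:=T\circ D^{i-1}$ satisfy $\phi(\Lambda_1)=\Lambda_i$. (This is the translation alluded to in the sentence preceding the lemma.)

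Next I would pull the strip $W$ back through $\phi$. The map $\phi^{-1}=D^{-(i-1)}\circ T^{-1}$ is the composite of a translation with the diagonal scaling $(x,y)\mapsto(\lambda^{-(i-1)}x,\mu^{-(i-1)}y)$. Both of these carry a strip of the form $[b,b+c]\times\R$ to another such strip, with $c$ unchanged by the translation and multiplied by $\lambda^{-(i-1)}$ by the scaling, and likewise carry strips of the form $\R\times[b,b+c]$ to strips of the same type with $c$ multiplied by $\mu^{-(i-1)}$. Hence $W':=\phi^{-1}(W)$ has the form $[a',a'+w]\times\R$ when $W=[a,a+\lambda^{i-1}w]\times\R$, and the form $\R\times[a',a'+w]$ when $W=\R\times[a,a+\mu^{i-1}w]$. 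Either way Lemma~\ref{striplemma} applies to $W'$ and produces a properly embedded line $\ell'\subset W'\cap\Lambda_1$ on which the projection $\pi$ onto the unbounded coordinate is at most $k$-to-one. Put $\ell=\phi(\ell')$; since $\phi$ is a homeomorphism of $\R^2$ with $\phi(W')=W$ and $\phi(\Lambda_1)=\Lambda_i$, the set $\ell$ is a properly embedded line contained in $W\cap\Lambda_i$.

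The only remaining point --- and the one place where care is needed --- is to check that $\pi$ is still at most $k$-to-one on $\ell$. Because $D$ is diagonal and $T$ is a translation, $\phi$ has the form $(x,y)\mapsto(\lambda^{i-1}x+c_1,\mu^{i-1}y+c_2)$, so it sends each fiber of $\pi$ over $W'$ (a horizontal or vertical segment) to a fiber of $\pi$ over $W$, inducing an affine bijection $\psi\co\R\to\R$ of the base with $\phi(\pi^{-1}(t)\cap W')=\pi^{-1}(\psi(t))\cap W$. This is precisely the remark, noted before the lemma, that $D^{i-1}$ takes fibers of $\pi$ to fibers of $\pi$. Consequently, for every $s\in\R$,
\[ \ell\cap\pi^{-1}(s)\ =\ \phi\bigl(\ell'\cap\pi^{-1}(\psi^{-1}(s))\bigr), \]
and since $\phi$ is injective this set has at most $k$ points; thus $\pi\co\ell\to\R$ is at most $k$-to-one, as required. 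I expect no serious obstacle here: the whole argument is bookkeeping, with the only substantive ingredient being the compatibility of $\pi$ with $D^{i-1}$, which is what makes the at-most-$k$-to-one condition survive the change of coordinates.
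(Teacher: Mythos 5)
Your proposal is correct and follows exactly the paper's route: the paper's proof is the one-line observation that applying $D^{i-1}$ (and possibly a translation) to Lemma~\ref{striplemma} gives the result, using that $D^{i-1}$ takes fibers of $\pi$ to fibers, which is precisely the change-of-coordinates argument you spell out. Your write-up simply makes explicit the bookkeeping (the identification $\phi(\Lambda_1)=\Lambda_i$ up to a translation, the rescaling of the strip widths, and the preservation of the at-most-$k$-to-one property) that the paper leaves implicit.
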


Now we can proceed to define subcomplexes approximating the regions
$R_n$. Given an integer $n$, we will define ``slabs'' $S_{i,n} \subset
\R^2 \times [i-1, i]$ for $i$ between $1$ and $n$. The union $\bigcup_i
S_{i,n}$ will contain $R_n$, and will have comparable volume and surface
area (the latter of which is controlled by the backtracking constant
$k$). The slabs will not fit together perfectly: there will be under- and
over-hanging portions, but the additional surface area arising in this
way is not excessive. 

Fix $n\in \Z_+$. For $i$ between $1$ and $n$, consider the four strips
\begin{align*}
W^1_i \ &= \ \R \times [-\mu^{i-1}w, 0] \\
W^2_i \ &= \ [\lambda^n, \lambda^n + \lambda^{i-1}w] \times \R \\
W^3_i \ &= \ \R \times [(\lambda\mu)^n, (\lambda\mu)^n + \mu^{i-1}w] \\
W^4_i \ &= \ [-\lambda^{i-1}w, 0] \times \R 
\end{align*}
which surround the rectangle $[0, \lambda^n] \times [0,
(\lambda\mu)^n]$. By Lemma \ref{striplemma2}, each of these strips
contains a properly embedded line in $\Lambda_i$, projecting to the $x$-
or $y$-axis in a $k$-to-one fashion, at most. Choose segments $\ell^j_i
\subset W^j_i$ in these lines which meet each other only in their
endpoints, forming an embedded quadrilateral in $\Lambda_i$ enclosing
$[0, \lambda^n] \times [0, (\lambda\mu)^n]$. Let $D_i$ be the closed region
bounded by this quadrilateral, and define the \emph{slab} $S_{i,n}$ to be
the subcomplex $D_i \times [i-1,i] \subset M$. Let $S_n = \bigcup_{i=1}^n
S_{i,n}$. 

Let $W_{i,n}$ be the rectangle delimited by the outermost sides of the
strips $W^1_i$, $W^2_i$, $W^3_i$, $W^4_i$ and note that $W_{i,n}$
contains $D_i$. The maximum width of these rectangles is $\lambda^n +
2\lambda^{n-1}w = \lambda^n(1 + 2w/\lambda)$, and the maximum height is
$(\lambda\mu)^n + 2w \leq (\lambda\mu)^n(1 + 2w)$. Let $\kappa$ be the
larger of $\log_{\lambda}(1 + 2w/\lambda)$ and $\log_{\lambda\mu}(1 +
2w)$. Then the rectangle with lower-left corner at $(- \lambda^{n-1} w, -
w)$, of width $\lambda^{n + \kappa}$ and height $(\lambda\mu)^{n +
\kappa}$, contains $W_{i,n}$ for all $i$. Let $R'_{n+\kappa}$ be $R_{n
+ \kappa}$, translated by $-\lambda^{n-1}w$ in the $x$-direction and by
$-w$ in the $y$-direction. Then we have 
\begin{equation*}
R_n \ \subset \ S_n \ \subset \ R'_{n + \kappa}. 
\end{equation*}

Let $\partial^+ S_n$ denote the largest subcomplex of the boundary of
$S_n$ which does not meet the interior of the base of $R_n$ (that is,
$(0, \lambda^n) \times (0, (\lambda\mu)^n) \times 0$).  Note that
$\partial^+ S_n$ has three parts: the \emph{top}, $D_{n}$; the
\emph{vertical part}, made of the sets $\ell^j_i \times [i-1,i]$; and the
\emph{horizontal part}, contained in the union of the annuli
$\bigl(W_{i,n} \times i\bigr) - \bigl((0, \lambda^n) \times (0, (\lambda
\mu)^n) \times i\bigr)$, for $i = 0, \ldots, n-1$. This last part
contains the horizontal $2$-cells of height $i$ in the symmetric
difference $(D_i \times i) \bigtriangleup (D_{i-1} \times i)$, where the
slabs fail to join perfectly. 

\begin{lemma}\label{areaSn}
There is a constant $C$ such that the Riemannian area of the top and
vertical parts of $\partial^+ S_n$ is at most $C \RArea(\partial^+
R'_{n+\kappa})$. 
\end{lemma}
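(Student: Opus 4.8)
The plan is to prove the stronger quantitative statement that the Riemannian area of the top and vertical parts of $\partial^+ S_n$ is at most $C'\lambda^n$ for a constant $C'$ depending only on $A$, and then to observe that this already gives the lemma. Indeed, $R'_{n+\kappa}$ is obtained from $R_{n+\kappa}$ by a translation in the $x$- and $y$-directions, which are isometries of $M$, so its top face $[0,\lambda^{n+\kappa}] \times [0,(\lambda\mu)^{n+\kappa}] \times \{n+\kappa\}$ has Riemannian area exactly $\lambda^{n+\kappa}$ (integrate the horizontal area element $(\lambda\mu)^{-z}\,dx\,dy$ at $z = n+\kappa$); since $\kappa > 0$ this face lies in $\partial^+R'_{n+\kappa}$ and already shows $\RArea(\partial^+ R'_{n+\kappa}) \geq \lambda^{n+\kappa} \geq \lambda^n$. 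So $C = C'$ works.

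For the top face, $D_n$ lies in the Euclidean rectangle $W_{n,n}$, of width at most $\lambda^n(1+2w/\lambda)$ and height at most $(\lambda\mu)^n(1+2w)$, and it sits at height $z = n$, where the horizontal area element of $M$ is $(\lambda\mu)^{-n}\,dx\,dy$. Hence $\RArea(D_n) \leq (\lambda\mu)^{-n}\cdot\lambda^n(1+2w/\lambda)\cdot(\lambda\mu)^n(1+2w) = (1+2w/\lambda)(1+2w)\lambda^n$, which has the required form.

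For a vertical piece $\ell^j_i \times [i-1,i]$, the surface is a product orthogonal to $\partial/\partial z$, so its area form is the length element of the horizontal metric at height $z$ wedged with $dz$; since $\ell^j_i$ is a finite union of Euclidean segments and $\sqrt{\lambda^{-2z}\,dx^2 + \mu^{-2z}\,dy^2} \leq \lambda^{-z}\abs{dx} + \mu^{-z}\abs{dy}$, integration over $z\in[i-1,i]$ gives
\[
\RArea(\ell^j_i \times [i-1,i]) \ \leq \ V_x(\ell^j_i)\!\int_{i-1}^{i}\!\lambda^{-z}\,dz \ + \ V_y(\ell^j_i)\!\int_{i-1}^{i}\!\mu^{-z}\,dz,
\]
where $V_x(\ell^j_i) = \int_{\ell^j_i}\abs{dx}$ and $V_y(\ell^j_i) = \int_{\ell^j_i}\abs{dy}$ are the total $x$- and $y$-variations, and the two $z$-integrals are bounded by fixed constants times $\lambda^{-(i-1)}$ and $\mu^{-i}$ respectively. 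It remains to estimate the variations, which is the heart of the argument. For $j = 1,3$ — the segments lying in the horizontal strips $W^1_i, W^3_i$ of $y$-width $\mu^{i-1}w$ — Lemma~\ref{striplemma2} says the projection of $\ell^j_i$ to the $x$-axis is at most $k$-to-one, so by the coarea formula $V_x(\ell^j_i) \leq k\cdot(\text{width of }W_{i,n}) \leq k(1+2w/\lambda)\lambda^n$; and covering the $x$-extent of $\ell^j_i$ by $O(\lambda^{n-i+1}/w)$ boxes of size $\lambda^{i-1}w \times \mu^{i-1}w$, and using that $\Lambda_i$ is a translate of $D^{i-1}$ applied to the $\Gamma$-periodic, locally finite grid $\Lambda_1$ (so $\Lambda_i$ meets each such box in a set of total $y$-variation at most a constant times $\mu^{i-1}$), one gets $V_y(\ell^j_i) \leq (\text{const})\,\lambda^{n-i+1}\mu^{i-1}$. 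Substituting yields $\RArea(\ell^j_i\times[i-1,i]) \leq (\text{const})\,\lambda^{n-i+1}$, and $\sum_{i=1}^{n}\lambda^{n-i+1} \leq \frac{\lambda}{\lambda-1}\lambda^n$. For $j = 2,4$ — the segments in the vertical strips $W^2_i, W^4_i$ of $x$-width $\lambda^{i-1}w$ — the same reasoning with the roles of $x$ and $y$ (and of $\lambda$ and $\mu$) interchanged gives $V_y(\ell^j_i) \leq k(1+2w)(\lambda\mu)^n$ and $V_x(\ell^j_i) \leq (\text{const})\,(\lambda/\mu)^{i-1}(\lambda\mu)^n$, whence $\RArea(\ell^j_i\times[i-1,i]) \leq (\text{const})\,\mu^{n-i}\lambda^n$; since $\mu < 1$ this is at most a constant times $\lambda^n$ for each $i$, and $\sum_{i=1}^{n}\mu^{n-i}\lambda^n \leq \frac{1}{1-\mu}\lambda^n$. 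Adding the contributions of all $4n$ vertical pieces and the top face produces the bound $C'\lambda^n$.

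The step I expect to be the main obstacle is the bound on the \emph{transverse} variation ($V_y(\ell^j_i)$ for the horizontal strips, $V_x(\ell^j_i)$ for the vertical ones). The $k$-to-one projection property of Lemma~\ref{striplemma2} controls the variation only in the long direction of each strip; in the short direction one must exploit the self-similarity $\Lambda_i = D^{i-1}\Lambda_1$ together with local finiteness of the grid $\Lambda_1$, so that the transverse wandering of $\ell^j_i$ inside a band of width $\mu^{i-1}w$ (resp.\ $\lambda^{i-1}w$) contributes a total variation that decays like $\mu^{i-1}$ (resp.\ grows only like $(\lambda/\mu)^{i-1}$) — slowly enough that, after pairing against the factors $\lambda^{-(i-1)}$ and $\mu^{-i}$ coming from the metric and summing over $i$, everything remains bounded by a constant multiple of $\lambda^n$. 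One should also check that the box-counting is valid uniformly over $i$, including $i$ close to $n$ where the number of boxes may be $O(1)$, but this only affects the constants.
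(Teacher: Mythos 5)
Your argument is correct, but it handles the crucial vertical part differently from the paper. Both proofs use the $k$-to-one projection property from Lemma~\ref{striplemma2}, but where you split the horizontal length element as $\sqrt{\lambda^{-2z}dx^2+\mu^{-2z}dy^2} \leq \lambda^{-z}\abs{dx}+\mu^{-z}\abs{dy}$, control the long-direction variation by the $k$-to-one property, control the transverse variation by box-counting against the self-similar grids $\Lambda_i$ (a translate of $D^{i-1}\Lambda_1$), and then sum explicit geometric series over $i$, the paper avoids all of this bookkeeping with a softer argument: it projects each piece $\ell^j_i\times[i-1,i]$ onto the corresponding coordinate plane (i.e.\ onto a side of $R'_{n+\kappa}$), observes that this projection is at most $k$-to-one, and that its Jacobian is bounded below by a uniform $J>0$ because there are only finitely many vertical $2$-cells modulo isometries of $M$ and these isometries preserve the relevant plane field; this gives the bound $k/J$ times the area of a side of $R'_{n+\kappa}$ in one stroke, with no summation over $i$, and the top is handled by translating $D_n$ up by $\kappa$ into the top face of $R'_{n+\kappa}$ (cost factor $(\lambda\mu)^\kappa$) rather than by your direct computation over $W_{n,n}$. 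The trade-offs: the paper's compactness argument is shorter and yields the stated comparison with $\partial^+R'_{n+\kappa}$ directly, but it silently relies on the fact that the sides of $Q$ are not parallel to the coordinate axes (otherwise the Jacobian of one of the coordinate projections would vanish on some cells), a fact the paper only records in Section~\ref{uppersec}; your variation/box-counting route does not need that fact and is fully quantitative, at the price of the more delicate transverse-variation estimate, which you correctly identify as the heart of the matter and which does go through by periodicity and local finiteness of $\Lambda_1$ together with the fact that $D^{i-1}$ scales $x$- and $y$-variations by $\lambda^{i-1}$ and $\mu^{i-1}$. Your reduction of the lemma to the bound $C'\lambda^n$ via $\RArea(\partial^+R'_{n+\kappa})\geq\lambda^{n+\kappa}$ is also fine, and is consistent with how the lemma is actually used in \eqref{areaBn}.
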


\begin{proof}
Translating $D_n$ upward by $\kappa$, it becomes a subset of the top face
of $R'_{n+\kappa}$. Therefore its area is at most $(\lambda\mu)^{\kappa}$
times the area of the top face of $R'_{n+\kappa}$.  Next consider the
coordinate projections of $\ell^j_i \times [i-1, i]$ onto the sides of
$R'_{n+\kappa}$. These maps are at most $k$-to-one, by the construction
of $\ell^j_i$. Moreover, the Jacobians of these maps are bounded below by
some $J>0$, independent of $n$. To see this, consider for example the
coordinate projection onto the $xz$-plane (the case of odd $j$). On each
closed vertical $2$-cell the Jacobian achieves a positive minimum, and
there are finitely many such cells modulo isometries of $M$. These
isometries preserve the $xz$-plane field, and hence also the Jacobian of
this projection. The case of the $yz$-projection is similar. Now the
Riemannian area of $\bigcup_{i=1}^n \ell^j_i \times [i-1,i]$ is at most
$k/J$ times the area of one of the four sides of $R'_{n+\kappa}$ (one
side for each $j$). The result follows with $C =
\max\{(\lambda\mu)^{\kappa}, k/J\}$.
\end{proof}

\begin{lemma}\label{areaSn2}
There is a constant $D$ such that the Riemannian area of the
horizontal part of $\partial^+ S_n$ is at most $D \lambda^n$. 
\end{lemma}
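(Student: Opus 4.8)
The plan is to bound the Riemannian area of the horizontal part of $\partial^+ S_n$ by estimating, for each $i$, the area of the annulus $\bigl(W_{i,n}\times i\bigr)-\bigl((0,\lambda^n)\times(0,(\lambda\mu)^n)\times i\bigr)$ that contains it, and then summing over $i=0,\dots,n-1$. First I would record the Euclidean dimensions of $W_{i,n}$: since it is delimited by the outermost sides of the four strips $W^1_i,\dots,W^4_i$, whose thicknesses are $\lambda^{i-1}w$ (for the vertical strips $W^2_i,W^4_i$) and $\mu^{i-1}w$ (for the horizontal strips $W^1_i,W^3_i$), its width is $\lambda^n+2\lambda^{i-1}w$ and its height is $(\lambda\mu)^n+2\mu^{i-1}w$. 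Subtracting the area $\lambda^n(\lambda\mu)^n$ of the inner rectangle, the Euclidean area of the $i$-th annulus is
\[ 2w\mu^{i-1}\lambda^n \ + \ 2w\lambda^{i-1}(\lambda\mu)^n \ + \ 4w^2\lambda^{i-1}\mu^{i-1}. \]

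Next I would convert to Riemannian area. The $i$-th annulus lies in the horizontal plane $\R^2\times i$ of $M$, where the area element is $(\lambda\mu)^{-i}\,dx\,dy$. Multiplying the three terms above by $(\lambda\mu)^{-i}$ gives $2w\mu^{-1}\lambda^{n-i}$, $2w\lambda^{-1}\lambda^n\mu^{n-i}$, and $4w^2(\lambda\mu)^{-1}$, respectively. This is the heart of the estimate: the weight $(\lambda\mu)^{-i}$ exactly absorbs the exponential growth in $i$ of the frame thickness. Summing over $i=0,\dots,n-1$, the first family forms a geometric series in $\lambda^{-1}$ with sum $O(\lambda^n)$; the second is a fixed multiple of $\lambda^n$ times $\sum_{k\geq 1}\mu^k$, which converges since $\mu<1$, and so is again $O(\lambda^n)$; and the third contributes only $O(n)=o(\lambda^n)$. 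Collecting the resulting constants, each of which depends only on $\lambda$, $\mu$, and $w$ (hence only on $A$), produces the asserted bound $D\lambda^n$ on the Riemannian area of the horizontal part of $\partial^+S_n$.

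I expect the only obstacle here to be bookkeeping rather than anything conceptual. One must confirm that the horizontal $2$-cells in the symmetric differences $(D_i\times i)\bigtriangleup(D_{i\pm 1}\times i)$, where adjacent slabs fail to register, are genuinely contained in the stated annuli --- this uses the strip construction of the boundary curves $\ell^j_i$ together with the containments $R_n\subset S_n\subset R'_{n+\kappa}$ --- and one must keep straight which index of $W$ and which power of $w$ governs each annulus. Note that, unlike Lemma~\ref{areaSn}, this estimate does not need the backtracking constant $k$, since we only bound the area of a region \emph{containing} the horizontal part, not any combinatorial complexity of the part itself.
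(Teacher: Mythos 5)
Your proposal is correct and follows essentially the same route as the paper: bound the horizontal part by the annuli $\bigl(W_{i,n}\times i\bigr)-\bigl((0,\lambda^n)\times(0,(\lambda\mu)^n)\times i\bigr)$, compute each annulus's Riemannian area via the height-$i$ area element $(\lambda\mu)^{-i}\,dx\,dy$ (yielding the same three terms $2w\mu^{-1}\lambda^{n-i}+2w\lambda^{n-1}\mu^{n-i}+4w^2(\lambda\mu)^{-1}$), and sum over $i$ to get $O(\lambda^n)$ with constants depending only on $\lambda,\mu,w$. Your treatment of the middle term by the convergent series $\sum_{k\ge 1}\mu^k$ is in fact a slightly cleaner bookkeeping of the summation than the paper's displayed estimate, but the argument is the same.
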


\begin{proof}
Let $A_{i,n}$ be the annular region $\bigl(W_{i,n} \times i\bigr) -
\bigl( (0, \lambda^n) \times (0, (\lambda \mu)^n) \times i\bigr)$. Then
\begin{align*}
\RArea(A_{i,n}) \ &= \ (\lambda^{n-i} + 2w/\lambda)(\lambda^n \mu^{n-i} +
2w/\mu) - \lambda^{n-i} \lambda^n \mu^{n-i} \\
&= \ 2w\lambda^{n-1} \mu^{n-i} + 2w \lambda^{n-i} \mu^{-1} + 4w^2(\lambda
\mu)^{-1} \\
&\leq \ 2w(\lambda^{n-1} + \lambda^{n-i}\mu^{-1}) + 4w^2. 
\end{align*}
Hence the area of the horizontal part is at most 
\begin{align*}
\sum_{i=0}^{n-1} \RArea( A_{i,n}) \ &\leq \ 2w\bigl(\lambda^{n-1} + 
\lambda(\lambda^n - 1)/\mu(\lambda -1)\bigr)  + 4w^2n \\
&\leq \ 2w\bigl(\lambda^{-1} + \lambda/\mu(\lambda -1)\bigr) \lambda^n +
4w^2n. 
\end{align*}
Lastly, $4w^2 n$ is less than $\frac{4w^2}{\ln \lambda} \lambda^n$, thus 
establishing the result. 
\end{proof}

\subsection*{The bound} 
Recall that $\widetilde{X}$ contains isometric copies of $M$,
corresponding to lines in $T$. Choose two such lines $L_0$, $L_1$ which
coincide at negative heights and diverge at height $0$. Let $M_0$, $M_1$
be the corresponding copies of $M$ in $\widetilde{X}$. Let $S^i_n$ be
the subcomplex $S_n$ of $M_i$ constructed earlier (recall that the
contruction depended on the cell structure of $M_i$, which varies with
$i$). Let $B_n \subset \widetilde{X}$ be the subcomplex $S^0_n \cup
S^1_n$. It contains the two copies of $R_n$ in $M_0$ and $M_1$ (which
meet along their bottom faces), and its boundary is contained in
$\partial^+ S^0_n \cup \partial^+ S^1_n$.

Let $a$ be the minimum Riemannian area of a $2$-cell of
$\widetilde{X}$. Combining \eqref{areaRn} with Lemmas \ref{areaSn} and
\ref{areaSn2}, we have 
\begin{equation}\label{areaBn} 
 \Vol^2 (\partial B_n) \ \leq \ (2/a) \Bigl( C
\lambda^{\kappa}\bigl( 1 + (2/\ln \lambda) - (2/\ln \mu) \bigr) + D
\Bigr) \lambda^n.
\end{equation}
By \eqref{volRn2} we have 
\[\Vol^3(B_n) \ \geq \ \frac{1}{V \ln(\lambda\mu)}
\bigl(\lambda^n\bigr)^{2+\log_{\lambda}(\mu)}.\] Thus there is a constant
$E$ such that $\Vol^3(B_n) \geq E (\Vol^2(\partial B_n))^{2 +
  \log_{\lambda}(\mu)}$ for all $n$. By Remark \ref{embedded}, since
$S_n$ is embedded in $\widetilde{X}$, we have $\delta^{(2)}(x_n) \geq
E (x_n)^{2 + \log_{\lambda}(\mu)}$ for $x_n = \Vol^2(\partial
B_n)$. Lastly, it remains to show that the sequence $(x_n)$ is not too
sparse. Recall that the top $D_n$ of $\partial^+ S_n$ contains the top
face of $R_n$, and the latter has area $\lambda^n$. Thus $\Vol^2(\partial
B_n) \geq K \lambda^n$ for some constant $K$. Together with
\eqref{areaBn} this implies that the ratios $x_n/x_{n-1}$ are
bounded. According to Remark 2.1 of \cite{bbfs}, this property suffices
to conclude that $\delta^{(2)}(x) \succeq x^{2+\log_{\lambda}(\mu)}$.

\section{Proof of Theorem \ref{mainthm2}}\label{highersect}

Sections \ref{uppersec} and \ref{lowersec} established the proof of Theorem
\ref{mainthm1}. Next we consider the groups $G_{\Sigma^i A} \cong G_A
\times \Z^i$ and their $(i+2)$-dimensional Dehn functions. The following
definition is taken from \cite{bbfs}. 

\begin{definition}\label{embeddedreps} 
Let $G$ be a group of type $\mathcal{F}_{k+1}$ and geometric dimension at
most $k+1$. The $k$-dimensional Dehn function $\delta^{(k)}_G(x)$
\emph{has embedded representatives} if there is a finite aspherical
$(k+1)$-complex $X$, a sequence of embedded $(k+1)$-dimensional balls
$B_i \subset \widetilde{X}$, and a function $F(x) \simeq
\delta^{(k)}_G(x)$, such that the sequence given by $(n_i) =
(\Vol^k(\partial B_i))$ tends to infinity and is exponentially bounded,
and $\Vol^{k+1}(B_i) \geq F(n_i)$ for each $i$. 
\end{definition}

The Dehn functions $\delta^{(2)}(x)$ for the groups $G_A$ have embedded
representatives, as constructed in Section \ref{lowersec}. 
We also have the following result from \cite{bbfs}. 

\begin{proposition}\label{lowerGxZ} 
Let $G$ be a group of type $\mathcal{F}_{k+1}$ and geometric dimension at
most $k+1$. Suppose the $k$-dimensional Dehn function $\delta^{(k)}(x)$
of $G$ is equivalent to $x^s$ and has embedded representatives. Then $G
\times \Z$ has $(k+1)$-dimensional Dehn function
$\delta^{(k+1)}(x)\succeq x^{2 - 1/s}$, with embedded representatives. 
\end{proposition}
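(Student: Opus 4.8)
The plan is to push the embedded representatives of $\delta^{(k)}_G$ through the $\times\,\Z$ operation by thickening them in the new coordinate direction, and to use the homological least-volume criterion of Remark~\ref{embedded} to compute the filling volumes of the thickened spheres exactly. Let $X$ be the finite aspherical $(k+1)$-complex carrying the embedded balls $B_i\subset\widetilde X$, with $n_i=\Vol^k(\partial B_i)$ tending to infinity and exponentially bounded, $\Vol^{k+1}(B_i)\ge F(n_i)$ and $F\simeq\delta^{(k)}_G\simeq x^s$; since the $B_i$ are representatives of the Dehn function (and not arbitrarily inefficient fillings) we may also take $\Vol^{k+1}(B_i)\le C\,n_i^s$. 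Then $Y=X\times S^1$ is a finite aspherical $(k+2)$-complex with $\pi_1Y=G\times\Z$ and universal cover $\widetilde Y=\widetilde X\times\R$, which we give the product cell structure ($\R$ carrying its standard cell structure).

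For each $i$ set $t_i=\max\{1,\lfloor n_i^{\,s-1}\rfloor\}$, a positive integer (this uses $s\ge1$), view $[0,t_i]\subset\R$ as a subcomplex, and put $P_i=B_i\times[0,t_i]\subset\widetilde Y$. This is an embedded subcomplex homeomorphic to $B^{k+2}$, and its manifold boundary
\[ \Sigma_i \ = \ \partial P_i \ = \ \bigl(\partial B_i\times[0,t_i]\bigr) \cup \bigl(B_i\times\{0,t_i\}\bigr) \]
is an embedded $(k+1)$-sphere which is again a subcomplex. The inclusions $\Sigma_i\hookrightarrow\widetilde Y$ and $P_i\hookrightarrow\widetilde Y$ are combinatorial, hence admissible, and can be made transverse without changing their $0$-handles. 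Counting cells in the product gives
\[ \Vol^{k+1}(\Sigma_i) \ = \ n_i t_i + 2\,\Vol^{k+1}(B_i), \qquad \Vol^{k+2}(P_i) \ = \ \Vol^{k+1}(B_i)\cdot t_i, \]
so with $t_i$ comparable to $n_i^{\,s-1}$ and $\Vol^{k+1}(B_i)$ comparable to $n_i^{\,s}$ we get that $m_i:=\Vol^{k+1}(\Sigma_i)$ is comparable to $n_i^{\,s}$, while $\Vol^{k+2}(P_i)$ is comparable to $n_i^{\,2s-1}=(n_i^{\,s})^{\,2-1/s}$.

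The crux is that $P_i$ is a \emph{least-volume} filling of $\Sigma_i$. Since $\widetilde Y$ is contractible of dimension $k+2$, we have $H_{k+2}(\widetilde Y;\Z)=0$ and $C_{k+3}(\widetilde Y)=0$; and since $P_i$ is an embedded subcomplex the inclusion is injective on the interiors of its $0$-handles. Remark~\ref{embedded} (with its ``$n$'' equal to $k+1$) therefore applies and shows $\FVol(\Sigma_i)=\Vol^{k+2}(P_i)$. Consequently $\delta^{(k+1)}_{G\times\Z}(m_i)\ge\FVol(\Sigma_i)\ge c\,m_i^{\,2-1/s}$ for a constant $c>0$ and all $i$. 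The sequence $(m_i)$ tends to infinity and is exponentially bounded because $(n_i)$ is, and the ratios $m_i/m_{i-1}$ are bounded since $(n_i/n_{i-1})$ is; by Remark~2.1 of \cite{bbfs} the pointwise bounds upgrade to $\delta^{(k+1)}_{G\times\Z}(x)\succeq x^{\,2-1/s}$. Combining this with the matching upper bound obtained from Theorem~\ref{product} (applied to $\Delta^{(k)}_X(x)\preceq x^s$) one gets $\delta^{(k+1)}_{G\times\Z}\simeq x^{\,2-1/s}$, and then the embedded balls $P_i$, together with $F(x)=c\,x^{\,2-1/s}$, are exactly the data required by Definition~\ref{embeddedreps}; so $\delta^{(k+1)}_{G\times\Z}$ has embedded representatives.

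I expect the least-volume step to be the conceptual heart of the argument: a priori one has to exclude cleverer fillings of $\Sigma_i$ that wander in the extra $\R$-direction, and it is precisely there that the embeddedness of the $B_i$ (hence of $P_i$), together with the homological argument of Remark~\ref{embedded} in a contractible $(k+2)$-complex, is indispensable --- which is why ``embedded representatives'' is built into the hypothesis. The rest --- calibrating the thickness $t_i$ so that boundary volume and filling volume balance to the exponent $2-1/s$, the cell counts, and the non-sparseness of $(m_i)$ --- is routine.
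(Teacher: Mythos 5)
A preliminary remark: the paper itself offers no proof of Proposition \ref{lowerGxZ} --- it is quoted from \cite{bbfs} --- so there is no in-paper argument to compare you against line by line. Your reconstruction is the natural one and, in its essentials, the argument behind the cited result: thicken the embedded balls to $P_i=B_i\times[0,t_i]\subset\widetilde{X}\times\R$, use the homological criterion of Remark \ref{embedded} (valid since $\widetilde{X}\times\R$ is contractible of dimension $k+2$ and $P_i$ is an embedded subcomplex, hence injective on $0$-handles) to see that $\FVol(\partial P_i)=\Vol^{k+2}(P_i)$ exactly, and balance $t_i$ so that boundary volume and filling volume meet at the exponent $2-1/s$; the non-sparseness upgrade via Remark 2.1 of \cite{bbfs} is the same device used in Section \ref{lowersec}. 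One step you state but do not justify is $\Vol^{k+1}(B_i)\leq C n_i^{s}$: Definition \ref{embeddedreps} gives only a lower bound on $\Vol^{k+1}(B_i)$, and with your choice $t_i\approx n_i^{s-1}$ the exponent comparison (and the exponential boundedness of $(m_i)$) genuinely needs the upper bound. It is true, and the clean reason is the same homological argument applied inside $\widetilde{X}$: an embedded subcomplex ball is a least-volume filling of its boundary, so $\Vol^{k+1}(B_i)=\FVol(\partial B_i)\leq\delta^{(k)}_X(n_i)\preceq n_i^{s}$. With that inserted, your proof of $\delta^{(k+1)}_{G\times\Z}(x)\succeq x^{2-1/s}$ is complete and correct.

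The genuine gap is in the last clause. Definition \ref{embeddedreps} ties the witnessing function $F$ to the actual Dehn function of $G\times\Z$, so your balls $P_i$ are ``embedded representatives'' only once one knows the matching upper bound $\delta^{(k+1)}_{G\times\Z}(x)\preceq x^{2-1/s}$. You obtain this by applying Theorem \ref{product} to the assertion $\Delta^{(k)}_X(x)\preceq x^{s}$, but that is a strictly stronger hypothesis than the stated one: the proposition assumes only $\delta^{(k)}\simeq x^{s}$, and for $k=2$ (which is exactly the first case in which the proposition is used here, $G=G_A$) the strong Dehn function can be far larger than $\delta^{(2)}$, by Young \cite{young}. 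So the upper bound cannot be extracted from the hypotheses as you claim; in this paper it is supplied externally --- Section \ref{uppersec} bounds $\Delta^{(2)}_{G_A}$ directly, and the induction in Section \ref{highersect} carries the strong-Dehn-function upper bound along as statement (1), which is what legitimizes the ``embedded representatives'' conclusion at each stage. To make your write-up self-contained you should either add $\Delta^{(k)}_X(x)\preceq x^{s}$ as a hypothesis (harmless for $k=1$ and $k\geq 3$, where $\Delta^{(k)}$ is the superadditive closure of $\delta^{(k)}$), or phrase the final conclusion conditionally: the balls $P_i$ are embedded representatives whenever $\delta^{(k+1)}_{G\times\Z}\simeq x^{2-1/s}$, which is how the proposition is actually deployed.
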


The proof of Theorem \ref{mainthm2} now proceeds exactly as in Theorem D
of \cite{bbfs}. Let $\alpha = 2 + \log_{\lambda}(\mu)$ and $s(i) =
\frac{(i + 1)\alpha - i}{i \alpha - (i -1)}$. 
We verify by induction on $i$ the following statements for $G_{\Sigma^i A}$: 
\begin{enumerate}
\item \label{st1} $\Delta^{(i+2)}(x) \leq Cx^{s(i)}$ for some constant $C
> 0$, 
\item \label{st2} $\delta^{(i+2)}(x) \succeq x^{s(i)}$, and 
\item \label{st3} $\delta^{(i+2)}(x)$ has embedded representatives. 
\end{enumerate}
The first two statements together yield the desired conclusion
$\delta^{(i+2)}(x) \simeq x^{s(i)}$. 

If $i = 0$ then \eqref{st1} and \eqref{st2} are the respective
conclusions of Sections \ref{uppersec} and \ref{lowersec}, and
\eqref{st3} holds as remarked above. 
For $ i  > 0$ note first that $s( i ) = 2 - 1/s( i  -1)$. Then
statement~\eqref{st1} holds by Theorem~\ref{product} and 
property \eqref{st1} of $G_{\Sigma^{i-1} A}$. Proposition~\ref{lowerGxZ}
implies \eqref{st2} and~\eqref{st3} by
properties~\eqref{st1}--\eqref{st3} of $G_{\Sigma^{i-1} A}$.

\section{Density of exponents}\label{densitysect}

In this section, $A$ is a $2\times 2$ matrix with integer entries. 
Denote the trace and determinant of $A$ by $t$ and $d$ respectively. Note
that the characteristic polynomial of $A$ is given by $p(x) = x^{2} -{t}
x + {d}$, and the eigenvalues are $\lambda \; = \; \frac{{t} +
\sqrt{{t}^{2} - 4{d}}}{2}$ and $\mu \; = \; \frac{{t} - \sqrt{{t}^{2} -
4{d}}}{2}$. The next lemma shows that under certain conditions, the
leading eigenvalue can be roughly approximated by the trace. 

\begin{lemma} 
\label{lambdatau} 
If ${t} \geq 4$ and $t \geq d \geq 0$ then $\lambda, \mu \in \R$ and \/
${t} -4 \; \leq \; \lambda \; \leq \; {t}$. 
\end{lemma}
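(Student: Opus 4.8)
The plan is to use nothing beyond the explicit radical formulas for $\la$ and $\mu$ together with the elementary Vieta relations $\la + \mu = {t}$ and $\la \mu = {d}$ attached to $p(x) = x^{2} - {t}x + {d}$.

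First I would check reality of the eigenvalues. Since ${d} \leq {t}$ and ${t} \geq 4 > 0$, the discriminant satisfies ${t}^{2} - 4{d} \geq {t}^{2} - 4{t} = {t}({t}-4) \geq 0$, so $\sqrt{{t}^{2} - 4{d}}$ is a nonnegative real number and hence $\la, \mu \in \R$. The upper bound on $\la$ is then immediate: because ${d} \geq 0$ we have $\sqrt{{t}^{2} - 4{d}} \leq \sqrt{{t}^{2}} = {t}$ (using ${t} > 0$), so $\la = \frac{{t} + \sqrt{{t}^{2} - 4{d}}}{2} \leq \frac{{t} + {t}}{2} = {t}$. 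The same formula also gives the crude bound $\la \geq {t}/2 \geq 2$, since the radical is nonnegative.

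For the lower bound I would avoid estimating the radical directly and argue instead through the product relation. From $\la\mu = {d}$ and $\la \geq {t}/2 > 0$ one gets $\mu = {d}/\la$, and then $0 \leq \mu = {d}/\la \leq {t}/\la \leq {t}/({t}/2) = 2$, using $0 \leq {d} \leq {t}$. Finally $\la + \mu = {t}$ yields $\la = {t} - \mu \geq {t} - 2 \geq {t} - 4$, which is in fact slightly stronger than the asserted bound.

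I do not expect any real obstacle here. The only point requiring a moment's care is that the alternative approach --- bounding $\sqrt{{t}^{2}-4{d}} \geq \sqrt{{t}^{2}-4{t}}$ directly and then solving for $\la$ --- forces a small case split according to whether ${t} \leq 8$; routing the argument through $\la\mu = {d}$ sidesteps this and keeps everything clean.
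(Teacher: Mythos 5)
Your proof is correct, and for the key lower bound it takes a genuinely different route from the paper. The reality of the eigenvalues and the upper bound $\la \leq t$ are handled the same way in both arguments (discriminant $\geq t(t-4) \geq 0$, and $\sqrt{t^2-4d} \leq t$). For the lower bound, however, the paper works directly with the radical: it observes that $\sqrt{t^2-4t}$ is the geometric mean of $t-4$ and $t$, hence lies between them, so $t-4 \leq \sqrt{t^2-4t} \leq \sqrt{t^2-4d}$ and therefore $t-4 \leq \la$. You instead route the estimate through Vieta's relations: from $\la \geq t/2$ and $\la\mu = d \leq t$ you get $0 \leq \mu \leq 2$, and then $\la = t - \mu \geq t-2$. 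Your argument avoids any manipulation of the square root beyond nonnegativity and in fact yields the slightly sharper bound $\la \geq t-2$ (the paper only needs $t-4$, and its geometric-mean trick is what it uses in place of your product-of-roots step); the paper's version has the minor advantage of also recording the inequality $\sqrt{t^2-4d} \leq \la$ explicitly, though nothing later depends on it. Your closing remark about a case split at $t \leq 8$ concerns only a third, unused variant and does not affect the argument.
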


\begin{proof}
First, $t \geq 4$ and $t \geq d$ imply that $t^2 \geq 4d$, and therefore
$\lambda, \mu \in \R$. Next, $\lambda$ is the average of $t$ and
$\sqrt{t^2 - 4d}$, and so $\sqrt{t^2 - 4d} \leq \lambda \leq t$. It
remains to show that $t-4 \leq \sqrt{t^2 - 4d}$. Note that
$\sqrt{t^2 - 4t}$ is the geometric mean of $t-4$ and $t$, and so it
lies between $t-4$ and $t$. Since $t \geq d$, we now have $t-4 \leq
\sqrt{t^2 - 4t} \leq \sqrt{t^2 - 4d}$, as needed. 
\end{proof}

\begin{lemma}\label{densitylemma} 
The function $f(x,y) = \log_x(y)$ maps the set 
\[ S \ = \ \{ \, (t,d) \in \N \times \N \mid 2 \leq d \leq t-4 \, \}\] 
onto a dense subset of \/ $(0,1)$. 
\end{lemma}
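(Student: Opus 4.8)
The plan is to show that $f(S)$ is a subset of $(0,1)$ which is dense in it, the density coming from approximating an arbitrary target $c\in(0,1)$ by pairs of the shape $(t,\lceil t^{c}\rceil)$ with $t$ large. First I would record that $f(S)\subseteq(0,1)$: if $(t,d)\in S$ then $2\le d\le t-4$, so $1<d<t$ and $t\ge 6$, whence $0<\log_{t}(d)<1$.

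For the density, fix $c\in(0,1)$ and $\epsilon>0$. For each integer $t\ge 2$ let $d_{t}=\lceil t^{c}\rceil$, the least integer that is $\ge t^{c}$. I would first check that $(t,d_{t})\in S$ for all sufficiently large $t$. Since $c>0$ we have $t^{c}>1$, hence $d_{t}\ge 2$. Since $c<1$ we have $t-t^{c}=t\bigl(1-t^{c-1}\bigr)\to\infty$ as $t\to\infty$, so there is $t_{0}=t_{0}(c)$ such that $t^{c}\le t-5$, and therefore $d_{t}\le t-4$, whenever $t\ge t_{0}$. Thus $(t,d_{t})\in S$ for all integers $t\ge t_{0}$.

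Next I would estimate $f(t,d_{t})=\log_{t}(d_{t})$. From $t^{c}\le d_{t}<t^{c}+1$ one obtains
\[ c \ \le \ \log_{t}(d_{t}) \ < \ c+\frac{\ln\bigl(1+t^{-c}\bigr)}{\ln t} \ \le \ c+\frac{t^{-c}}{\ln t}, \]
using $\ln(1+u)\le u$ for $u\ge 0$. Because $c>0$, the error term $t^{-c}/\ln t$ tends to $0$ as $t\to\infty$, so for all sufficiently large $t$ (in particular with $t\ge t_{0}$) we have $\abs{\log_{t}(d_{t})-c}<\epsilon$. This produces a point of $f(S)$ within $\epsilon$ of $c$; since $c\in(0,1)$ and $\epsilon>0$ were arbitrary, $f(S)$ is a dense subset of $(0,1)$, which is the assertion of the lemma.

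There is no genuine obstacle here; the only thing requiring care is to satisfy the two membership constraints $d_{t}\ge 2$ and $d_{t}\le t-4$ simultaneously with the approximation bound. The essential points are that the strict inequality $c<1$ is exactly what forces $t^{c}$ below $t-4$ for large $t$, while $c>0$ is what makes the rounding error $t^{-c}/\ln t$ vanish.
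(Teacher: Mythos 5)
Your proof is correct: the membership checks ($d_t\ge 2$ from $t^c>1$, and $d_t\le t-4$ for large $t$ from $c<1$) and the rounding estimate $c\le\log_t(d_t)<c+t^{-c}/\ln t$ are all valid, and together they do show that $f(S)$ is a dense subset of $(0,1)$. However, your route differs from the paper's. You approximate each fixed target $c\in(0,1)$ individually, taking $d=\lceil t^c\rceil$ and letting $t\to\infty$, so the key estimate is the vanishing of the rounding error $t^{-c}/\ln t$. The paper instead fixes a single large base $t>e^{2/\epsilon}$ and sweeps $d$ over all of $2,3,\dots,t-4$: since $\partial f/\partial y=1/(y\ln t)<\epsilon/4$ for $y\ge 2$, consecutive values $\log_t(d)$ and $\log_t(d+1)$ differ by less than $\epsilon/4$, while the extreme values cover an interval containing $[\epsilon,1]$, so the whole family $\{(t,d)\}_{d=2}^{t-4}$ has $\epsilon$-dense image for that one $t$. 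Your argument is arguably more direct for hitting a single prescribed exponent (which is all the application in Proposition \ref{densityprop} ultimately needs, and your construction automatically allows $t$ arbitrarily large, which is convenient there); the paper's mesh argument buys a uniform statement -- an $\epsilon$-dense set of exponents realized with a common trace $t$ -- at the cost of tracking the endpoint behavior and discarding the last few points. Both are elementary and complete.
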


\begin{proof}
Given $\epsilon > 0$, fix an integer $t > e^{2/\epsilon}$. We will
show that the points $(t,2)$, $(t,3)$, \dots, $(t,t-4)$ map to an
$\epsilon$-dense subset of $(0,1)$. 

Fixing $x = t$, the function $f(t, \, \cdot \, )$ maps $[1,t]$
homeomorphically onto $[0,1]$, and maps $[2,t]$ onto an interval
containing $[\epsilon, 1]$, by the choice of $t$. Since $f_y =
\frac{1}{y \ln(x)}$, we have $\abs{f_y(t,y)} \leq \frac{1}{2 \ln(t)} <
\epsilon/4$ for all $y \geq 2$, again by the choice of $t$. Therefore 
\[ \abs{f(t,d) - f(t,d+1)} \ < \ \epsilon/4\]
for all integers $d \geq 2$. Thus the image of the set $\{(t,2)$,
$(t,3)$, \dots, $(t,t)\}$ is $\epsilon/4$-dense in (and includes the
endpoints of) an interval
containing $[\epsilon,1]$. Omitting the last four points, the remaining
set is $\epsilon$-dense in $(0,1)$. 
\end{proof}

Now we can prove the main result of this section. 
\begin{proposition}[Density]\label{densityprop}
Given $\alpha \in (1,2)$ and $\epsilon > 0$, there is a matrix $A \in
M_{2}(\Z)$ with determinant ${d} \geq 2$ and eigenvalues $\lambda$, $\mu$
with $\lambda > 1 > \mu$ such that \/ $\abs{\bigl(2 +
\log_{\lambda}({\mu})\bigr) -\alpha} \ < \ \epsilon$. 
\end{proposition}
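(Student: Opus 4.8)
The plan is to reduce the statement to Lemmas~\ref{lambdatau} and~\ref{densitylemma} via the relation $\lambda\mu = d$. Since $\mu = d/\lambda$, we have $\log_\lambda(\mu) = \log_\lambda(d) - 1$, so $2 + \log_\lambda(\mu) = 1 + \log_\lambda(d)$. Thus it suffices to produce $A \in M_2(\Z)$ with determinant $d \geq 2$ and real eigenvalues $\lambda > 1 > \mu$ for which $\abs{\log_\lambda(d) - (\alpha - 1)} < \epsilon$; note $\alpha - 1 \in (0,1)$, exactly the interval in Lemma~\ref{densitylemma}.

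The matrices will be companion matrices: for integers $t > d > 0$, let $A$ be the companion matrix of $x^2 - tx + d$, so $A$ has integer entries, trace $t$, and determinant $d$. First I would apply Lemma~\ref{densitylemma} with parameter $\epsilon/2$; its proof shows that for all sufficiently large $t$ the set $\{\log_t(d) : d = 2, \ldots, t-4\}$ is $(\epsilon/2)$-dense in $(0,1)$, so there is $d \in \{2, \ldots, t-4\}$ with $\abs{\log_t(d) - (\alpha - 1)} < \epsilon/2$. Choosing $t$ large ($t \geq 6$, say) also places us in the hypotheses of Lemma~\ref{lambdatau}, whence the eigenvalues are real and $t - 4 \leq \lambda \leq t$; moreover the eigenvalue formula $\lambda = \tfrac12(t+\sqrt{t^2-4d})$ together with $d \leq t - 4$ gives $\lambda > t - 1 > d \geq 2$, so that $\lambda > 1$ and $\mu = d/\lambda < 1$.

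It remains only to compare $\log_\lambda(d)$ with $\log_t(d)$ and absorb the discrepancy into the remaining half of $\epsilon$. Writing $\log_\lambda(d) - \log_t(d) = \ln d \cdot (\ln t - \ln\lambda)/(\ln\lambda \cdot \ln t)$ and using $0 \leq \ln t - \ln\lambda \leq \ln\bigl(\frac{t}{t-4}\bigr)$, $\ln\lambda \geq \ln(t-4)$, and $\ln d \leq \ln t$, one gets $\abs{\log_\lambda(d) - \log_t(d)} \leq \ln\bigl(\frac{t}{t-4}\bigr)/\ln(t-4)$, which tends to $0$ as $t \to \infty$. Enlarging $t$ once more if necessary so that this bound is $< \epsilon/2$, the triangle inequality gives $\abs{(2 + \log_\lambda(\mu)) - \alpha} = \abs{\log_\lambda(d) - (\alpha - 1)} < \epsilon$, with $\det A = d \geq 2$, as desired.

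All the estimates here are elementary; the only delicate point is to pick $t$ large enough to serve two purposes at once — large enough for Lemma~\ref{densitylemma} to furnish an approximating determinant $d$, and large enough that $\lambda$ is close enough to $t$ that trading $\log_t$ for $\log_\lambda$ shifts the exponent by less than $\epsilon/2$. Both conditions hold for all sufficiently large $t$, so there is no real obstruction.
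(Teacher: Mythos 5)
Your argument is correct and follows essentially the same route as the paper: the same companion matrices $A(t,d)$, the same reduction $2+\log_\lambda(\mu)=1+\log_\lambda(d)$, and the same combination of Lemmas \ref{lambdatau} and \ref{densitylemma} with $t$ chosen large enough to serve both purposes. The only (cosmetic) difference is that you bound $\abs{\log_\lambda(d)-\log_t(d)}$ by a direct algebraic estimate, $\ln\bigl(\tfrac{t}{t-4}\bigr)/\ln(t-4)$, whereas the paper bounds the same quantity via the partial derivative of $\log_x(y)$ along the segment from $t-4$ to $t$.
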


\begin{proof}
Given integers $t$ and $d$, the matrix
\[ A(t,d) \ = \ \begin{pmatrix} { \ t }  & - {d} \ \\
\ 1 & 0 \end{pmatrix} \ \in \ M_2(\Z)\]
has trace $t$ and determinant $d$ (and eigenvalues $\lambda, \mu$). 
Note also that $\lambda \mu = d$ implies that $2 + \log_{\lambda}(\mu) =
1 + \log_{\lambda}(d)$. Thus we need to choose $t$ and $d$ so that
$\log_{\lambda}(d)$ is within $\epsilon$ of $\alpha - 1$. 

First, choose a number $T$ such that 
\begin{equation}\label{Tbound} 
\frac{4}{(t-4)\ln(t-4)} \ \leq \ \epsilon/2
\end{equation}
for all $t \geq T$. 

Next, apply Lemma \ref{densitylemma} to obtain $t$ and $d$ such that
$\abs{\log_{t}(d) - (\alpha - 1)} < \epsilon/2$ and $2 \leq d \leq
t-4$. We may assume in addition that $t \geq T$, since only finitely many
points of $S$ violate this condition, and omitting these from $S$ does
not affect the conclusion of the lemma. By Lemma \ref{lambdatau} we have 
\begin{equation}\label{big-ineq}
2 \ \leq \ d \ \leq \ t-4 \ \leq \ \lambda \ \leq \ t.
\end{equation}

Note that $f(x,y) = \log_x(y)$ has partial derivative 
$f_x = \frac{-\ln(y)}{x \ln(x) \ln(x)}$. Along the segment $\{(x,y) \mid t-4
\leq x \leq t, \ y=d\}$ we have 
\[ \abs{f_x} \ \leq \ \frac{\ln(d)}{(t-4)\ln(t-4) \ln(t-4)} \ \leq \
\frac{1}{(t-4)\ln(t-4)}.\]
This implies (with \eqref{Tbound}) that 
\[ \abs{\log_{t-4}(d) - \log_t(d)} \ \leq \ \frac{4}{(t-4)\ln(t-4)} \
\leq \ \epsilon/2.\] 
Now, since $\lambda$ is between $t-4$ and $t$, we have 
\[ \abs{\log_{\lambda}(d) - \log_t(d)} \ \leq \ \epsilon/2,\] 
and hence $\log_{\lambda}(d)$ is within $\epsilon$ of $\alpha - 1$. 

Lastly, the inequality $\mu < 1$ reduces to $d < t-1$, which holds by
\eqref{big-ineq}. The inequality $\lambda > 1$ is clear since $t \geq 2$. 
\end{proof}


\def\cprime{$'$}

\end{document}